\documentclass[10pt]{amsart}

\usepackage{amsmath, amscd, amssymb, euler}
\usepackage[frame,cmtip,arrow,matrix,line,graph,curve]{xy}
\usepackage{graphpap, color,bbm, verbatim}
\usepackage[mathscr]{eucal}
\usepackage[normalem]{ulem}

\numberwithin{equation}{section}

\newcommand{\CC}{\mathbb{C}}

\newcommand{\PP}{\mathbb{P}}
\newcommand{\QQ}{\mathbb{Q}}

\newcommand{\ZZ}{\mathbb{Z}}

%% mathbf

\newcommand{\bB}{\mathbf{B}}

\newcommand{\bS}{\mathbf{S}}
\newcommand{\bM}{\mathbf{M}}
\newcommand{\bH}{\mathbf{H}}

\newcommand{\bR}{\mathbf{R}}

%% mathcal

\newcommand{\cal}{\mathcal}

\def\cA{{\cal A}}

\def\cC{{\cal C}}

\def\cE{{\cal E}}
\def\cF{{\cal F}}

\def\cM{{\cal M}}

\def\cO{{\cal O}}

\def\cS{{\cal S}}

\def\cU{{\cal U}}

\def\cW{{\cal W}}

%% mathfrak

%% tilde, Greek

%% tilde, English

%% hat

%% check

%% moduli

\def\mapright#1{\,\smash{\mathop{\lra}\limits^{#1}}\,}

%\def\Mw{\cM_{\chi,\vd,\vmu}^\bu(W\urel,L)}

%% vector

%% vec

%% superscript

%% subscript

\def\lra{\longrightarrow}

%% Greek

%% abbreviation

\newcommand{\ses}[3]{0\lr{#1}\lr{#2}\lr{#3}\lr 0}

\def\and{\quad{\rm and}\quad}

\def\and{\quad\text{and}\quad}
\def\mapright#1{\,\smash{\mathop{\lra}\limits^{#1}}\,}

\def\PP{\mathbb{P}}
\def\CC{\mathbb{C}}

\def\lra{\longrightarrow}
\def\mapright#1{\,\smash{\mathop{\lra}\limits^{#1}}\,}
\def\cO{\mathcal{O}}

\def\git{/\!/ }

\def\lr{\longrightarrow}

\input xy
\xyoption{all}
%% operator

\DeclareMathOperator{\Ext}{Ext} 
\DeclareMathOperator{\Hom}{Hom}

 \DeclareMathOperator{\spec}{Spec}
\DeclareMathOperator{\Sym}{Sym} 

%\let\lab=\label

%% newtheorem

\newtheorem{prop}{Proposition}[section]
\newtheorem{theo}[prop]{Theorem}
\newtheorem{lemm}[prop]{Lemma}
\newtheorem{coro}[prop]{Corollary}
\newtheorem{rema}[prop]{Remark}
\newtheorem{defi}[prop]{Definition}

\newtheorem{exam}[prop]{Example}

\newtheorem{defiprop}[prop]{Definition-Proposition}

\title[Compactified moduli of curves]{Compactified moduli
spaces of rational curves in projective homogeneous varieties}

\author{Kiryong Chung}
\address{Department of Mathematics, Seoul National University, Seoul 151-747, Korea}
\email{dragon10@snu.ac.kr}

\author{Jaehyun Hong}
\address{Department of Mathematics and Research Institute
of Mathematics, Seoul National University, Seoul 151-747, Korea}
\email{jhhong00@snu.ac.kr}

\author{Young-Hoon Kiem}
\address{Department of Mathematics and Research Institute
of Mathematics, Seoul National University, Seoul 151-747, Korea}
\email{kiem@math.snu.ac.kr}
%\keywords{compactified moduli space of rational curves, projective homogeneous variety, blow-up/down morphisms, Betti's numbers}
%\subjclass{Primary 14E30, 14D22, 14F45.}
\thanks{Partially supported by NRF.}

\begin{document}
\begin{abstract}
The space of smooth rational curves of degree $d$ in a projective variety $X$ has compactifications by taking closures in the Hilbert scheme, the moduli space of stable sheaves or the moduli space of stable maps respectively. In this paper we compare these compactifications by explicit blow-ups and -downs when $X$ is a projective homogeneous variety and $d\leq 3$. Using the comparison result, we calculate the Betti numbers of the compactifications when $X$ is a Grassmannian variety.
\end{abstract}
\maketitle
%%%%%%%%%%%%%%%%%%%%%%%%%%%%%%%%%%%%%%%%%%%%%%%%%%%%%%%%%%%%%%%%%%%%%%%%%%%%%%%%%%%%%%%%%%%%%%%%%%%%%
%                                               Introduction                                        %
%                                                                                                   %
%%%%%%%%%%%%%%%%%%%%%%%%%%%%%%%%%%%%%%%%%%%%%%%%%%%%%%%%%%%%%%%%%%%%%%%%%%%%%%%%%%%%%%%%%%%%%%%%%%%%%
\section{Introduction}
Let $X$ be a smooth projective variety over $\CC$
with fixed embedding $i:X \hookrightarrow \PP^r$.
Let $\bR (X,d)$ be the moduli space of all smooth rational curves of
degree $d$ in $X$.
It is well known that $\bR (X,d)$ is smooth when $X$ is a convex variety in the sense that $H^1(\PP^1,f^*T_X)=0$ for any morphism $f:\PP^1\to
X$ of degree $d$.
However, even for projective spaces, when $d\geq 2$, $\bR (X,d)$ is not compact.
From moduli theoretic point of view, the following questions are quite natural:
\begin{enumerate}
\item Does $\bR (X,d)$ admit a moduli theoretic compactification?
\item If there are more than one such compactifications,
what are the relationships among them?
\item Can we calculate topological invariants of the compactifications,
such as the Betti numbers and intersection numbers?\end{enumerate}

As we will see below, there are several well-known compactifications
of $\bR (X,d)$ by Hilbert scheme, the moduli space of semistable sheaves
and the moduli space of stable maps. The purpose of this paper is to provide
answers to the second and third questions when the target $X$ is a homogeneous
projective variety and $d$ is at most 3.

Let us recall several important compactifications of $\bR (X,d)$.

\medskip
\noindent$\bullet$ \textbf{Hilbert compactificaiton}: Since $X\subset \PP^r$ is a projective variety, Grothendieck's general
construction gives us the Hilbert scheme $Hilb^{dm+1}(X)$ of
closed subschemes of $X$ with Hilbert polynomial $h(m)=dm+1$ as a
closed subscheme of $Hilb^{dm+1}(\PP^r)$. The inclusion
$\bR (X,d)\subset Hilb^{dm+1}(X)$ is an open immersion and thus the
irreducible component(s) of $Hilb^{dm+1}(X)$ containing smooth
rational curves is a compactification which we call the
\emph{Hilbert compactificaiton} and denote by $\bH (X,d)$.

\medskip
\noindent $\bullet$ \textbf{Kontsevich
compactification}: In 1994, Kontsevich and Manin proposed another way to
compactify $\bR (X,d)$ by using the notion of stable maps. A
stable map is a morphism of a connected nodal curve $f:C\to X$
with finite automorphism group. Recall that two maps $f:C\to X$
and $f':C'\to X$ are isomorphic if there exists an isomorphism
$\eta:C\to C'$ satisfying $f'\circ \eta=f$. Let $\cM_0(X,d)$
denote the (coarse) moduli space of isomorphism classes of stable
maps $f:C\to X$ with arithmetic genus of $C$ equal to $0$ and
$\mathrm{deg}(f^*\cO_X(1))=d$. The obvious inclusion
$\bR (X,d)\to \cM_0(X,d)$ is an open immersion and hence the
closure $\bM (X,d)$ of $\bR (X,d)$ in $\cM_0(X,d)$ is a
compactification, which we call the \emph{Kontsevich
compactification}.

\medskip
\noindent $\bullet$ \textbf{Simpson compactification}: Yet another natural compactification is obtained by using C.
Simpson's general construction of moduli spaces of semistable sheaves
on a projective variety $X\subset \PP^r$. A coherent sheaf $E$ on
$X$ is \emph{pure} if any nonzero subsheaf of $E$ has the same
dimensional support as $E$. A pure sheaf $E$ is called
\emph{semistable} if \[ \frac{\chi(E(m))}{r(E)}\le
\frac{\chi(E''(m))}{r(E'')}\qquad \text{for }m>\!>0
\]
for any nontrivial pure quotient sheaf $E''$ of the same
dimension, where $r(E)$ denotes the leading coefficient of the
Hilbert polynomial $\chi(E(m))=\chi(E\otimes \cO_X(m))$. We obtain
\emph{stability} if $\le $ is replaced by $<$. If we replace the
quotient sheaves $E''$ by subsheaves $E'$ and reverse the
inequality, we obtain an equivalent definition of (semi)stability.

Simpson proved that there is a projective moduli scheme $\cS
imp^{P}(X)$ of semistable sheaves of given Hilbert polynomial $P$.
If $C$ is a smooth rational curve in $X$, then the structure sheaf
$\cO_C$ is a stable sheaf on $X$. Hence we get an open immersion
$\bR (X,d)\hookrightarrow \cS imp^{dm+1}(X)$. By taking the
closure we obtain a compactifiction $\bS (X,d)$, which we call the
\emph{Simpson compactification}.

\medskip
\begin{rema}\label{rem1.1}
When $X\neq \PP^r$, $\bM (X,d)$ may have many components:
The embedding $i:X \hookrightarrow \PP^r$ induces a homomorphism
$ i_*: H_2(X,\ZZ) \rightarrow H_2(\PP^r,\ZZ)\cong \ZZ$ and
$\bM (X,d)$ is decomposed as
\[\coprod_{\beta\in H_2(X,\ZZ),i_*(\beta)=d }\bM (X,\beta).\]
If $X$ is a projective homogeneous variety, then each $\bM (X,\beta)$ is an irreducible variety \cite{Kim-Pand}.
Similarly, $\bS (X,d)$ and $\bH (X,d)$ may be disjoint unions of components $\bS (X,\beta)$
and $\bH (X,\beta)$ respectively. Note that, by definition, $\bM(X,\beta)$, $\bS(X,\beta)$ and $\bH(X,\beta)$ are birational and thus they are all irreducible.
\end{rema}

We will often write $\bM$ or $\bM(X)$ (resp. $\bS$ or $\bS(X)$,
resp. $\bH$ or $\bH(X)$) instead of $\bM (X,d)$ (resp. $\bS (X,d)$,
resp. $\bH (X,d)$) when the meaning is clear from the context.
Now we can phrase more precisely the problem of interest in this paper as follows.

\bigskip

\noindent \textbf{Problem:} Compare the compactifications $\bH$, $\bM$ and $\bS$
explicitly and calculate the Betti numbers of them.

\bigskip
%%%
In \cite{Kiem,CK}, the authors solved this problem for $X=\PP^r$ and $d =2, 3$. When $d=1$, all the compactifications coincide with the Grassmannian $Gr(2,r+1)$. When $d=2$, we proved the following.

\begin{theo}\label{th1.1}\cite[\S4]{Kiem} Let $r\ge 2$. \begin{enumerate}
\item $\bS (\PP^r,2)\cong \bH (\PP^r,2)$.
\item The blow-up of $\bM(\PP^r,2)$ along
the locus of stable maps with linear image
coincides with the smooth blow-up of $\bS (\PP^r,2)$ along the locus of stable sheaves with linear support.
\end{enumerate}
\end{theo}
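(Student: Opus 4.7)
The plan for part (1) is to construct a natural morphism $\bH(\PP^r,2)\to \bS(\PP^r,2)$ sending a subscheme $C$ to its structure sheaf $\cO_C$, and then show that it is an isomorphism. The first step is to verify that each $\cO_C$ with $C$ in $\bH(\PP^r,2)$ is Simpson-stable: purity follows because every member of $\bH(\PP^r,2)$ is a flat limit of smooth conics and hence a locally Cohen--Macaulay subscheme of pure dimension $1$, while slope stability is checked by bounding the Hilbert polynomial of any pure quotient $\cO_C\twoheadrightarrow Q$, whose support can only be a reduced line, and by a direct calculation the resulting slope inequality is strict. The morphism is then induced by the universal subscheme on $\bH(\PP^r,2)\times \PP^r$ via functoriality of Simpson's construction. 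For the inverse, one classifies pure sheaves $E$ on $\PP^r$ with $\chi(E(m))=2m+1$ through their Fitting support $C_E\subset \PP^r$: a slope computation rules out rank-$2$ stable torsion-free sheaves on a reduced line (any rank-$2$ torsion-free sheaf on $\PP^1$ with $\chi=1$ has a destabilizing line sub-bundle), and a local analysis on non-reduced $C_E$ identifies $E$ with $\cO_{C_E}$. The main obstacle here is exactly this classification on thickened supports, where torsion-free rank-$1$ sheaves on a ribbon can a priori fail to be invertible and need to be ruled out individually.

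For part (2), the plan is to produce an explicit common blow-up by identifying the two exceptional loci with the same projective bundle. First, describe the blow-up centers in modular terms: the locus $B_M\subset \bM(\PP^r,2)$ of stable maps with linear image consists of admissible degree-$2$ covers $f\mh C\to \ell$ of a line together with their branch data, and it forms a projective bundle over the Grassmannian $Gr(2,r+1)$ of lines; the locus $B_S\subset \bS(\PP^r,2)$ of stable sheaves with linear support parametrizes structure sheaves of double-line (ribbon) subschemes of $\PP^r$, which is again a projective bundle over $Gr(2,r+1)$, this time recording the infinitesimal thickening. Second, write down the natural birational map $\bM(\PP^r,2)\dashrightarrow \bS(\PP^r,2)$ extending the identity on $\bR(\PP^r,2)$: for a stable map with nondegenerate image, send it to the structure sheaf of its plane-conic image. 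Show that the indeterminacy loci of this map and of its inverse are precisely $B_M$ and $B_S$ respectively. Finally, blow up both sides and verify that the exceptional divisors carry the same universal family, so the two blow-ups are canonically isomorphic over the common birational model.

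The main technical obstacle will be to match the two projective bundles $B_M\to Gr(2,r+1)$ and $B_S\to Gr(2,r+1)$ naturally, by showing that each fibre is the same space of ``tangent decorations'' along the given line (a choice of branch divisor on one side, a choice of normal direction for the ribbon structure on the other), and then to check that the resulting blow-up is smooth. Smoothness on the $\bS$ side follows if $B_S$ is a smooth subvariety of $\bS(\PP^r,2)$ with the expected normal bundle, which requires a local deformation-theory computation with $\Ext^1(\cO_D,\cO_D)$ for double lines $D$; on the $\bM$ side it requires an analogous computation with the obstruction spaces of the stable maps in $B_M$. Once these normal-bundle data are identified with the same bundle on the common base, the universal property of blow-ups produces the desired isomorphism.
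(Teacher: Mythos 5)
Part (1) of your plan is essentially the paper's: the entire content is that the structure sheaf of every conic, including a double line (which is a non-split extension of $\cO_L$ by $\cO_L(-1)$), is stable, and conversely that every stable sheaf with Hilbert polynomial $2m+1$ is such a structure sheaf; your worry about non-invertible rank-one sheaves on ribbons is resolved by exactly this extension description, so this half is fine and follows the same route.

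Part (2) contains a genuine gap. First, the two blow-up centers are \emph{not} the same projective bundle over $Gr(2,r+1)$ and cannot be matched fibrewise: the locus $B_M$ of stable maps with linear image is a $\PP^2$-bundle (a degree-two cover of a line $L$ is determined by its two branch points, $\Sym^2 L\cong\PP^2$), whereas the locus $B_S$ of stable sheaves with linear support is a $\PP^{r-2}$-bundle, the fibre being $\PP\Ext^1_{\PP^r}(\cO_L,\cO_L(-1))\cong \PP\bigl(H^0(N_{L/\PP^r}(-1))\bigr)\cong\PP^{r-2}$ parametrizing ribbon structures on $L$. So ``each fibre is the same space of tangent decorations'' is false; what is true is that the \emph{common exceptional divisor} of the two blow-ups is a $\PP^2\times\PP^{r-2}$-bundle over $Gr(2,r+1)$, which is a $\PP^{r-2}$-bundle over $B_M$ and a $\PP^2$-bundle over $B_S$. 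Second, and more seriously, matching exceptional divisors and normal-bundle data does not by itself identify two blow-ups of different birational models; you need an actual morphism between them, and your outline provides no mechanism to produce one. The paper (following Kiem) gets it by writing $\bM(\PP^r,2)$ as an $SL(2)$-quotient of a smooth variety $P_1$ coming from Kirwan's partial desingularization of $\PP(\Sym^2\CC^2\otimes\CC^{r+1})\git SL(2)$ --- note that $\bM$ itself has $\ZZ_2$-quotient singularities along $B_M$, which your proposed deformation-theory computation directly on $\bM$ would have to confront --- then blowing up the linear-image locus upstairs and applying elementary modification (Langton) to the pulled-back family $f_*\cO_C=\cO_L\oplus\cO_L(-1)$ along the exceptional divisor. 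The key computation is the Kodaira--Spencer/coboundary isomorphism $H^0(N_{L/\PP^r}(-1))\cong\Ext^1(\cO_L,\cO_L(-1))$, which shows the modified sheaves are precisely the non-split extensions, hence stable; this yields the morphism to $\bS(\PP^r,2)$, and the Fujiki--Nakano criterion (the exceptional divisor is a $\PP^2$-bundle over its image with normal bundle restricting to $\cO_{\PP^2}(-1)$ on fibres) identifies that morphism with the blow-up of $\bS$ along $B_S$. Your proposal is missing both this device for extending the rational map after blow-up and the correct description of the exceptional geometry.
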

The isomorphism (1) follows directly from the fact that
the structure sheaf of every conic in $\PP^r$ is a stable sheaf.
To prove (2), we first showed that $\bM (\PP^r,2)$ is in fact Kirwan's partial
desingularization of the GIT quotient $$\PP(\Sym^2(\CC^2)\otimes \CC^{r+1})\git SL(2)$$
where $SL(2)$ acts on $\Sym^2(\CC^2)$ in the standard fashion and trivially on $\CC^{r+1}$.
As a consequence $\bM (\PP^r,2)$ is an $SL(2)$-quotient of a smooth variety $P_1$,
which is the stable part of a smooth blow-up of the semistable part $P_0^{ss}$ of the projective space $P_0=\PP(\Sym^2(\CC^2)\otimes \CC^{r+1})$.
There is a family of stable maps parameterized by the stable part $P_1^s$.
We blow up $P_1^s$ along the locus of stable maps $f:C\to \PP^r$ with linear image and apply elementary modification to transform the direct image sheaves $f_*\cO_C$ into stable sheaves.
This gives us a morphism to $\bS (\PP^r,2)$.
By analyzing the normal bundle of the exceptional locus we could prove
that the induced morphism is in fact the blow-up above.

For $X=\PP^r$ ($r\ge 3$) and $d=3$, we proved in \cite{CK} that
$\bM(\PP^r,3),\bS(\PP^r,3)$ and $\bH(\PP^r,3)$
are related by explicit (weighted) blow-ups as follows.
\begin{theo}\label{th1.2} Let $r\ge 3$.
\begin{enumerate}
 \item $\bH(\PP^r,3)$ is the smooth blow-up of
  $\bS(\PP^r,3)$ along the locus $\Delta(\PP^r)$
of planar stable sheaves.
  \item  $\bS(\PP^r,3)$
  is obtained from $\bM(\PP^r,3)$ by three weighted blow-ups
  followed by three weighted blow-downs.
  (See \S\ref{sec1.1} for a more precise description.)
\end{enumerate}
\end{theo}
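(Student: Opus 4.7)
For part (1), the plan is to construct a natural morphism $\pi\mh \bH(\PP^r,3)\to\bS(\PP^r,3)$ sending a subscheme $C\subset\PP^r$ with Hilbert polynomial $3m+1$ to the maximal pure $1$-dimensional subsheaf of $\cO_C$ (i.e. strip off embedded and isolated points). That $\pi$ is a well-defined morphism of moduli spaces follows from a standard flatness argument applied to the universal family, stratum by stratum. On the open locus of smooth twisted cubics $\pi$ is the identity. Its exceptional set is exactly the locus of planar stable sheaves: a pure $1$-dimensional sheaf supported on a plane cubic $C\subset\Pi\cong\PP^2\subset\PP^r$ has Hilbert polynomial $3m$ rather than $3m+1$, so an element of $\bH$ lying above it must carry an extra length-$1$ embedded or isolated point. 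I would then verify that $\Delta(\PP^r)\subset\bS(\PP^r,3)$ is smooth by an explicit deformation-theoretic computation for pure sheaves on plane cubics, compute its normal bundle, and identify the fiber of $\pi$ over a point of $\Delta(\PP^r)$ with the projectivization of this normal bundle, via the identification of length-$1$ decorations with normal directions. The universal property of blow-ups along a smooth center then gives (1).

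For part (2), the plan is to factor the birational map $\bM(\PP^r,3)\dashrightarrow\bS(\PP^r,3)$ through a common smooth model obtained from $\bM$ by three successive weighted blow-ups. The three loci in $\bM$ to be blown up correspond to the three essentially distinct types of stable maps $f\mh C\to\PP^r$ whose pushforward $f_\ast\cO_C$ fails to be a stable sheaf and thus along which the rational map to $\bS$ is indeterminate: schematically, triple covers of a line, stable maps with a contracted or doubly-covered rational component, and maps whose gluing data force $f_\ast\cO_C$ to acquire torsion. With the weights chosen so that the proper transform of the universal family on the blown-up space admits a natural elementary modification into a family of pure stable sheaves, one obtains a morphism from the common model to $\bS(\PP^r,3)$ which contracts three exceptional divisors; these contractions are the claimed three weighted blow-downs.

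The principal obstacle is pinning down the correct weights in each weighted blow-up. This requires local étale models at each of the three strata, a careful analysis of the tangent-obstruction theory of stable maps there, and a description of how $f_\ast\cO_C$ is transformed along the exceptional divisor under the elementary modification. This is the degree-$3$ analogue of the normal-bundle computation used in Theorem~\ref{th1.1}, but with three strata that must be treated in a compatible order, and where the presence of nontrivial automorphism groups (double covers of lines, $S_3$-symmetry on three-line configurations, stabilizers of multiple-cover maps) forces the use of weighted rather than ordinary blow-ups. I expect this case analysis, together with the verification that the three flips can be performed in a consistent order, to occupy essentially all of the proof.
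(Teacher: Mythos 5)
Part (1): your map $\bH(\PP^r,3)\to\bS(\PP^r,3)$ is not well defined. A non-CM point of $\bH$ is a plane cubic $C_0$ together with an embedded or isolated point $p$; if $T\subset\cO_C$ denotes the maximal $0$-dimensional subsheaf (length $1$, supported at $p$), then any pure $1$-dimensional subsheaf $F\subset\cO_C$ satisfies $F\cap T=0$, hence injects into $\cO_C/T\cong\cO_{C_0}$ and has Hilbert polynomial at most $3m$. So the ``maximal pure subsheaf'' is not a point of the Simpson space with polynomial $3m+1$, and it forgets $p$ altogether, so it cannot exhibit $\Delta(\PP^r)$ as the image of an exceptional divisor. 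Your supporting claim that pure sheaves on plane cubics have polynomial $3m$ is also false: $\Delta(\PP^r)$ consists precisely of \emph{pure stable} planar sheaves with polynomial $3m+1$ (e.g.\ nonsplit extensions of $\CC_p$ by $\cO_{C_0}$), and such an extension is the actual flat limit of $\cO_{C_t}$ as twisted cubics degenerate to $C$. Moreover, taking pure parts stratum by stratum does not produce a flat family over $\bH$, so the universal property of the Simpson moduli space cannot be invoked for your assignment. The proof this paper relies on (from \cite{CK}) instead produces a flat family of \emph{stable} sheaves over $\bH$ (by modifying the universal family along the divisor of non-CM curves), gets a divisorial contraction $\bH(\PP^r)\to\bS(\PP^r)$, and proves it is a smooth blow-up by the Fujiki--Nakano criterion \cite{Fujiki} after identifying the exceptional projective bundle and its normal bundle; smoothness of $\bH(\PP^r,3)$ and of $\Delta(\PP^r)$ enters essentially.

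Part (2): your identification of the three centers is wrong, and this breaks the whole plan. By Lemma \ref{dirstab}, $f_*\cO_C$ is stable unless $f$ is a multiple cover; contracted components cause no instability and $f_*\cO_C$ never acquires torsion, so the indeterminacy locus of $\bM\dashrightarrow\bS$ has exactly \emph{two} components: triple covers of a line ($\Gamma^1_0$) and maps onto a union of two lines with one line doubled ($\Gamma^2_0$). The third blow-up center is not a stratum of $\bM$ at all: after blowing up $\Gamma^1_0$ and performing the elementary modification, the new sheaf at a point of the exceptional divisor is the extension of $\cO_L$ by $\cO_L(-1)^{\oplus2}$ given by a pair $(v,w)\in\Ext^1_{\PP^r}(\cO_L,\cO_L(-1))^{\oplus2}$, and it is unstable exactly when $v,w$ are proportional (then it is $F\oplus\cO_L(-1)$); that locus inside the exceptional divisor is the third center. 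Also, the ``weights'' are not tuned to make a modification work: the paper's route performs ordinary smooth blow-ups $SL(2)$-equivariantly on the smooth GIT model $Q_0(\PP^r)\subset\cM_0(\PP^1\times\PP^r,(1,3))$ of \cite{Parker,KM}, which descend to weighted blow-ups of $\bM$. Finally, the three blow-downs — which you essentially leave as an assertion — are obtained by identifying analytic neighborhoods of the exceptional loci with explicit variation-of-GIT models such as $\cO_{\PP^7\times\PP^{2t-1}}(-1,-1)\git_{\cO(1,\lambda)}SL(2)$ and tracking wall crossings in $\lambda$, after which the morphism to $\bS$ is shown to factor through the contracted model and to be bijective there. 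Without the correct centers and this local VGIT analysis, the proposed argument cannot be completed.
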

As an application of this theorem, we could calculate
all the Betti numbers of the compactificatoins.

The purpose of this paper is to generalize the above theorems for all projective homogeneous varieties.
The main theorem of this paper may be phrased as follows.
\begin{theo}\label{main-thm}
Theorems \ref{th1.1} and \ref{th1.2} hold for any projective homogeneous variety $X$ in $\PP^r$.
\end{theo}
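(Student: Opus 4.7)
The plan is to deduce the statement for a general projective homogeneous variety $X \subset \PP^r$ from the already-established cases in Theorems \ref{th1.1} and \ref{th1.2} for $\PP^r$, together with the functoriality of all three compactifications under the closed embedding $i\colon X \hookrightarrow \PP^r$. The key technical input is the standard \emph{blow-up restriction principle}: if $\cZ \hookrightarrow \cW$ is a closed embedding of smooth varieties and $\cB \subset \cW$ is a smooth center meeting $\cZ$ cleanly along the smooth subvariety $\cZ \cap \cB$, then the strict transform of $\cZ$ in $\mathrm{Bl}_\cB\cW$ is the blow-up of $\cZ$ along $\cZ \cap \cB$; the weighted version is analogous.

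First I would construct closed embeddings
$$\bH(X,d)\hookrightarrow \bH(\PP^r,d),\qquad \bM(X,d)\hookrightarrow \bM(\PP^r,d),\qquad \bS(X,d)\hookrightarrow \bS(\PP^r,d)$$
by sending a subscheme $C$ to its image in $\PP^r$, a stable map $f\colon C \to X$ to $i\circ f\colon C\to \PP^r$, and a pure sheaf $E$ to its push-forward $i_*E$, respectively. Each is cut out by the closed condition that the underlying support/image factors through $X$, and compatibility with the forgetful morphisms used in \cite{CK,Kiem} follows from the universal properties.

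Second, I would identify the blow-up centers intrinsic to $X$ as schematic restrictions of those for $\PP^r$. For $d=2$, conics in $X$ with linear support correspond to pairs (line $L\subset X$, length-$2$ subscheme of $L$) and hence form a smooth variety parametrized by the Fano variety of lines on $X$, which is smooth since $X$ is homogeneous. For $d=3$, the locus of planar stable sheaves $\Delta(X)$ and the loci of stable maps whose image lies in a line, a conic or a plane admit similar descriptions via Fano varieties of linear subspaces of $X$, which are smooth homogeneous varieties. Using that $X$ is convex (so $\bR(X,d)$ is smooth) and that $\bM(X,\beta)$ is irreducible by Kim--Pandharipande, a universal family dimension count shows that each intersection such as $\bS(X,3)\cap \Delta(\PP^r)$ is smooth of the expected dimension and meets its ambient center cleanly inside the corresponding $\PP^r$-moduli. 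Invoking the blow-up restriction principle then gives Theorem \ref{th1.1}(1), (2) and Theorem \ref{th1.2}(1) for $X$, as well as the three weighted blow-ups in Theorem \ref{th1.2}(2).

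The main obstacle is the subsequent three weighted blow-downs in Theorem \ref{th1.2}(2), since contractions do not restrict to closed subvarieties as automatically as blow-ups do. To handle this I would verify directly that each exceptional divisor contracted in the $\PP^r$-case is a weighted projective bundle whose restriction to the intermediate $X$-model is again a weighted projective bundle over the corresponding $X$-center. This reduces to identifying the normal bundles of the relevant homogeneous subvarieties inside the ambient blow-ups; the action of the automorphism group of $X$ on these centers makes the normal bundles split in the expected equivariant way, so the fibres of the $\PP^r$-contraction that meet the $X$-model are themselves weighted projective spaces of the correct weights. Granted this, the $\PP^r$-blow-down morphism restricts to a morphism on the $X$-model, and the universal property of weighted blow-downs identifies it with the desired contraction, completing the proof.
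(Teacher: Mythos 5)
Your overall strategy---push everything down from the $\PP^r$ case via the clean-intersection/proper-transform lemma---does appear in the paper, but only as an auxiliary tool (Lemma \ref{trans1}, Corollaries \ref{trans2}, \ref{trans3} and \ref{348}); it cannot carry the whole proof, and the places where you lean on it hardest are exactly where it breaks. The lemma you invoke requires the ambient variety and both subvarieties to be smooth. But $\bM(X,d)$ and $\bM(\PP^r,d)$ have finite quotient singularities precisely along the multiple-cover loci you want to blow up, so the restriction principle cannot be applied to them directly; the paper must work $SL(2)$-equivariantly on the smooth GIT models $P_1(X)\subset P_1(\PP^r)$ and $Q_i(X)\subset Q_i(\PP^r)$ (fiber products over $\bM(\PP^r,d)$, smooth by convexity of $\PP^1\times X$), and only there does it use clean intersections. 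Likewise, for Theorem \ref{th1.2}(1) you need $\bS(X,3)$ to be smooth before ``$\Delta(\PP^r)$ meets $\bS(X,3)$ cleanly'' even makes the proper-transform conclusion available; this smoothness is not known a priori and is obtained in the paper (Proposition \ref{306}) only as a by-product of the full blow-up/blow-down analysis of \S\ref{sec3.1}, not from a dimension count. You also do not address why the plane supporting a sheaf in $\Delta(X)$ lies entirely in $X$ and why $\Delta(X)$ is smooth; this is where Lemma \ref{assu5-1}(3),(4) (smoothness of $F_2(X)$, quadratic generation of $I_X$) enter via Lemma \ref{lem822-1} and Corollary \ref{348}.

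The more serious gap is the one you flag yourself: the three blow-downs in Theorem \ref{th1.2}(2) and the identification of the final model with $\bS(X,3)$. Restricting the $\PP^r$-contractions cannot be justified by an appeal to equivariant splitting of normal bundles: the centers and contracted directions on the $X$-side are genuinely different from the $\PP^r$-side (for instance $\Theta_0^2(X)$ is an $F_1(X,x)$-bundle rather than a $\PP^{r-1}$-bundle, and the relevant normal spaces are $\Ext^1_X(\cO_L,\cO_L(-1))^{\oplus 2}$, whose dimension $2t$ varies with the component $\beta$), so whether the restricted map is again a weighted blow-down with the stated center is precisely what has to be proved, not assumed. The paper proves it intrinsically: it builds the family of sheaves on $X$ by three elementary modifications, establishes stability through Kodaira--Spencer/coboundary computations to obtain the morphism $\bM_3(X)\to\bS(X)$ (Propositions \ref{pro31}, \ref{pro32}, \ref{108211}), and then identifies analytic neighborhoods of the exceptional loci as bundles over $F_1(X)$ with fiber the variation-of-GIT model $\cO_{\PP^7\times\PP^{2t-1}}(-1,-1)\git SL(2)$, from which the existence of the three contractions and the bijectivity (hence isomorphism) of $\bM_6(X)\to\bS(X)$ follow. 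Your proposal never constructs a family of stable sheaves on $X$ nor the morphism to $\bS(X)$, and without such local models it gives no argument that the contractions exist on the $X$-side; as written, its hardest steps rest on an unproved equivariance heuristic.
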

\begin{rema}
As we considered in Remark \ref{rem1.1}, when $H_2(X,\ZZ)\neq \ZZ$,
$\bM (X,d)$ may have disjoint irreducible components $\bM (X,\beta)$ and
so do $\bS (X,d)$, $\bH (X,d)$.
Hence Theorem \ref{main-thm} is really about the birational geometry of $\bM (X,\beta) , \bS (X,\beta)$ and $\bH (X,\beta)$.
\end{rema}
\begin{rema}
For the proof of Theorem \ref{main-thm}, we only use four properties of a projective homogeneous variety $X$
which will be stated in Lemma \ref{assu5-1}.
\end{rema}

As a direct application, we can calculate
all the Betti numbers of $\bH(X,d)$ and $\bS(X,d)$ for Grassmannians $X=Gr(k,n)$ and $d\le 3$.
See Corollaries \ref{thm41} and \ref{thm42} for precise closed formulas.
The Betti numbers of $\bM (Gr(k,n),d)$ for $d\le 3$ have been calculated by A. L\'opez-Mart\'in in \cite{Martin}.

%%%%%%%%%%%%%%%%%%%%%%%%%%%%%%%%%%%%%%%%%%%%%%%%%%%%%%%%%%%%%%%%%%%%%%%%%%%%%%%%%%%%%%%%%%%%%%%%

\subsection{Outline of this paper} \label{sec1.1}
In \S\ref{sec2} and \S\ref{sec3.1},
we compare $\bM(X,d)$ and $\bS(X,d)$ for $d=2,3$ respectively.
We first define a rational map $$\bar{\phi}:\bM(X,d) \dashrightarrow \bS(X,d),\qquad f \mapsto f_* \cO_C$$ where $f:C\lr X$ is a stable map.
Then the undefined locus $\Gamma(X,d)$ of the rational map
$\bar{\phi}$ is the locus of stable maps with multiple components, i.e.
there exists a component $C_1$ of $C$ such that $f|_{C_1}$ is not
generically one-to-one.

When $d=2$, $\Gamma(X,2)$ is isomorphic to $\bM(\PP \cU,2)$
where $\cU$ is the tautological rank 2 bundle over the moduli
space $F_1(X)=\cM_0(X,1)$ of lines in $X$.
By Remark \ref{rem1.1}, $\Gamma(X,2)$ is in fact a disjoint union of $\Gamma(X,\beta)\subset\bM(X,\beta)$ where $\Gamma(X,\beta)$ is a $\bM(\PP^1 ,2)$-bundle
over the moduli space $F_1(X, \frac{\beta}{2} )$ of lines in $X$ with homology class $\frac{\beta}{2} \in H_2(X,\ZZ)$
such that $i_*\frac{\beta}{2} =[\PP^1]\in H_2(\PP^r,\ZZ)$.

If we blow up $\bM(X,2)$ along $\Gamma(X,2)$ and apply the elementary modification (\cite[Chapter 5]{HL}) along the exceptional divisor with respect to the first terms in the Harder-Narasimhan filtrations, we obtain a family of stable sheaves and thus a birational morphism from the blown-up space to $\bS(X,2)$.
Then we analyze a neighborhood of the exceptional divisor
and check that this morphism is in fact a blow-up map along the locus of sheaves with linear support.

When $d=3$, we can apply the same line of ideas
but things are more complicated. By taking the direct image $f_*\cO_C$ for each $f:C\to X$ in $\bM=\bM (X,3)$, we have a family of coherent sheaves $\cE_0$ on $\bM\times X$, flat over $\bM$, and a birational map $\bar{\phi}:\bM\dashrightarrow \bS=\bS(X,3)$.
The locus of unstable sheaves is the union of two subvarieties;\begin{enumerate}
\item the locus $\Gamma_0^1$ of stable maps whose images are lines,
\item the locus $\Gamma_0^2$ of stable maps whose images consist of two lines.\end{enumerate}
As in the case of the degree $2$, the unstable loci $\Gamma_0^i$ are in fact the disjoint union of irreducible components with respect to the homology class
and thus their dimensions may vary from components to components of $\bM(X,3)$.
For a $f\in \Gamma_0^1$ whose image is a line $L$, $f_*\cO_C=\cO_L\oplus \cO_L(-1)^2$ and the normal space of $\Gamma^1_0$ in $\bM$ at $f$ is
\[
\Hom (\CC^2,\Ext^1_X(\cO_L,\cO_L(-1))).
\]
Let $\pi_1:\bM_1\to \bM$ denote the blow-up along $\Gamma_0^1$. The destabilizing quotients $f_*\cO_L\to \cO_L(-1)^2$ form a flat family $A$ over the exceptional divisor $\Gamma^1_1$ of $\pi$ and
by applying the elementary modification with respect to this family of quotients, we obtain a family $\cE_1$ of coherent sheaves on $X$ parameterized by $\bM_1$. By direct calculation, we find that the locus of unstable sheaves in $\bM_1$ still consists of two subvarieties; \begin{enumerate}
\item the proper transform $\Gamma_1^2$ of $\Gamma_0^2$,
\item the subvariety $\Gamma_1^3$ of the exceptional divisor $\Gamma_1^1$ which are fiber bundles over $\Gamma^1_0$ with fibers
$$\PP \Hom_1 (\CC^2,\Ext^1_X(\cO_L,\cO_L(-1)))\cong \PP^1\times \PP \Ext^1_X(\cO_L,\cO_L(-1))$$
where $\Hom_1$ denotes the locus of rank 1 homomorphisms.
\end{enumerate}

Next we take the blow-up $\bM_2$ of $\bM_1$ along $\Gamma_1^2$
and apply elementary modification along the exceptional divisor $\Gamma_2^2$.
Then we find that the locus of unstable sheaves is precisely $\Gamma^3_2$
where $\Gamma_2^j$ is the proper transform of $\Gamma_1^j$ for $j=1,3$.
We let $\pi_3:\bM_3\to\bM_2$ be the blow-up of $\bM_2$ along $\Gamma_2^3$
and apply elementary modification along the exceptional divisor $\Gamma_3^3$.
We let $\Gamma^j_3$ denote the proper transform of $\Gamma^j_2$ for $j=1,2$.
The upshot is a family of stable sheaves on $X$ parameterized by $\bM_3$
and thus a morphism $\bM_3\to \bS$.

To analyze the morphism $\bM_3\to \bS$, we keep track of analytic
neighborhoods of $\Gamma^1_0$ and $\Gamma^2_0$ through the sequence of
blow-ups (and -downs). It turns out that the local geometry is
completely determined by variation of GIT quotients. For instance,
a neighborhood of $\Gamma_1^1$ is a fiber bundle over $\Gamma_0^1$ with fibers the GIT
quotient of $\cO_{\PP^7\times \PP^{2t-1}}(-1,-1)$ by $SL(2)$ with
respect to the linearization $\cO(1,\lambda)$ for $0<\lambda<\!<1$
where $t=\dim \Ext^1_X(\cO_L,\cO_L(-1))$.
As we vary $\lambda$ from $0^+$ to $\infty$, the GIT quotient goes
through two flips, or two blow-ups followed by two blow-downs. The
two blow-ups correspond to our two blow-ups $\bM_3\to\bM_2\to
\bM_1$ and we can blow down twice $\bM_3\to \bM_4\to \bM_5$ in the
neighborhoods of $\Gamma^1_j$. For $\lambda>\!>1$, the GIT quotient
of $\PP^7\times \PP^{2t-1}$ by $SL(2)$ is a $\PP^7$-bundle which
can be contracted in the open neighborhood. A similar analysis for
a neighborhood of $\Gamma^2$ tells us that we can blow down
$\bM_3$ three times
\[\bM_3\to \bM_4\to \bM_5\to \bM_6\]
and the morphism $\bM_3\to \bS$ is constant on the fibers of the
blow-downs. Hence we obtain an induced morphism $\bM_6\to \bS$
which turns out to be injective. So we conclude that $\bM_6\cong
\bS$.

We can summarize the above discussion as follows.
\begin{theo}\label{th1.3} For a projective homogeneous variety $X\subset \PP^r$,
$\bS=\bS(X,3)$ is obtained from $\bM=\bM(X,3)$ by blowing up along $\Gamma^1_0$,
$\Gamma^2_1$, $\Gamma^3_2$ and then blowing down along
$\Gamma^2_3$, $\Gamma^3_4$, $\Gamma^1_5$ where $\Gamma^j_i$ is the
proper transform of $\Gamma^j_{i-1}$ if $\Gamma^j_{i-1}$ is not
the blow-up/-down center and the image/preimage of
$\Gamma^j_{i-1}$ otherwise.
\end{theo}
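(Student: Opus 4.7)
The plan is to construct an explicit resolution of the rational map $\bar{\phi}:\bM\dashrightarrow\bS$ through the three blow-ups outlined above and then verify that the resulting morphism $\bM_3\to \bS$ factors through three explicit blow-downs. The starting input is the universal family $\cE_0$ of coherent sheaves $f_*\cO_C$ on $\bM\times X$; its locus of non-stable fibers is exactly $\Gamma_0^1\cup\Gamma_0^2$. The four properties of projective homogeneous varieties collected in Lemma \ref{assu5-1} are what let me compute the relevant $\Ext$-groups and normal bundles in slices around these loci; everything else is formally analogous to the $X=\PP^r$ treatment in \cite{CK}.

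First I would carry out the three blow-ups as in the outline: $\pi_1:\bM_1\to\bM$ along $\Gamma_0^1$, followed by an elementary modification of $\pi_1^*\cE_0$ along the exceptional divisor $\Gamma_1^1$ using the destabilizing quotient $f_*\cO_C\to\cO_L(-1)^2$, yielding a new flat family $\cE_1$; then $\pi_2:\bM_2\to\bM_1$ along the proper transform $\Gamma_1^2$, with a similar elementary modification producing $\cE_2$; then $\pi_3:\bM_3\to\bM_2$ along the proper transform $\Gamma_2^3$ of $\Gamma_1^3$, again followed by an elementary modification. At each stage a direct sheaf-cohomological check, using $H^1(\PP^1,f^*T_X)=0$ together with the ambient $\Ext$-computations on $X$ afforded by Lemma \ref{assu5-1}, shows that the new unstable locus is precisely the one claimed, and that after the third modification the family $\cE_3$ is fiberwise stable. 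This produces a morphism $\Phi:\bM_3\to\bS$.

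Next I would analyze analytic neighborhoods of $\Gamma_0^1$ and $\Gamma_0^2$ through this tower via variation of GIT. Near $\Gamma_0^1$ the appropriate slice is a fiber bundle over $\Gamma_0^1$ whose fiber is a GIT quotient of
\[
\PP^{7}\times \PP^{2t-1}\git SL(2)
\]
with linearization $\cO(1,\lambda)$, where $t=\dim\Ext^1_X(\cO_L,\cO_L(-1))$; varying $\lambda$ from $0^+$ to $\infty$ produces exactly two flips, which pull back to two of our three blow-ups paired with two of the required blow-downs. A parallel VGIT picture realized as a slice around $\Gamma_0^2$ accounts for the remaining blow-up/-down pair. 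Interpreting both pictures back in $\bM_3$ shows that $\Phi$ is constant on the fibers of a sequence of smooth contractions $\bM_3\to\bM_4\to\bM_5\to\bM_6$ with centers $\Gamma^2_3$, $\Gamma^3_4$ and $\Gamma^1_5$ in that order. Finally I would check that the induced morphism $\bM_6\to\bS$ is bijective on closed points by direct inspection of the stable sheaves in its image and upgrade this to an isomorphism using normality of $\bS$.

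The main obstacle will be this middle step: pinning down the local structure around $\Gamma_0^1$ and $\Gamma_0^2$ precisely enough to recognize it as a VGIT problem. This requires a Luna-type slice at each locus (available since the relevant stabilizers are reductive) together with explicit $\Ext$-computations on $X$, with Lemma \ref{assu5-1} doing the work that transitivity of $PGL_{r+1}$ on $\PP^r$ did in \cite{CK}. Once the slice-plus-VGIT description is in place, the sequence of blow-ups and blow-downs is forced and the remaining verifications are essentially routine.
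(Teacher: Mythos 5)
Your overall route --- three blow-ups with elementary modifications, identification of the residual unstable loci, a VGIT analysis of neighborhoods of $\Gamma_0^1$ and $\Gamma_0^2$, factoring the morphism to $\bS$ through three contractions, and a final bijectivity argument --- is indeed the route the paper takes. But there is a genuine gap at the very first step: you work with ``the universal family $\cE_0$ of sheaves $f_*\cO_C$ on $\bM\times X$'' and blow up $\bM$ itself along $\Gamma_0^1$. No such family exists over the coarse moduli space precisely where you need it: stable maps in $\Gamma_0^1$ (and along part of $\Gamma_0^2$) have nontrivial automorphisms, so $\bM(X,3)$ has quotient singularities exactly along your first blow-up center and carries no universal curve or sheaf family there; consequently the flatness statements, the relative Harder--Narasimhan family $\cA_1$, and the smoothness of the blow-up centers are not available as stated. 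The paper's essential device, absent from your proposal, is to realize $\bM(X)$ as $Q_0(X)/SL(2)$, where $Q_0(X)=Q_0(\PP^r)\times_{\bM(\PP^r)}\bM(X)$ is the stable part of $\cM_0(\PP^1\times X,(1,3))$ coming from Parker's GIT description of $\bM(\PP^r)$; this $Q_0(X)$ is smooth (convexity of $\PP^1\times X$, trivial automorphisms on the stable locus), it does carry the family $\cE_0(X)$, and all blow-ups and elementary modifications are performed $SL(2)$-equivariantly upstairs and only afterwards descended to $\bM_i(X)=Q_i(X)/SL(2)$. The Luna slice you invoke for the VGIT step presupposes exactly this group action; without the equivariant model there is nothing to slice.

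Beyond this, two inputs you wave at would have to be proved to make the local VGIT model valid for a general homogeneous $X$ (this is where Lemma \ref{assu5-1} genuinely enters): the normal bundle computation $N_{L/X}\cong\cO_{\PP^1}(1)^{\oplus t}\oplus\cO_{\PP^1}^{\oplus l}$ from convexity, giving $N_{\Theta_0^1(X)/Q_0(X)}$ restricted to a fiber $(\PP^7)^s$ equal to $\cO(-1)^{\oplus 2t}$ with $t=\dim F_1(X,x)=\dim\Ext^1_X(\cO_L,\cO_L(-1))$ --- note the fiber of the neighborhood is the $SL(2)$-quotient of the total space of $\cO_{\PP^7\times\PP^{2t-1}}(-1,-1)$, not of $\PP^7\times\PP^{2t-1}$ itself; and the smoothness of the centers $\Theta_0^2(X),\Theta_1^2(X),\Theta_2^3(X)$, which uses the smoothness of the evaluation map $\cM_{0,1}(X,1)\to X$ together with clean-intersection arguments identifying each $Q_i(X)$ with a proper transform of the corresponding $\PP^r$-construction. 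Finally, your bookkeeping is slightly off: the last blow-down (of $\Gamma_5^1$) is seen in the neighborhood of $\Gamma_0^1$ as the contraction of a $\PP^7$-bundle at $\lambda\gg 1$, not in the slice around $\Gamma_0^2$; the $\Gamma_0^2$-neighborhood is needed to control the parts of $\Gamma_3^2$ and $\Gamma_4^3$ lying away from $\Gamma_0^1$.
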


Next we compare $\bH(X,d)$ and $\bS(X,d)$.
By Theorem \ref{th1.1} (1), when $d=2$, the Hilbert compactification $\bH(X,2)$ coincides with the Simpson compactification $\bS(X,2)$ because the structure sheaves of conics are stable sheaves.
In \S\ref{sec3.2}, we compare $\bH(X,d)$ and $\bS(X,d)$ for $d=3$.
By Theorem \ref{th1.2} (1), we have a morphism
\[ \bH(\PP^r,3)\lra \bS(\PP^r,3)\]
which is a smooth blow-up along the smooth locus $\Delta(\PP^r)$ of planar stable sheaves.
The inclusion $X\subset \PP^r$ induces an inclusion $\bS(X,3)\hookrightarrow \bS(\PP^r,3)$.
Similarly, the inclusion $X\subset \PP^r$ induces the inclusion map $\bH(X,3)\hookrightarrow \bH(\PP^r,3)$. Then by construction and direct calculation, the composition
$$\bH(X,3)\hookrightarrow \bH(\PP^r,3)\lra \bS(\PP^r,3)$$
factors through $\bS(X,3)$ so that we have a morphism
$\bH(X,3)\to \bS(X,3)$. Then we prove that the blow-up center $\Delta(\PP^r)$ intersects cleanly with $\bS(X,3)$ in $\bS(\PP^r,3)$ along the smooth locus $\Delta(X)$ of planar stable sheaves on $X$.
The meaning of \emph{the clean intersection} will be explained in the Definition-Proposition \ref{def-pro}.
Since $\bH(X,3)$ is the proper transform of $\bS(X,3)$ by definition, we conclude that the morphism $\bH(X,3)\to \bS(X,3)$ is the smooth blow-up along $\Delta(X)$.

The following diagram summarizes the comparison results for
a projective homogeneous variety $X\subset\PP^r$ and $d=3$:
\[
\xymatrix{ &&&\bM_3\ar[dl]_{\Gamma_2^3}\ar[dr]^{\Gamma_4^2}\\
&&\bM_2\ar[dl]_{\Gamma_1^2}&&\bM_4\ar[dr]^{\Gamma^3_5}\\
&\bM_1\ar[dl]_{\Gamma^1}&&&&\bM_5\ar[dr]^{\Gamma^1_6}&&\bH\ar[d]^{\Delta(X)}\\
\bM&&&&&&\bM_6\ar[r]^\cong & \bS. }\] All the arrows are blow-ups
and the blow-up centers are indicated above the arrows.

In \S\ref{sec4}, by using the blow-up formula of the cohomology groups (\cite{GriffithsHarris})
and the result of A. L\'opez-Mart\'in in \cite{Martin},
we calculate the Betti numbers of $\bH(X,d)$ and $\bS(X,d)$
when $d=2,3$ and $X=Gr(k,n)$ is any Grassmannian variety.

Quite recently, there has been strong interest in the Mori
theory of moduli spaces of curves. Since there are lots of
compactifications of the space of smooth curves, it is certainly a
good idea to give an order in the wild world of moduli spaces by Mori theory.
The most prominent result in this direction in recent years is the
following result of D. Chen.
\begin{theo} \cite{chen} When $X=\PP^3$ and $d=3$, $\bH(\PP^3,3)$ is a log flip of
 $\bM(\PP^3,3)$ with respect to $H+\alpha\Delta$, $-\frac{1}{5}< \alpha <0$ where $\Delta$ is the
boundary divisor and $H$ is the divisor of stable maps whose images intersect a fixed line in $\PP^3$.
\end{theo}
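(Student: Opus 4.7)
The plan is to leverage the explicit birational comparison of $\bM(\PP^3,3)$ and $\bH(\PP^3,3)$ established in Theorem \ref{th1.2} in order to reduce Chen's assertion to intersection-theoretic bookkeeping. Specializing Theorem \ref{th1.3} to $X=\PP^3$ and composing with the smooth blow-up $\bH(\PP^3,3)\to \bS(\PP^3,3)$ along $\Delta(\PP^3)$ produces
\[
\bM(\PP^3,3) \longleftarrow \bM_3 \longrightarrow \bM_6\cong \bS(\PP^3,3) \longleftarrow \bH(\PP^3,3),
\]
whose arrows are explicit (weighted) blow-ups and blow-downs. The first step is to express $H$, $\Delta$, $K_{\bM}$, and each exceptional divisor of this diagram in a common Picard basis, by repeatedly applying the canonical-bundle formulas for smooth and weighted blow-ups along the centers $\Gamma_0^1,\Gamma_1^2,\Gamma_2^3$ and $\Delta(\PP^3)$.

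Second, I would verify that the rational map $\bM(\PP^3,3)\dashrightarrow \bH(\PP^3,3)$ is small and identify the common target $Y$ of its two small contractions. A divisor count using the fiber-bundle descriptions of $\Gamma_0^1,\Gamma_0^2$ in \S\ref{sec3.1} and of $\Delta(\PP^3)$ in \S\ref{sec3.2} should show that the three exceptional divisors introduced by $\bM_3\to \bM$ are contracted again by $\bM_3\to \bM_6\cong\bS$, and that the divisor introduced by $\bH\to \bS$ is the proper transform on $\bH$ of the divisor of stable maps with planar image on $\bM$. Once smallness is established, $Y$ is simply the image of either small contraction.

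Third, I would pair $K+H+\alpha\Delta$ with the extremal curve classes on each side. The two critical thresholds $\alpha=0$ and $\alpha=-\tfrac{1}{5}$ should emerge as the zeros of linear functions in $\alpha$ obtained by intersecting $K+H+\alpha\Delta$ with curves in the fibers over $\Gamma_0^1$ from the $\bM$ side and with curves in the exceptional divisor over $\Delta(\PP^3)$ from the $\bH$ side. For $-\tfrac{1}{5}<\alpha<0$, the log divisor should have negative intersection with the curves contracted by $\bM\to Y$ and positive intersection with the curves contracted by $\bH\to Y$, which is precisely the log flip condition.

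The central obstacle is the intersection theory on the weighted blow-up centers, most notably $\Gamma_1^3\subset \bM_1$, whose fiber geometry involves the rank-$1$ locus $\PP\Hom_1(\CC^2,\Ext^1_X(\cO_L,\cO_L(-1)))\cong \PP^1\times \PP\Ext^1_X(\cO_L,\cO_L(-1))$ from \S\ref{sec3.1}. The most efficient route is to exploit the VGIT description of a neighborhood of $\Gamma^1$ recalled after Theorem \ref{th1.3}: the two wall crossings obtained by varying the linearization $\cO(1,\lambda)$ realize Chen's log flip at the numerical level, and the two critical values of $\lambda$ should translate directly into $\alpha=0$ and $\alpha=-\tfrac{1}{5}$.
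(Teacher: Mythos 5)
This statement is not proved in the paper at all: it is quoted from D.~Chen \cite{chen}, and the only thing the paper adds is the remark that, by \cite{CK}, the flip in question factors as three blow-ups followed by three blow-downs. So your proposal has to stand on its own as an independent argument, and as written it has genuine gaps rather than being a complete alternative route. The most serious one is the construction of the flip itself. A log flip of $\bM(\PP^3,3)$ with respect to $H+\alpha\Delta$ requires an actual small contraction $\bM(\PP^3,3)\to Y$ on which $H+\alpha\Delta$ is relatively anti-ample, together with a small modification $\bH(\PP^3,3)\to Y$ on which the strict transform is relatively ample. You take $Y$ to be ``the image of either small contraction,'' but the existence of such a contraction morphism is precisely what must be proved (Chen obtains it from semiampleness of $H$, essentially via the morphism to the Chow/cycle side); the factorization $\bM\leftarrow\bM_3\to\bM_6\cong\bS$ of Theorem \ref{th1.3} produces no common target $Y$ by itself, so smallness plus numerical signs on a few curves does not yet yield a flip.

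Two further points would derail the numerics. First, the theorem concerns the divisor $H+\alpha\Delta$, not $K_{\bM}+H+\alpha\Delta$; since the relevant Picard group is spanned by $H$ and $\Delta$, inserting $K$ merely reparametrizes $\alpha$ and would give a different interval, so your step three as written proves a different statement, and in any case the thresholds $0$ and $-\frac{1}{5}$ require explicit intersection numbers of $H$ and $\Delta$ with the flipping curve classes on both sides, which are only promised, not computed. In particular they cannot be read off ``directly'' from the VGIT parameter $\lambda$ of \S\ref{sec3.1}: that parameter lives in local models of neighborhoods of $\Gamma^1$ and carries no canonical identification with $\alpha$. Second, a bookkeeping error: for $X=\PP^3$ the planar locus $\Delta(\PP^3)$ is already a divisor in $\bS(\PP^3,3)$, so $\bH(\PP^3,3)\to\bS(\PP^3,3)$ is an isomorphism (the paper explicitly notes $\bH=\bS$ in this case), and the ``divisor introduced by $\bH\to\bS$'' invoked in your second step does not exist. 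The smallness claim itself is fine and does follow from the codimension counts for $\Gamma^1_0$, $\Gamma^2_0$ and the images of the contracted divisors, but the heart of Chen's theorem --- the construction of $Y$ and the identification of the interval $(-\frac{1}{5},0)$ --- is exactly what the proposal defers.
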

As shown in \cite{CK}, this flip is more precisely the composition
of three blow-ups and three blow-downs. Furthermore, we showed
that this result holds for any $\PP^r$ with $r\ge 3$ if we
replace $\bH$ by $\bS$. Note that when $X=\PP^3$, $\bH=\bS$.
We generalize this result to the case of arbitrary homogeneous projective varieties in this paper.

Another result in this line is due to D. Chen and I. Coskun as follows.
\begin{theo} \cite{chencoskun}
When $X=Gr(2,4)$ and $d=2$, $\bH$ is obtained from $\bM$ by a
blow-up followed by a blow-down.
\end{theo}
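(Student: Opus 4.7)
The plan is to derive this theorem as a direct specialization of Theorem \ref{main-thm} to $X=Gr(2,4)\subset \PP^5$ under the Pl\"ucker embedding and $d=2$. Since $Gr(2,4)$ is a projective homogeneous variety, it automatically satisfies the four properties listed in Lemma \ref{assu5-1}, so Theorem \ref{main-thm} applies. By part (1) of Theorem \ref{th1.1} extended via Theorem \ref{main-thm}, the structure sheaf of every conic in $Gr(2,4)$ is a stable sheaf, hence $\bH(Gr(2,4),2)\cong \bS(Gr(2,4),2)$.

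Next, I would apply part (2) of Theorem \ref{th1.1} (again through Theorem \ref{main-thm}) to produce a smooth variety $Z$ together with two birational contractions: $p\colon Z\to \bM(Gr(2,4),2)$, realizing $Z$ as the smooth blow-up of $\bM$ along the locus $\Gamma(Gr(2,4),2)$ of stable maps with linear image; and $q\colon Z\to \bS(Gr(2,4),2)\cong \bH(Gr(2,4),2)$, realizing $Z$ as the smooth blow-up of $\bH$ along the locus of stable sheaves with linear support. The composition $q\circ p^{-1}$ then exhibits $\bH$ as obtained from $\bM$ by blowing up $\Gamma(Gr(2,4),2)$ and subsequently blowing down the new exceptional divisor in the other direction, which is exactly the assertion.

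The main verification, and the principal obstacle, is to confirm that $p$ and $q$ are genuinely distinct blow-ups, i.e.\ that their exceptional divisors on $Z$ differ, so that the composite is a nontrivial blow-up followed by a nontrivial blow-down rather than an isomorphism. This amounts to comparing the fibers of the two projections: over a line $L\subset Gr(2,4)$, the fibers of $p$ parameterize stable-map structures of double covers of $L$, while the fibers of $q$ are projectivized spaces of extensions in $\Ext^1(\cO_L,\cO_L(-1))$ producing destabilizing quotients of $f_*\cO_C$. A direct dimension count for $Gr(2,4)$, using that lines in $Gr(2,4)$ form the $5$-dimensional flag variety $Fl(1,3;4)$ and that $\Ext^1_{Gr(2,4)}(\cO_L,\cO_L(-1))$ has positive rank, confirms that both fiber dimensions are strictly positive, so neither contraction is trivial and the resulting diagram is the desired blow-up followed by blow-down.
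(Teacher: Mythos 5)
Your proposal is correct and follows essentially the same route as the paper: the paper treats the Chen--Coskun statement as the special case $X=Gr(2,4)$ of its general $d=2$ comparison (Theorem \ref{thm1}, i.e.\ Theorem \ref{main-thm}), combined with $\bH(X,2)\cong\bS(X,2)$ because structure sheaves of conics are stable. One small slip in your final verification: on the common blow-up the roles are reversed --- the fibers of the contraction to $\bM$ are the projectivized extension spaces $\PP(\Ext^1_X(\cO_L,\cO_L(-1)))\cong\PP^1$, while the fibers of the contraction to $\bS\cong\bH$ are the $\PP^2$'s parameterizing double covers of $L$ --- but since both are positive-dimensional this does not affect your nontriviality conclusion.
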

We will see below that this theorem is true for any projective
homogeneous variety $X$. See \cite{Kiem2} for more discussions on motivations.

%\medskip
%\textbf{Acknowledgement.}
%It is a great pleasure to thank anonymous referee(s) for valuable comments.
%We would also like to thank Han-Bom Moon
%for careful reading and comments.

%%%%%%%%%%%%%%%%%%%%%%%%%%%%%%%%%%%%%%%%%%%%%%%%%%%%%%%%%%%%%%%%%%%%%%%%%%%%%%%%%%%%%%%%%%%%%%%%
%                                                                                              %
%                                   Preliminaries                                              %
%                                                                                              %
%%%%%%%%%%%%%%%%%%%%%%%%%%%%%%%%%%%%%%%%%%%%%%%%%%%%%%%%%%%%%%%%%%%%%%%%%%%%%%%%%%%%%%%%%%%%%%%%
\section{Preliminaries}
\subsection{Properties of a projective homogeneous variety}
In this subsection, we state all the properties of a projective homogeneous variety
which will be used to prove Theorems \ref{thm1}, \ref{thm2} and \ref{305}.
\begin{lemm}\label{assu5-1}
Let $X$ be a projective homogeneous variety with fixed embedding $i:X\hookrightarrow \PP^r$ and let $\cO_X(1)=i^*\cO_{\PP^r}(1)$. Then the following hold.
\begin{enumerate}
\item $H^1(\PP^1,f^*T_X)=0$ for any morphism $f:\PP^1\to X$.
\item $\mathrm{ev}:\cM_{0,1}(X,1)\to X$ is smooth where
$$\cM_{0,1}(X,1)=\{(f:\PP^1\to X, p\in
\PP^1)\,|\,\mathrm{deg} f^*\cO_X(1)=1 \}$$ is the moduli space of
$1$-pointed lines on $X$ and $\mathrm{ev}$ is the evaluation map at the marked point.
\item The moduli space $F_2(X)$ of planes in $X$ is smooth.
\item The defining ideal $I_X$ of $X$ in $\PP^r$ is generated by quadratic polynomials.
\end{enumerate}
\end{lemm}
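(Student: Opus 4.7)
The plan is to establish all four properties by exploiting the presentation $X=G/P$ with $G$ connected reductive and $P$ parabolic, together with the resulting global generation of $T_X$. The key input is the surjection $\mathfrak{g}\otimes \cO_X \twoheadrightarrow T_X$ arising from differentiating the transitive $G$-action; this exhibits $T_X$ as a quotient of a trivial bundle, so $T_X$ is globally generated.

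For (1), since $f^\ast T_X$ is globally generated on $\PP^1$, it splits as $\bigoplus_i\cO_{\PP^1}(a_i)$ with $a_i\ge 0$, forcing $H^1(\PP^1,f^\ast T_X)=0$. For (2), property (1) together with the triviality of the automorphism group of a degree-one map $\PP^1\to X$ implies that $\cM_0(X,1)$ is smooth, and $\cM_{0,1}(X,1)$ is the universal $\PP^1$-bundle over it, hence smooth as well. The differential of $\mathrm{ev}$ at $(f,p)$ factors through the evaluation $H^0(\PP^1,f^\ast T_X)\to (f^\ast T_X)_p=T_{f(p)}X$, which is surjective by global generation; so $\mathrm{ev}$ has surjective differential at every point, and combined with smoothness of source and target this yields smoothness of $\mathrm{ev}$. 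For (4), this is Kostant's classical theorem that a projective homogeneous variety embedded equivariantly by a very ample $G$-linearized line bundle is cut out by quadrics, which I would simply cite.

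For (3), at a plane $P\cong\PP^2\subset X$ smoothness of $F_2(X)$ follows from the vanishing of the obstruction $H^1(P,N_{P/X})$. Using $0\to T_P\to T_X|_P\to N_{P/X}\to 0$ together with $H^2(\PP^2,T_{\PP^2})=0$ (from the Euler sequence), it suffices to show $H^1(P,T_X|_P)=0$. By (1) and global generation, $T_X|_l\cong\cO(2)\oplus\cO(1)^a\oplus\cO^b$ for each line $l\subset P$, the $\cO(2)$ summand arising from $T_l$. Restricting via $0\to T_X|_P(-1)\to T_X|_P\to T_X|_l\to 0$ and using $H^1(T_X|_l)=0$ reduces the problem to $H^1(P,T_X|_P(-1))=0$. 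I would complete this using that $T_X|_P$ is a homogeneous bundle under the stabilizer $G_P\subset G$ of $P$ (which acts on $P\cong\PP^2$), combined with Borel--Weil--Bott vanishing applied to each isotypic summand. An alternative strategy is to note that $F_2(X)$ admits a $G$-action whose orbit through $[P]$ is smooth of the expected tangent dimension; a semicontinuity argument then shows unobstructedness on each irreducible component of $F_2(X)$.

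The main obstacle is (3). A purely iterative cohomological attack via successive restrictions to lines stalls because $H^1(P,T_X|_P(-k))$ for $k\ge 2$ is not visibly zero from line data alone. One genuinely needs the homogeneous bundle structure on $T_X|_P$ (via the transitive action of $G_P$ on $P$) together with Borel--Weil--Bott, or equivalently a direct $G$-orbit decomposition of $F_2(X)$ into smooth homogeneous strata.
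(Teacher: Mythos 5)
Your treatments of items (1), (2) and (4) are sound. For (2) you take a genuinely different route from the paper: the paper observes that $\mathrm{ev}$ is equivariant for the transitive action of the automorphism group of $X$ and invokes generic smoothness (Hartshorne III, Cor.\ 10.7), whereas you verify directly that $d(\mathrm{ev})$ is surjective at every point, using global generation of $T_X$ and smoothness of $\cM_{0,1}(X,1)$. Both arguments work; the paper's is a one-line reduction via homogeneity, yours is more hands-on and would apply to any convex variety with globally generated tangent bundle, but it needs the (standard) identification of $d(\mathrm{ev})$ with evaluation of sections of $f^*T_X$ at the marked point. For (4) both you and the paper fall back on a citation (Kostant, resp.\ Ramanathan), which is all one can reasonably do.

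The genuine gap is item (3), and you have located it yourself. The paper does not prove (3): it cites Landsberg--Manivel, Theorem 4.9 (the same reference used later for the description of $F_2(Gr(k,n))$). Your sketch stops exactly where the work begins, and neither proposed completion is a proof as stated. The Borel--Weil--Bott route presupposes that the stabilizer $G_P\subset G$ of the plane acts transitively on $P$, so that $T_X|_P$ (equivalently $N_{P/X}$) is a homogeneous bundle on $P\cong\PP^2$; this holds in the familiar examples (Grassmannians, quadrics) but is not justified in general, and even granting it one still has to decompose $N_{P/X}$ and check the vanishing summand by summand, which you do not do. The orbit/semicontinuity alternative also fails as written: smoothness of the orbit $G\cdot[P]$ says nothing about the Hilbert scheme $F_2(X)$ at $[P]$ unless one additionally shows $\dim G\cdot[P]=h^0(N_{P/X})$ (equivalently $h^1(N_{P/X})=0$), since a component of a Hilbert scheme can be everywhere non-reduced while its reduction is homogeneous; and upper semicontinuity of $h^1(N_{P/X})$ only controls generic points of a component, not the special ones where smoothness could fail. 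Moreover the implicit assumption that every plane in a given component is $G$-conjugate to $[P]$ is itself of the same depth as the statement being proved. So either carry out the comparison of $h^0(N_{P/X})$ with the orbit dimension for each family of planes, or simply cite Landsberg--Manivel as the paper does.
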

\begin{proof}
Item (1) comes from the fact that the tangent bundle $T_X$ of $X$ is globally generated. Since the automorphism group
of $X$ acts transitively on itself, the generic smoothness of a morphism \cite[Corollary 10.7, III]{Hartshorne} implies item (2).
Items (3) and (4) are from \cite[Theorem 4.9]{Landsberg} and \cite{Ramanathan} respectively.
\end{proof}
\begin{defi}
A smooth projective variety $X$ is called \emph{convex} if item (1) above is satisfied.
\end{defi}
For most of our results, we will only need (1) and (2). However when we compare $\bH(X,3)$ and $\bS(X,3)$ in \S4.2, (3) and (4) will be useful.

\subsection{Deformations of morphisms and sheaves}
Let $Y$ be a projective curve with at worst nodal singularities and $X$ be a smooth projective variety. As we identify a map $f:Y\to X$ with its graph $G_f \subset Y\times X$ and thus a point in Hilbert scheme of $Y\times X$,
we have the following deformation theory of the morphism $f$.
\begin{prop}\label{rem1}\cite[Theorem 2.16]{Kollar}
The tangent space (resp. obstruction space) of $\Hom (Y,X)$ at a map $f:Y \lr X$ is
\[\Ext^{0}(f^*\Omega_X ,\cO_Y)\qquad (\mbox{resp. }\Ext^{1}(f^*\Omega_X ,\cO_Y)).\]
\end{prop}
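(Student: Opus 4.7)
The plan is to reduce the question to the standard deformation theory of the Hilbert scheme via the graph construction already indicated before the statement. Under $f\mapsto \Gamma_f$, the morphism space $\Hom(Y,X)$ embeds as an open subscheme of $\mathrm{Hilb}(Y\times X)$, namely the locus of closed subschemes whose first projection to $Y$ is an isomorphism (an open condition on the Hilbert scheme). Hence the tangent and obstruction spaces of $\Hom(Y,X)$ at $f$ coincide with those of $\mathrm{Hilb}(Y\times X)$ at $[\Gamma_f]$, and it remains only to compute the latter.

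By the standard deformation theory of the Hilbert scheme, provided $\Gamma_f\subset Y\times X$ is a local complete intersection, the tangent space at $[\Gamma_f]$ is $\Hom_{\Gamma_f}(\cI_{\Gamma_f}/\cI_{\Gamma_f}^2,\cO_{\Gamma_f})$ and the obstruction space is $\Ext^1_{\Gamma_f}(\cI_{\Gamma_f}/\cI_{\Gamma_f}^2,\cO_{\Gamma_f})$. Two points then remain: (a) $\Gamma_f$ is l.c.i.\ in $Y\times X$, and (b) the conormal sheaf is canonically identified with $f^*\Omega_X$ via the isomorphism $\Gamma_f\cong Y$. Granting these, the two Ext-groups become $\Ext^0(f^*\Omega_X,\cO_Y)$ and $\Ext^1(f^*\Omega_X,\cO_Y)$, which is exactly the asserted statement.

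Both (a) and (b) come from a single local computation, and this is also the main substantive point: the nodes of $Y$ descend to singularities of $Y\times X$ along $\Gamma_f$, so a priori the l.c.i.\ property could fail; it is the smoothness of $X$ that rescues the situation. Near $(y,f(y))\in \Gamma_f$, pick local coordinates $z_1,\dots,z_n$ on $X$ about $f(y)$ and arbitrary lifts $\tilde f_i$ to $Y$ of the pullbacks $f^\# z_i$; then $\Gamma_f$ is cut out in $Y\times X$ by $z_1-\tilde f_1,\dots,z_n-\tilde f_n$. Because the $z_i$ are transverse to $Y$, this is a regular sequence on $Y\times X$ whether $Y$ is smooth or nodal at $y$, so $\Gamma_f$ is l.c.i.\ and the conormal sheaf is locally free of rank $n=\dim X$ with basis the $z_i-\tilde f_i$. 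Sending $f^\# \mathrm{d}z_i\mapsto z_i-\tilde f_i$ defines a canonical, lift-independent isomorphism $f^*\Omega_X\cong \cI_{\Gamma_f}/\cI_{\Gamma_f}^2$, and plugging into the Hilbert-scheme deformation theory yields the assertion.
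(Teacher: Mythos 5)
Your proposal is correct and follows essentially the same route as the paper, which does not reprove the statement but quotes Koll\'ar's Theorem 2.16 after indicating precisely this reduction: identify $f$ with its graph $\Gamma_f\subset Y\times X$, realize $\Hom(Y,X)$ as an open subscheme of the Hilbert scheme of $Y\times X$, and use that $\Gamma_f$ is a regular (l.c.i.) embedding with conormal sheaf $f^*\Omega_X$ because $X$ is smooth, so the tangent and obstruction spaces become $\Ext^0(f^*\Omega_X,\cO_Y)$ and $\Ext^1(f^*\Omega_X,\cO_Y)$. Your local regular-sequence computation is exactly the key point carried out in the cited reference (a section of a smooth morphism is a regular immersion), so there is nothing to add.
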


If we allow $Y$ to vary, we have the following.
\begin{prop}\cite[Proposition 1.4, 1.5]{LiTian}\label{litian}
Let $f:C\to X$ be a point in the moduli space of stable maps $\cM_0(X,\beta)$ of genus $0$ to $X$ with homology class $\beta$. Then the tangent space (resp. the obstruction space) of $\cM_0(X,\beta)$ at $[f]$ is given by
\begin{equation}\label{2012}
\Ext^1_C([f^*\Omega_X\to\Omega_C],\cO_C)\qquad (\mbox{resp. }\Ext^2_C([f^*\Omega_X\to\Omega_C],\cO_C)),
\end{equation}
where $[f^*\Omega_X\to\Omega_C]$ is thought of as a complex of sheaves concentrated on the interval $[-1,0]$.
\end{prop}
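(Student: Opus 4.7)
The plan is to recognize this as the standard consequence of the cotangent-complex formalism for a morphism, applied to $f\colon C \to X$. First, since $X$ is smooth and $C$ is a nodal curve (hence a local complete intersection), the absolute cotangent complexes reduce to $L_X\simeq \Omega_X$ and $L_C\simeq \Omega_C$ in cohomological degree $0$. The fundamental triangle for the composition $C\to X\to \spec\CC$ then yields the distinguished triangle
\[
f^*\Omega_X \lra \Omega_C \lra L_{C/X}^\bullet \lra f^*\Omega_X[1],
\]
exhibiting the relative cotangent complex $L_{C/X}^\bullet$ as the two-term complex $[f^*\Omega_X\to\Omega_C]$ placed in degrees $[-1,0]$, precisely as in the statement.

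Second, I would apply Illusie's deformation theory for morphisms: infinitesimal deformations of the morphism $f$ over $\spec\CC[\epsilon]/(\epsilon^2)$, allowing both the source curve and the map to vary, are classified by $\Ext^1(L_{C/X}^\bullet,\cO_C)$, while the obstruction to lifting such a deformation to a small extension of Artinian rings lies in $\Ext^2(L_{C/X}^\bullet,\cO_C)$. Since the stability condition on $f$ is open, these also serve as the tangent and obstruction spaces of $\cM_0(X,\beta)$ at $[f]$.

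Third, to pin down the moduli interpretation and reconcile the answer with the excerpt's Proposition \ref{rem1}, I would apply $R\Hom(-,\cO_C)$ to the above triangle to obtain the long exact sequence
\[
\cdots \to \Ext^{i}(\Omega_C,\cO_C)\to \Ext^{i}(f^*\Omega_X,\cO_C)\to \Ext^{i+1}(L_{C/X}^\bullet,\cO_C)\to \Ext^{i+1}(\Omega_C,\cO_C)\to \cdots.
\]
Here $\Ext^0(f^*\Omega_X,\cO_C)=H^0(C,f^*T_X)$ records deformations of $f$ with $C$ fixed (Proposition \ref{rem1}), while $\Ext^1(\Omega_C,\cO_C)$ records abstract deformations of the nodal curve $C$. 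The sequence then identifies $\Ext^1(L_{C/X}^\bullet,\cO_C)$ as the space of simultaneous compatible deformations of the pair $(C,f)$, which is exactly what the moduli problem demands; an analogous analysis in degree $2$ yields the obstruction group.

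The hard part will be controlling the behavior at the nodes of $C$, where $\Omega_C$ fails to be locally free and both terms of $L_{C/X}^\bullet$ contribute nontrivially to local $\mathcal{E}xt$-sheaves. I would invoke the local-to-global spectral sequence
\[
H^p(C,\mathcal{E}xt^q(L_{C/X}^\bullet,\cO_C)) \Rightarrow \Ext^{p+q}(L_{C/X}^\bullet,\cO_C)
\]
and verify that its $\mathcal{E}xt^1$-stalks at a node precisely parameterize smoothings of that node together with the compatibility data that the smoothing extends to a deformation of $f$. This bookkeeping at the singular points of $C$ is the essential geometric content captured by $L_{C/X}^\bullet$ that is invisible in the naive deformation theory of $f$ on a fixed source.
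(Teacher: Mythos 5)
The paper does not prove this proposition at all: it is quoted from Li--Tian (Propositions 1.4, 1.5), so there is no internal argument to compare yours against. Your sketch is the standard proof of the cited fact and is essentially correct: for a nodal curve $L_C\simeq\Omega_C$ (note this uses injectivity of the conormal map $I/I^2\to\Omega|_C$, i.e.\ reducedness of $C$, not merely the lci property, which only gives amplitude $[-1,0]$), the fundamental triangle then identifies the relative cotangent complex with $[f^*\Omega_X\to\Omega_C]$ in degrees $[-1,0]$, and Illusie's theory for deformations of the pair $(C,f)$ with fixed smooth target gives first-order deformations in $\Ext^1$ and obstructions in $\Ext^2$ of this complex, with the long exact sequence reconciling it with Proposition \ref{rem1}. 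The one caveat worth recording is that these are the tangent and obstruction spaces of the deformation functor (equivalently, of the moduli stack): at a stable map with nontrivial $\Aut(f)$ the coarse space $\cM_0(X,\beta)$ is only \'etale-locally the quotient of this $\Ext^1$ by $\Aut(f)$, which is precisely how the paper uses the statement in the paragraph that follows it.
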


When $X$ is a projective homogeneous variety, the obstruction space
$\Ext^2_C([f^*\Omega_X\to\Omega_C],\cO_C)$
is trivial because of item (1) in Lemma \ref{assu5-1} and the exact sequence
\[
H^1(f^*T_X)=\Ext^{1}(f^*\Omega_X ,\cO_C) \lr  \Ext^2 ([f^*\Omega_X \lr \Omega_C],\cO_C)\lr \Ext^2(\Omega_C,\cO_C)=0.
\]
Therefore, \'etale locally near a point $f$, $\cM_0(X,\beta)$ is isomorphic to a quotient
$$\Ext^1_C([f^*\Omega_X\to\Omega_C],\cO_C)/\mathrm{Aut}(f),$$
where $\mathrm{Aut}(f)$ is the automorphism group of the stable map $f$.

Deformation theory of stable sheaves is also well understood as follows.
\begin{prop}\cite[Corollary 4.5.2]{HL}
For a stable sheaf $E$ on a smooth projective variety $X$,
the tangent space (resp. the obstruction space) of
the Simpson moduli space $\cS imp^{P}(X)$ with fixed Hilbert polynomial $P$ is given by
\[
\Ext_X^1(E,E) \qquad (\mbox{resp. } \Ext_X^2(E,E)).
\]
\end{prop}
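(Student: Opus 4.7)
The approach is standard deformation theory of coherent sheaves, essentially that of Huybrechts--Lehn \cite[Chapter 2, Chapter 4]{HL}. The plan is to (a) identify first-order deformations of $E$ with $\Ext^1_X(E,E)$, (b) identify the obstruction class of an $n$-th order deformation with an element of $\Ext^2_X(E,E)$, and (c) verify that stability of $E$ guarantees that this infinitesimal calculation actually computes the local structure of the moduli scheme $\cS imp^P(X)$, not merely of the underlying stack.

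First, by the (co)representability property of the moduli functor, the tangent space at $[E]$ receives a map from the set of isomorphism classes of flat families $\tilde E$ on $X\times \Spec \CC[\epsilon]/(\epsilon^2)$ extending $E$. I would take a locally free resolution $F^\bu\to E$ and lift it termwise to $X\times\Spec\CC[\epsilon]/(\epsilon^2)$: local lifts of the differentials always exist, and the resulting hypercohomology class in $\mathbb{H}^1(\mathcal{H}om(F^\bu,F^\bu))\cong\Ext^1_X(E,E)$ classifies the flat deformation up to isomorphism. Equivalently, one computes the deformation functor locally (where every deformation of $E$ over an affine base is trivial up to isomorphism) and patches via the local-to-global spectral sequence of $\mathcal{E}xt^i(E,E)$.

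For the obstruction space, given a flat deformation $\tilde E_n$ of $E$ over $A_n:=\CC[t]/(t^{n+1})$, I would attempt to extend $\tilde E_n$ to a flat sheaf over $A_{n+1}$. Fixing an affine open cover $\{U_i\}$ of $X$, local extensions $\tilde E_{n+1}^{(i)}$ exist because $U_i$ is affine and $A_{n+1}\to A_n$ is a small surjection whose kernel $(t^{n+1})$ is annihilated by the maximal ideal. Two local extensions on $U_i\cap U_j$ differ by an automorphism reducing to the identity modulo $t^{n+1}$, hence by an element of $t^{n+1}\cdot\mathcal{E}nd(E)\cong\mathcal{E}nd(E)$; the standard cocycle computation shows that these glue to a class in $\check H^2(\{U_i\},\mathcal{E}nd(E))=\Ext^2_X(E,E)$ whose vanishing is equivalent to the existence of a global extension.

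The main subtlety, and the step I regard as the core of the argument, is passing from this infinitesimal computation to the actual local structure of $\cS imp^P(X)$. Stability of $E$ enters crucially here: it implies $\Hom_X(E,E)=\CC$, so that $E$ is simple, and Simpson's GIT construction realizes $\cS imp^P(X)$ near $[E]$ as a geometric quotient of an open subset of a Quot scheme by $PGL_N$ with (essentially) trivial stabilizer at $[E]$. Combining simplicity with this geometric-quotient property, \'etale locally the moduli space is modeled by the formal versal deformation space of $E$, whose Kuranishi-type tangent and obstruction spaces are precisely the $\Ext^1_X(E,E)$ and $\Ext^2_X(E,E)$ computed above.
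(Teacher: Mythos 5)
The paper itself gives no proof of this proposition; it is quoted directly from \cite[Corollary 4.5.2]{HL}, so your argument has to stand on its own. Your step (a) and the reduction in (c) via simplicity of $E$ and the principal $PGL_N$-bundle structure in Simpson's construction are fine in outline. The genuine gap is in (b), the identification of the obstruction space. The \v{C}ech argument you run (local extensions exist because $U_i$ is affine, any two local extensions differ by an automorphism congruent to the identity, hence a $2$-cocycle with values in $\mathcal{E}nd(E)$) is the standard argument for \emph{locally free} sheaves, and both of its key assertions fail for general coherent $E$. First, local extendability over a small extension is not a consequence of affineness: the relevant obstruction is a module-theoretic class in $\Ext^2_{U_i}(E|_{U_i},E|_{U_i})$, not a sheaf-cohomology class on $U_i$, and this group need not vanish on an affine unless $\mathcal{E}xt^{\geq 1}(E,E)=0$. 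Second, even when local extensions exist they are not unique up to automorphism; they form a pseudo-torsor under $\Ext^1_{U_{ij}}(E|_{U_{ij}},E|_{U_{ij}})$, which is nonzero in the non-locally-free case. Consequently your cocycle lives in $\check H^2(\{U_i\},\mathcal{E}nd(E))=H^2(X,\mathcal{E}nd(E))$, and the identification of this group with $\Ext^2_X(E,E)$ holds only when the local Ext sheaves $\mathcal{E}xt^{\geq 1}(E,E)$ vanish. In the situation of this paper the sheaves are pure one-dimensional sheaves on a higher-dimensional $X$ (e.g. $\cO_L\oplus\cO_L(-1)^{\oplus 2}$ and their modifications), hence never locally free, and the local-to-global spectral sequence has nontrivial contributions from $H^0(\mathcal{E}xt^2(E,E))$ and $H^1(\mathcal{E}xt^1(E,E))$ which your construction misses entirely.

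To repair this, either carry out the cocycle computation at the level of a finite locally free resolution $F^\bu\to E$, so that the obstruction is produced in the hypercohomology $\mathbb{H}^2(X,\mathcal{H}om^\bu(F^\bu,F^\bu))\cong\Ext^2_X(E,E)$, or follow the route actually taken in \cite[Corollary 4.5.2]{HL}: present $E$ as a quotient $q\colon \cO_X(-m)^{\oplus N}\twoheadrightarrow E$ with kernel $K$, use the standard tangent and obstruction spaces of the Quot scheme, namely $\Hom(K,E)$ and $\Ext^1(K,E)$, and transfer them to the moduli space along the principal $PGL_N$-bundle $R^s\to M^s$ via the long exact sequence of $\Ext(-,E)$ applied to $0\to K\to \cO_X(-m)^{\oplus N}\to E\to 0$, using $\Ext^{\geq 1}(\cO_X(-m)^{\oplus N},E)=0$ for $m\gg 0$; stability (hence simplicity) of $E$ enters exactly as you indicate in (c).
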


\subsection{Elementary modification of sheaves}
We recall the notion of destabilizing subsheaf
(resp. destabilizing quotient sheaf) of a pure sheaf (\cite[Chapter 2]{HL}).
For a fixed ample line bundle $\cO_X(1)$ on a smooth projective variety $X$,
let $$p (E):=\frac{\chi(E(m))}{r(E)}$$
be the reduced Hilbert polynomial of a pure sheaf $E$ on $X$
where $r(E)$ denotes the leading coefficient of the Hilbert polynomial $\chi(E(m))$ for $m>\!>0$.
Every pure sheaf has a unique filtration which is called the Harder-Narasimhan filtration.
\begin{defiprop}\cite[Theorem 1.3.4]{HL}
%Under the above assumption,
\begin{enumerate}
  \item For a pure sheaf $E$ on $X$, there exists a unique filtration of $E$
  \[ 0=E_0 \subset E_1 \subset E_2 \cdots \subset E_{k-1} \subset E_k =E,\]
  such that the reduced Hilbert polynomials decrease $p(E_i/E_{i-1})> p(E_{i+1}/E_i)$ and
  each quotient $E_{i+1}/E_i$ is semistable for any $i$.
  \item The first non-zero term $E_1$ (resp. the quotient $E/E_1$) is called as the destabilizing subsheaf (resp. the quotient sheaf) of $E$.
\end{enumerate}
\end{defiprop}
Note that the above theorem has a family version \cite[Theorem 2.3.2]{HL} and if there is a flat family of pure sheaves, a relative Harder-Narasimhan filtration exists.
\begin{exam}\label{25}
The destabilizing subsheaf (resp. the destabilizing quotient sheaf)
of the pure sheaf $\cO_L \oplus \cO_L(-1)$ on $\PP^2$ is $\cO_L$(resp. $\cO_L(-1)$) where
$L$ is a line in $\PP^2$.
\end{exam}
Now we introduce the notion of a modified sheaf which is originally introduced by Langton to prove the properness of
the moduli space of torsion free sheaves (\cite[Theorem 2.B.1]{HL}, \cite{langton}).
This is one of the main tools for constructing a morphism to the Simpson moduli space.
\begin{defi}
Let $\cE(X)$ be a flat family of sheaves on $X$ parameterized by a smooth variety $S$.
Let $Z$ be a smooth divisor of $S$ such that $\cE(X)|_Z$
has a flat family $\cA$ of destabilizing quotients.
Then
$$ elm_Z (\cE(X),\mathcal{A}):= \text{ker}\{\cE(X) \lr \cE(X)|_{X\times Z} \lr  \mathcal{A}\}$$
is called the elementary modification of sheaves $\cE(X)$ along $Z$.
\end{defi}
As we will see in Example \ref{exa1} below, the effect of elementary modification at the center $Z$ is the interchange of the sub and quotient sheaves.
\begin{exam}\label{exa1}
For a flat family of stable maps in $\PP^2$ of degree $2$
$$ \textbf{f}: \cC=\PP^1 \times \CC \lr \PP^2 \times \CC,\quad (s:t)\times (a) \mapsto (s^2:t^2:a st)\times (a),$$
let
$$\cE(\PP^2)=\textbf{f}_*\cO_{\cC}$$
be the direct image sheaf on $\PP^2 \times \CC$ which is flat over $\CC$
and let $Z=\{0\}$ be the origin of $\CC$. Then the central fiber $\cE(\PP^2)|_{\PP^2 \times \{0\}}$ fits into a short exact sequence
\[
\ses{\cO_L}{\cE(\PP^2)|_{\PP^2 \times \{0\}}}{\cO_L(-1)}
\]
where $L$ is the line $\{(z_0:z_1:0)\}$ in $\PP^2$.
Now let $A:= \cO_L(-1)$ which is the destabilizing quotient sheaf of $\cE(\PP^2)|_{\PP^2 \times \{0\}}$.
By direct calculation with local charts, it is straightforward that the central fiber of the modified sheaf is
\[elm_{\{0\}} (\cE(\PP^2),\cO_L(-1))|_{\PP^2 \times {\{0\}}}\cong\cO_{L^2},\]
where $L^2$ is the unique double line of $L$ in $\PP^2$ whose defining ideal is given by $<z_2^2>$.
Note that $\cO_{L^2}$ fits into the non-split short exact sequence
$$\ses{\cO_L(-1)}{\cO_{L^2}}{\cO_L}$$
and hence $\cO_{L^2}$ is stable.
\end{exam}

%%%%%%%%%%%%%%%%%%%%%%%%%%%%%%%%%%%%%%%%%%%%%%%%%%%%%%%%%%%%%%%%%%%%%%%%%%%%%%%%%%%%%%%%%%%%%%%%%%%%%
%                                               section 2                                           %
%                                                                                                   %
%%%%%%%%%%%%%%%%%%%%%%%%%%%%%%%%%%%%%%%%%%%%%%%%%%%%%%%%%%%%%%%%%%%%%%%%%%%%%%%%%%%%%%%%%%%%%%%%%%%%%

\section{Comparison result for $d=2$}\label{sec2}
In this section we relate the Kontsevich
compactification $\bM(X)=\bM(X,2)$ with the Simpson
compactification $\bS(X)=\bS(X,2)\cong \bH(X,2)$ in terms of explicit blow-ups.
Our goal is to generalize Theorem 4.1 and Proposition 4.3 in \cite{Kiem} to projective homogeneous varieties.
Throughout this section, we only use the property (1) of Lemma \ref{assu5-1}.
In \S\ref{sec2.1}, we blow up $\bM(X)$ and apply elementary modification of sheaves to construct a family of stable sheaves on $X$ which gives rise to a morphism to $\bS(X)$.
In \S\ref{sec2.2}, we show that the morphism to $\bS(X)$ is in fact a blow-up. %Since the arguments are very similar to the projective space case in \cite{Kiem} and we will rely heavily on the results of the paper, an interested reader may want to peruse \cite{Kiem} before reading this section.

%%%%%%%%%%%%%%%%%%%%%%%%%%%%%%%%%%%%%%%%%%%%%%%%%%%%%%%%%%%%%%%%%%%%%%%%%%%%%%%%%%%%%%%%%%%%%%%%%%%%%%%%%

\subsection{Blow-ups}\label{sec2.1}
To avoid singularities, we express $\bM(X)$ as an $SL(2)$-quotient of a smooth variety $P_1(X)$
and construct a family of stable sheaves on $X$ parameterized by a blow-up $P_2(X)$ of $P_1(X)$
via elementary modification. In this way we obtain an invariant morphism $P_2(X)\to \bS(X)$
which induces a birational morphism $P_2(X)/SL(2)\to \bS(X)$. By \cite{Kirwan}, $P_2(X)/SL(2) \to P_1(X)/SL(2)=\bM(X)$ is a blow-up.

Let
$$P_0 :=\PP(\Sym^2\CC^2 \otimes \CC^{r+1})$$
be the projective space where $SL(2)$ acts on $\Sym^2\CC^2$ in the standard fashion and trivially on $\CC^{r+1}$.
An element of $P_0$ can be thought of as an $(r+1)$-tuple of quadratic polynomials in two variables up to constant multiple.
Let $P_0^{ss}$ denote the semistable part of $P_0$ and let $\Sigma^k\subset P_0^{ss}$
be the locus of tuples with $k$ common zeros so that we have a disjoint union
$$P_0^{ss} =\Sigma^0 \cup \Sigma^1 \cup \Sigma^2.$$
Let $P_1$ be the blow-up of $P_0^{ss}$ along $\Sigma^2$
and let $\rho:P_1 \lr P_0^{ss}$ be the blow-up morphism
with the exceptional divisor $E$.
Then the set $P_1^{ss}$ of the semistable points coincides
with the set $P_1^s$ of the stable points because the
strictly semistable points disappear
with respect to the linearization on the line bundle
$\rho^* \cO(1) \otimes \cO(-\epsilon E),0<\epsilon <\!< 1$ (\cite[\S 6]{Kirwan}).
By modifying some tautological family of rational maps over $P_1^s$ which is given by the evaluation morphism,
the third named author constructed a family of stable maps to $\PP^1\times\PP^r$
over $ P_1 (\PP^r):=P_1^s $ (\cite[Proof of Theorem 4.1]{Kiem})
\begin{equation}\label{eq-1-1}
 \xymatrix{\widetilde{\cC} \ar[d]\ar[r]& \PP^1\times\PP^r \\
 P_1(\PP^r).& \\}
\end{equation}
By composing \eqref{eq-1-1} with the projection $\PP^1\times \PP^r\to \PP^r$ and stabilizing $\widetilde{\cC}$, we obtain a family of stable maps
\begin{equation}\label{eq-1}
 \xymatrix{{\cC} \ar[d]\ar[r]& \PP^r \\
 P_1(\PP^r)& \\}
\end{equation}
and hence an $SL(2)$-invariant morphism
\begin{equation}\label{eq-2}
P_1 (\PP^r) \lr \bM(\PP^r).
\end{equation}
Finally he showed that $P_1 (\PP^r)/SL(2)\cong  \bM(\PP^r)$
by using Zariski's main theorem (\cite[\S9, III]{Mumford}).
By Luna's slice theorem (\cite[Appendix 1.D]{Mumford2}),  $P_1(\PP^r)$ is a principal bundle over $\bM(\PP^r)$ in the \'etale sense.
Furthermore, by \eqref{eq-1-1}, we have an injective morphism
\begin{equation}\label{1103231} P_1(\PP^r)\lra \cM_0(\PP^1\times\PP^r,(1,2))\end{equation}
to the moduli space of stable maps to $\PP^1\times \PP^r$ of genus $0$ and bidegree $(1,2)$.
By the construction of \eqref{eq-1-1} in \cite{Kiem}, the morphism in \eqref{1103231} factors through the open subvariety of the moduli space $\cM_0(\PP^1\times\PP^r,(1,2))$ consisting of stable maps whose automorphism groups are trivial.
Since $\PP^1\times\PP^r$ is convex, this open subvariety is smooth by \cite[Theorem 2]{FP}.
As \eqref{1103231} is an injective morphism of smooth varieties which is an isomorphism on the open locus of nonsingular conics $\rho^{-1}(P_0^s)$, we find that \eqref{1103231} is an open immersion by \cite[II.4. Theorem 2]{Shafarevich}.

For a projective homogeneous variety $X\subset\PP^r$, we consider the fiber products
\begin{equation}\label{eq-3}
P_1(X):=\bM(X)\times_{\bM(\PP^r)}P_1 (\PP^r),
\end{equation}
$$
\cC_X :=P_1(X) \times_{P_1 (\PP^r)}\cC,
$$
from \eqref{eq-2}, \eqref{eq-1} and the inclusion $\bM(X)\hookrightarrow \bM(\PP^r)$. By definition, it is obvious that we have a Cartesian square of open immersions
$$\xymatrix{ P_1(X)\ar@{^(->}[r]\ar@{^(->}[d] & \cM_0(\PP^1\times X,(1,2))\ar@{^(->}[d]\\
P_1(\PP^r) \ar@{^(->}[r] &\cM_0(\PP^1\times\PP^r,(1,2)).}$$
The image of $P_1(X)$ in $\cM_0(\PP^1\times X,(1,2))$ is contained in the open locus of stable maps
with no non-trivial automorphisms.
Since this locus is smooth by the convexity of $\PP^1\times X$ (\cite[Theorem 2]{FP}), $P_1(X)$ is also a \emph{smooth} quasi-projective variety.

%Furthermore, by \eqref{eq-1-1}, we have an injective morphism
%\begin{equation}\label{1103231} P_1(\PP^r)\lra \cM_0(\PP^1\times\PP^r,(1,2))\end{equation}
%to the moduli space of stable maps to $\PP^1\times \PP^r$ of genus $0$ and bidegree $(1,2)$. Since all stable maps in $\cM_0(\PP^1\times\PP^r,(1,2))$ have trivial automorphism groups and $\PP^1\times\PP^r$ is convex, $\cM_0(\PP^1\times\PP^r,(1,2))$ is a smooth projective variety. As \eqref{1103231} is an injective morphism of smooth varieties which is an isomorphism on the open locus of nonsingular conics $\rho^{-1}(P_0^s)$, we find that \eqref{1103231} is an open immersion by \cite[II.4. Theorem 2]{Shafarevich}.

%For a projective homogeneous variety $X\subset\PP^r$, we consider the fiber products
%\begin{equation}\label{eq-3}
%P_1(X):=\bM(X)\times_{\bM(\PP^r)}P_1 (\PP^r),
%\end{equation}
%$$
%\cC_X :=P_1(X) \times_{P_1 (\PP^r)}\cC,
%$$
%from \eqref{eq-2}, \eqref{eq-1} and the inclusion $\bM(X)\hookrightarrow \bM(\PP^r)$. By definition, it is obvious %that we have a Cartesian square of open immersions
%$$\xymatrix{ P_1(X)\ar@{^(->}[r]\ar@{^(->}[d] & \cM_0(\PP^1\times X,(1,2))\ar@{^(->}[d]\\
%P_1(\PP^r) \ar@{^(->}[r] &\cM_0(\PP^1\times\PP^r,(1,2)).}$$
%Since $\PP^1\times X$ is convex and the automorphism groups of stable maps in $\cM_0(\PP^1\times X,(1,2))$ are all trivial, $\cM_0(\PP^1\times X,(1,2))$ is a smooth projective variety and hence $P_1(X)$ is a smooth quasi-projective variety.

On the other hand, since $P_1(\PP^r)$ is a principal bundle over $\bM(\PP^r)$, $P_1(X)$ is $SL(2)$-invariant and $P_1(X)/SL(2)\cong \bM(X)$.
Moreover, there exists an induced family of stable maps to $\PP^r$ over $P_1(X)$
all of which factor through $X$ so that we get a diagram
$$
 \xymatrix{\cC_X \ar[d]_{\pi}\ar[r]^{ev}& X \\
 P_1 (X).&\\
}
$$

To define a rational map from $P_1(X)$ to $\bS(X)$, we consider the morphism
$$
(ev,\pi):\cC_X \lr X \times P_1 (X).
$$
The direct image sheaf $\cE_0(X) := (ev,\pi)_*\cO_{\cC_X}$ is
a family of coherent sheaves on $X$, flat over $P_1(X)$
because the Hilbert polynomial is constantly $2m+1$ and $P_1(X)$ is a reduced scheme
(\cite[Theorem 9.9, III]{Hartshorne}).
By Lemma \ref{dirstab} below,
$\cE_0(X)|_{X\times \{z\}}$ is a stable sheaf for each closed point $z \in P_1(X)$
which gives rise to a nonsingular conic.
Hence there exists a rational map
\begin{equation}\label{eq-4}
\phi :P_1(X) \dashrightarrow \bS(X)
\end{equation}
by the universal property of $\bS(X)$. By definition, $\phi$ is $SL(2)$-invariant and
thus we have an induced birational map
\begin{equation}\label{eq-5}
\overline{\phi} :\bM(X) \dashrightarrow \bS(X).
\end{equation}

Next we find the undefined locus of $\phi$
and then blow up $P_1(X)$ along the locus.
\begin{lemm}\label{dirstab} \cite[Proposition 3.18]{chencoskun}
For $r\ge 3$, let
$f: C \lra X\subset \PP^r$ be a stable map of genus $0$ and degree $d\le 3$.
Then the direct image sheaf $f_* \cO_C $ is stable
if $f$ is not a multiple cover
(i.e. no component of the image $f(C)$ is multiply covered by $f$).
\end{lemm}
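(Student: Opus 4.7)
The plan is to show that every proper subsheaf $F\subsetneq E:=f_*\cO_C$ has reduced Hilbert polynomial strictly less than $p(E)=m+1/d$, by combining two standard reductions with a uniform bound that exploits the connectedness of $C$.

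First I would perform the standard reductions. If $E/F$ has a nonzero $0$-dimensional torsion subsheaf $T$, then replacing $F$ by $\ker(E\twoheadrightarrow (E/F)/T)$ enlarges $F$ without decreasing $p(F)$, so we may assume $E/F$ is pure $1$-dimensional. If in addition $\mathrm{supp}(F)=f(C)$, then $F$ and $E$ have the same leading Hilbert coefficient $d$, and the proper inclusion $F\subsetneq E$ forces $\chi(F)<\chi(E)$, giving $p(F)<p(E)$ directly. We may therefore assume $\mathrm{supp}(F)=:Y$ is a proper reduced subcurve of $f(C)$. In particular, when $f(C)$ is irreducible---covering the smooth conic, smooth rational cubic, and irreducible nodal or cuspidal plane cubic cases---no such $Y$ exists, and stability is automatic.

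When $f(C)$ is reducible, write $f(C)=Y\cup Y^c$ with $Y^c$ the complementary union of components of $f(C)$, and let $F_Y\subset E$ denote the maximal subsheaf supported on $Y$; then $F\subset F_Y$, so it suffices to bound $\chi(F_Y(m))$. A section of $F_Y$ must vanish on $Y^c$ outside $Y\cap Y^c$, hence throughout $Y^c$. At each point $p\in Y\cap Y^c$ two situations occur: either $p$ is the image of a node of $C$ or of a contracted component of $C$ (a \emph{real} intersection), in which case vanishing of the $Y^c$-branches of a local section of $E$ forces the $Y$-branch to vanish; or $p$ arises from $f$ identifying two distinct smooth points of $C$ (an \emph{auxiliary} intersection), in which case the two branches of $E$ at $p$ are locally independent and no new constraint is imposed. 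Letting $n_r$ count the real intersections, this yields $F_Y|_Y \hookrightarrow \cO_Y(-D)$ for an effective divisor $D$ on $Y$ of degree at least $n_r$, and hence
$$
\chi(F_Y(m)) \le (\deg Y)\cdot m + 1 - p_a(Y) - n_r.
$$

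Connectedness of $C$ then forces $n_r\ge 1$: any connection between a component of $C$ lying over $Y$ and one lying over $Y^c$ must pass through a node of $C$ or through a contracted component of $C$, both of which necessarily map into $Y\cap Y^c$ and thus contribute to $n_r$. For $d\le 3$, a direct enumeration shows that every proper sub-union $Y$ arising in our setting is a tree of $\PP^1$'s with $p_a(Y)=0$, so the bound collapses to $\chi(F_Y(m))\le (\deg Y)\cdot m$, yielding $p(F_Y)\le m<m+1/d=p(E)$ as required. The main obstacle in carrying this out is the careful enumeration of reducible configurations of $f(C)$ for $d=3$---line plus irreducible conic meeting at $1$ or $2$ points, and three lines in chain, triangle, or concurrent configurations (the last two possibly involving a contracted component of $C$ or gluing of endpoints by $f$)---together with a precise verification of the bridging argument, and hence the inequality $\chi(F_Y(m))\le (\deg Y)m$, in each case.
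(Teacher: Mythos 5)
The paper itself offers no proof of this lemma: it is quoted verbatim from Chen--Coskun--Crissman (their Proposition 3.18), so your argument has to stand on its own. Your overall strategy -- reduce to the maximal subsheaf $F_Y$ of $E=f_*\cO_C$ supported on a proper subcurve $Y$ of the image, bound its Hilbert polynomial by the vanishing forced at the attaching points, and invoke connectedness of the genus-zero domain -- is the right one and does yield the lemma (in fact for every $d$, not just $d\le 3$) once the bookkeeping is done correctly. But two of the intermediate claims you state are false, and they sit exactly at the quantitative step.

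First, the embedding $F_Y|_Y\hookrightarrow \cO_Y(-D)$ with $\deg D\ge n_r$ fails whenever $f$ separates a self-intersection point of $Y$ itself (an ``auxiliary'' point of $Y\cap Y$ rather than of $Y\cap Y^c$, a possibility your dichotomy does not cover). Concretely, let $f(C)$ be a plane triangle of lines $M_1\cup M_2\cup M_3$ with $C$ a chain $L_1$--$L_2$--$L_3$ mapping birationally, and take $Y=M_1\cup M_3$: then $F_Y\cong f_*\bigl(\cO_{L_1}(-1)\oplus\cO_{L_3}(-1)\bigr)$ has $\chi(F_Y(m))=2m$, strictly larger than your bound $(\deg Y)m+1-p_a(Y)-n_r=2m-1$; the point is that $F_Y$ is a module over $f_*\cO_{C_Y}$, which is strictly bigger than $\cO_Y$ at the separated point. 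Second, the assertion that every proper subcurve $Y$ is a connected tree with $p_a(Y)=0$ is false: for a chain of three lines with skew end lines, $Y$ may be the two end lines, which is disconnected with $\chi(\cO_Y)=2$, and there $n_r\ge 1$ is not enough -- you need $n_r\ge 2$. Both defects disappear if you compute upstairs on $C$ instead of on $Y$: writing $C_Y$ (resp.\ $C_{Y^c}$) for the union of components of $C$ mapping into $Y$ (resp.\ the rest), one has $F_Y\cong f_*\bigl(\cO_{C_Y}(-D_C)\bigr)$ with $D_C$ the set of nodes joining $C_Y$ to $C_{Y^c}$, hence exactly $\chi(F_Y(m))=(\deg Y)m+\chi(\cO_{C_Y})-\#D_C$, and connectedness of the genus-zero curve $C$ forces $\#D_C\ge \#\{\text{connected components of }C_Y\}=\chi(\cO_{C_Y})$, so $\chi(F_Y(m))\le(\deg Y)m<(\deg Y)(m+\tfrac1d)$. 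With this replacement (plus the routine verifications, used implicitly, that $E$ is pure with Hilbert polynomial $dm+1$, i.e.\ $R^1f_*\cO_C=0$ and multiplicity one along each component by birationality), your reductions in the first paragraph go through and no case-by-case enumeration is needed at all.
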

For $d=2$, if $f:C\to X$ is a multiple cover,
then the image $f(C)$ has to be a line $L$ in $X$ and $f_*\cO_C\cong \cO_L\oplus \cO_L(-1)$ which is unstable.
Therefore the undefined locus of
the birational map $\phi$ in \eqref{eq-4}
is exactly the locus $\Theta^1 (X)$ of stable maps whose image is a line in $X$.
When $X=\PP^r$,
let us use the natural inclusion
\[
\bM(\PP\cU) \hookrightarrow \bM(\PP^r)
\]
where $\cU$ is the universal rank $2$ bundle over $Gr(2,r+1)$
and $\bM(\PP\cU)$ denotes the relative moduli space of stable maps of degree $2$ to the fibers of $\PP\cU\to Gr(2,r+1)$.
Let
$$\Theta^1 (\PP^r):= \bM(\PP\cU)\times_{\bM(\PP^r)} P_1(\PP^r).$$
If we fix a line $L$ in $\PP^r$ or an inclusion $\CC^2\hookrightarrow\CC^{r+1}$, we have an inclusion
\[ \PP(\Sym^2(\CC^2)\otimes \CC^2)\hookrightarrow
\PP(\Sym^2(\CC^2)\otimes \CC^{r+1})\]
and thus $P_1(\PP^1)\hookrightarrow P_1(\PP^r)$.
This means that $\Theta^1 (\PP^r)$ is a $P_1(\PP^1)$-bundle over $Gr(2,r+1)$.
For a general homogeneous variety $X \subset \PP^r$,
let $F_1(X)$ be the variety of lines in $X$ which is smooth by item (1) of Lemma \ref{assu5-1}.
Let
\[
\Theta^1 (X):= F_1(X) \times_{Gr(2,r+1)} \Theta^1 (\PP^r)
\]
be the fiber product where $F_1(X)\hookrightarrow F_1(\PP^r)=Gr(2,r+1)$ is the natural inclusion.
Then $\Theta^1(X)$ is also a $P_1(\PP^1)$-bundle over $F_1(X)$.
In particular, $\Theta^1(X)$ is a smooth subvariety of $P_1(X)$.
Let $$\Gamma^1(X):=\Theta^1 (X)/SL(2),$$ which is a $\PP^2$-bundle over $F_1(X)$
because $P_1(\PP^1)/SL(2)=\bM(\PP^1)\cong \PP^2$ parameterizes choices of two branch points.
By Remark \ref{rem1.1}, $\Gamma^1(X)$ is in fact a disjoint union of irreducible components $\Gamma^1(X,\beta)\subset \bM(X,\beta)$
where $\Gamma^1(X,\beta)$ is a $\bM(\PP^1)$-bundle over the moduli space $F(X,\frac{\beta}{2})$
of lines in $X$ such that $i_*\frac{\beta}{2}=[\PP^1]\in H_2(\PP^r,\ZZ)$.

For $[f]\in \Theta^1 (X)$ representing a stable map $f:C \lr L \subset X$,
\begin{equation}\label{eq-6}
f_* \cO_C \cong \cO_L \oplus \cO_L(-1)
\end{equation}
as an $\cO_X$-module
where $L$ is a line in $X$ (cf. \cite[Lemma 4.5]{CK}).
To extend the birational map $\overline{\phi}$ in \eqref{eq-5}
we apply a blow-up and an elementary modification of sheaves.
Let
$$
q:P_2(X)\lr P_1(X)
$$
be the blow-up of $P_1(X)$ along $\Theta^1 (X)$.
Let $\Theta_1^1 (X)$ be the exceptional divisor of $\Theta^1 (X)$,
let $\Gamma_1^1(X):=\Theta_1^1 (X)/SL(2)$, and $$\bM_1(X):=P_2(X)/SL(2).$$
The destabilizing quotients $f_*\cO_C\to \cO_L(-1)$ of \eqref{eq-6} form a flat family $\mathcal{A}_1$ over $\Theta^1_1(X)$ by the relative Harder-Narasimhan filtration (\cite[Chapter 2]{HL}).
Let
$$\cE_1(X):= elm_{\Theta_1^1 (X)} ((1_X \times q )^* \cE_0 (X),\mathcal{A}_1)$$
be the elementary modification of the pull-back of $\cE_0(X)$ with respect to $\cA_1$.
For each $z \in \Theta_1^1 (X)$, $\mathcal{A}_1|_{X\times \{z\}} = \cO_L(-1)$
if $q(z)$ represents a stable map $f:C \lr L\subset X$.
\begin{prop}\label{pro1}
$\cE_1(X)$ is stable for every point in $\Theta_1^1(X)$. Hence
there exists a birational morphism
$$p:\bM_1(X) \lr \bS(X),$$
which extends the rational map
$\overline{\phi}:\bM(X)\dashrightarrow \bS(X)$ in \eqref{eq-5}.
\end{prop}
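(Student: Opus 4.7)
\emph{Main approach.} The plan is to establish fiberwise stability of $\cE_1(X)$ on the exceptional divisor $\Theta_1^1(X)$ by a local deformation-theoretic computation, and then to descend the resulting universal morphism by $SL(2)$-equivariance. For $z\in\Theta_1^1(X)$ lying over $q(z)\in\Theta^1(X)$ represented by a stable map $f:C\to L\subset X$, the formalism of elementary modification at a smooth divisor (cf. Example \ref{exa1}) produces a short exact sequence
$$0\to \cO_L(-1)\to \cE_1(X)|_{X\times\{z\}}\to \cO_L\to 0,$$
whose extension class is read off from the normal direction $z\in\PP N_{\Theta^1(X)/P_1(X)}|_{q(z)}$. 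Any non-split extension of this form is isomorphic to the structure sheaf $\cO_{L^2}$ of some double line supported on $L$, which is stable (cf. Example \ref{exa1}); hence it suffices to show non-splitting for every $z\in\Theta_1^1(X)$.

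\emph{Non-splitting via a transversal family.} To verify this, I would pick a smooth analytic curve $\gamma\subset P_1(X)$ through $q(z)$ whose tangent direction realizes $z$ in the projectivized normal bundle. Both $P_1(X)$ and $\Theta^1(X)$ are smooth -- the former by construction and the latter because $F_1(X)$ is smooth by item (1) of Lemma \ref{assu5-1} and $\Theta^1(X)\to F_1(X)$ is a $P_1(\PP^1)$-bundle -- so every normal direction is attained by some such transversal $\gamma$. From the identification $P_1(X)=\bM(X)\times_{\bM(\PP^r)}P_1(\PP^r)$, the curve $\gamma$ is the pullback of a transversal curve in $P_1(\PP^r)$, so generically along $\gamma\setminus\{q(z)\}$ the stable maps are one-to-one onto nondegenerate conics in $X$ and the direct image family degenerates at $t=0$ to $\cO_L\oplus\cO_L(-1)$. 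The elementary modification along $\{0\}\subset\gamma$ then reproduces the local calculation of Example \ref{exa1}, giving the structure sheaf $\cO_{L^2}$ of a double line in the limit, and hence a non-zero extension class.

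\emph{Descent and main obstacle.} Combined with Lemma \ref{dirstab} off the exceptional divisor, fiberwise stability means $\cE_1(X)$ is a flat family of stable sheaves of Hilbert polynomial $2m+1$ on $X$ parametrized by the smooth variety $P_2(X)$. The universal property of $\bS(X)$ then produces a morphism $P_2(X)\to\bS(X)$; since the entire construction is $SL(2)$-equivariant with trivial action on $X$ and on $\bS(X)$, it descends to $p:\bM_1(X)=P_2(X)/SL(2)\to\bS(X)$. Off the exceptional divisor $\cE_1(X)$ coincides with $(1_X\times q)^*\cE_0(X)$, so $p$ extends $\overline\phi$, and birationality follows because $p$ restricts to the identification underlying $\overline\phi$ on the dense open subset $\bR(X,2)$. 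The main obstacle is the non-vanishing claim of the previous paragraph -- equivalently, that the induced map $\PP N_{\Theta^1(X)/P_1(X)}\to\PP\Ext^1_X(\cO_L,\cO_L(-1))$ avoids the zero class -- which is the essential content of the corresponding argument in \cite[\S4]{Kiem} for $\PP^r$ and transfers to $X\subset\PP^r$ because stability of a pure $1$-dimensional sheaf depends only on its Hilbert polynomial with respect to $\cO_X(1)=i^*\cO_{\PP^r}(1)$ and the fiber-product definition of $P_1(X)$ identifies transversals to $\Theta^1(X)$ with transversals to $\Theta^1(\PP^r)$.
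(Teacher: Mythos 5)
Your overall architecture (fiberwise stability on the exceptional divisor via the extension $0\to\cO_L(-1)\to\cE_1(X)|_{X\times\{z\}}\to\cO_L\to 0$, then descent by $SL(2)$-invariance and the universal property of $\bS(X)$) matches the paper, and the descent part is fine. The gap is exactly where you locate your ``main obstacle'': you never prove that every nonzero normal direction produces a \emph{non-split} extension. Your primary argument -- restrict to a transversal analytic curve $\gamma$ and claim the modification ``reproduces the local calculation of Example \ref{exa1}, giving $\cO_{L^2}$ in the limit'' -- is not a proof: Example \ref{exa1} is one explicit planar family with a specific degeneration, and the assertion that for an \emph{arbitrary} point $z$ of the exceptional divisor (equivalently an arbitrary direction in $N_{\Theta^1(X)/P_1(X),q(z)}$, a space of dimension $\dim H^0(N_{L/X}(-1))$) the limit sheaf is non-split is precisely the statement to be established. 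The paper's proof consists of identifying the map from normal directions to extension classes: it computes $T_{q(z)}P_1(X)$ as an open piece of $\cM_0(\PP^1\times X,(1,2))$, derives $N_{\Theta^1(X)/P_1(X),q(z)}\cong H^0(N_{L/X}(-1))\cong\Hom_X(I_{L/X},\cO_L(-1))$ as in \eqref{1103236}, and shows via the Kodaira--Spencer map that the induced map \eqref{eq-7} is the coboundary $\delta$ of $0\to I_{L/X}\to\cO_X\to\cO_L\to 0$, which is an isomorphism because $H^0(\cO_L(-1))=H^1(\cO_L(-1))=0$. None of this appears in your proposal, and without it the non-vanishing is unsupported.

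Your fallback -- transfer the non-vanishing from the $\PP^r$ case of \cite{Kiem} via the fiber-product description of $P_1(X)$ -- cannot be used as stated. To apply the $\PP^r$ result to a direction $v\in N_{\Theta^1(X)/P_1(X),q(z)}$ you need the natural map $N_{\Theta^1(X)/P_1(X)}\to N_{\Theta^1(\PP^r)/P_1(\PP^r)}|_{\Theta^1(X)}$ to be injective, i.e.\ $T_{q(z)}\Theta^1(\PP^r)\cap T_{q(z)}P_1(X)=T_{q(z)}\Theta^1(X)$; this is exactly the clean-intersection statement of Lemma \ref{trans1}, which in the paper is \emph{deduced from} the identification \eqref{1103236} proved inside the proof of Proposition \ref{pro1}, so your transfer is circular unless you prove that identification independently. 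You would also need a compatibility statement identifying the fiber of $\cE_1(X)$ at $z$ with (the restriction to $X$ of) the fiber of $\cE_1(\PP^r)$, i.e.\ that the two elementary modifications agree along the proper transform; this base-change compatibility is plausible but is nowhere argued. A minor additional point: your claim that every non-split extension is $\cO_{L^2}$ for a double line is both stronger than needed (non-split extensions of $\cO_L$ by $\cO_L(-1)$ are stable directly, since $\Hom(\cO_L,E)=0$) and itself requires knowing that nonzero sections of $N_{L/X}(-1)$ are nowhere vanishing, which rests on the splitting type of $N_{L/X}$ (cf.\ Lemma \ref{norline}, proved later in the paper).
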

\begin{proof}
We must show that $\cE_1(X)|_{X\times \{z\}}$ is stable
when $q(z)$ represents  stable map $f:C \lr L \subset X$ where $L$ is a line.
It is well known that the effect of elementary modification is
the interchange of the sub and quotient sheaves (cf. Example \ref{exa1}).
In our case, we claim that $\cE_1(X)|_{X\times \{z\}}$ fits into a non-split short exact sequence
$$ \ses{\cO_L(-1)}{\cE_1(X)|_{X\times\{z\}}}{\cO_L}$$
and therefore it is stable. We will prove this claim by studying the Kodaira-Spencer map of sheaves as follows (cf. \cite[Chapter 10.1]{HL}).

Choosing a vector $v$ in
$$T_{q(z)}P_1(X)=Hom_{\CC}(\mathrm{Spec}\CC[\epsilon]/(\epsilon^2), P_1(X))$$
is equivalent to having a flat family of stable maps over $\mathrm{Spec}\CC[\epsilon]/(\epsilon^2)$
$$ \tilde f: \tilde C:=C\times\mathrm{Spec}\CC[\epsilon]/(\epsilon^2)\lr X $$
whose central fiber is the stable map $f:C \lr L \subset X$.
Then
\begin{equation}\label{eq-100}
\cE_0(X)|_{X\times \mathrm{Spec}\CC[\epsilon]/(\epsilon^2)}= \tilde f_* \cO_{\tilde C}
\end{equation}
on $\tilde X := X\times \mathrm{Spec}\CC[\epsilon]/(\epsilon^2)$.
The elementary modification $\cE_1(X)|_{\tilde X}$
fits into the following diagram of $\cO_{\tilde X}$-modules
\[
\xymatrix{
0 \ar[r] & {\epsilon\cdot f_*\cO_C} \ar[r]\ar@{=}[d] & \cE_1(X)|_{\tilde X}
\ar@{-->}[d]\ar[r]
&\cO_L \ar[r] \ar[d] &0\\
0\ar[r] &\epsilon\cdot f_*\cO_C\ar[r] & \cE_0(X)|_{\tilde X}\ar[r]
&f_*\cO_C \ar[r]&0,
}
\]
where the right vertical map comes from \eqref{eq-6}
and the last term in the second row is $0$ because $R^1f_*\cO_C =0$.
Computing the central fiber
$$\cE_1(X)|_{\tilde X } /\epsilon \cdot\cE_1(X)|_{\tilde X}$$
amounts to calculating the push-out diagram
\[
\xymatrix{
0 \ar[r] & \epsilon\cdot \cO_L(-1) \ar[r] & \cE_1(X)|_{\tilde X}/\epsilon \cdot\cE_1(X)|_{\tilde X}\ar[r]\ar@{<--}[d]
&\cO_L \ar[r] &0\\
0\ar[r] &\epsilon\cdot f_*\cO_C\ar[r] \ar[u] & \cE_1(X)|_{\tilde X}\ar[r]
&\cO_L \ar[r] \ar@{=}[u]&0,
}
\]
where the first vertical map comes from \eqref{eq-6} again.
These operations are represented by $\CC$-linear maps
\begin{equation}\label{1103234}
KS: T_{q(z)}P_1(X) \lr  \Ext^1_{X}(f_*\cO_C,f_*\cO_C) \lr \Ext^1_{X}(\cO_L, f_*\cO_C) \lr \Ext^1_{X}(\cO_L,\cO_L(-1))
\end{equation}
whose composition sends $ v\mapsto \cE_1(X)|_{\tilde X } /\epsilon \cdot\cE_1(X)|_{\tilde X}$.
The first map of $KS$ is exactly the Kodaira-Spencer map of sheaves \cite[Chapter 10.1]{HL}
and the others come from \eqref{eq-6}.

On the other hand, since $P_1(X)$ is open in $\cM_0(\PP^1\times X,(1,2))$,  its tangent space $T_{P_1(X),q(z)}$
at $q(z)=(f:C \lr L \subset X)$ is isomorphic to
\[ \Ext^1([f^*\Omega_{\PP^1\times X}\to \Omega_C], \cO_C) \]
which fits into the exact sequence
\begin{equation}\label{1103232} 0\lra \Ext^0(\Omega_C,\cO_C) \lra H^0(f^*T_{\PP^1\times X})\lra T_{P_1(X),q(z)}\lra \Ext^1(\Omega_C,\cO_C)\lra 0.\end{equation}
Likewise, the tangent space to the fiber $P_1(\PP^1)=P_1(L)$ of $\Theta^1(X)\to F_1(X)$ over $L$ is isomorphic to
\[ \Ext^1([f^*\Omega_{\PP^1\times L}\to \Omega_C], \cO_C) \]
which fits into the exact sequence
\begin{equation}\label{1103233} 0\lra \Ext^0(\Omega_C,\cO_C) \lra H^0(f^*T_{\PP^1\times L})\lra T_{P_1(L),q(z)}\lra \Ext^1(\Omega_C,\cO_C)\lra 0.\end{equation}
From \eqref{1103232} and \eqref{1103233}, we find that
\[ T_{P_1(X),q(z)}/T_{P_1(L),q(z)}\cong H^0(f^*N_{L/X})\cong H^0(N_{L/X})\oplus H^0(N_{L/X}(-1))\]
by the projection formula, where $N_{L/X}$ denotes the normal bundle of $L$ in $X$.
Since the tangent space to $F_1(X)$ is $H^0(N_{L/X})$, by taking further quotient by $H^0(N_{L/X})$, we obtain
\begin{equation}\label{1103236} N_{\Theta^1(X)/P_1(X),q(z)}=H^0(N_{L/X}(-1))\cong \Hom_X(I_{L/X},\cO_L(-1))\end{equation}
by $N_{L/X}^\vee=I_{L/X}/I_{L/X}^2$ where $I_{L/X}$ is the ideal sheaf of $L$ in $X$.

Obviously moving in $P_1(L)$ doesn't change the sheaf $f_*\cO_C=\cO_L\oplus \cO_L(-1)$ and the deformation space $H^0(N_{L/X})\cong \Ext^1_X(\cO_L,\cO_L)$ of $L$ is mapped to zero by the last arrow of \eqref{1103234}. Therefore the map $KS$ descends to an isomorphism
\begin{equation}\label{eq-7}
\overline{KS}:N_{\Theta^1(X)/P_1(X),q(z)}=\Hom_{X}(I_{L/X},\cO_L(-1)) \mapright{\delta} \Ext^1_{X}(\cO_L,\cO_L(-1))
\end{equation}
which is exactly the coboundary map $\delta$ of the short exact sequence $$\ses{I_{L/X}}{\cO_X}{\cO_L}$$ by direct inspection (cf. \cite[Lemma 4.6]{CK}).
Of course, $\delta$ is an isomorphism because $H^i(\cO_L(-1))=0$ for $i=0,1$.

In summary, the image $\overline{KS}(v)=\cE_1(X)|_{X\times \{z\}}$ of $v\neq 0$ for $z\in \Theta_1^1(X)$ is exactly a non-split extension class in $\Ext^1_{X}(\cO_L,\cO_L(-1))$ and thus it is stable.
Hence there exists a morphism $P_2(X)\lr \bS(X)$ by the universal
property of $\bS(X)$, which is $SL(2)$-invariant by construction.
Therefore $P_2(X)\lr \bS(X)$ descends to a birational morphism $\bM_1(X) \lr \bS(X)$.
\end{proof}
%\begin{rema} If $X=\PP^r$, we can calculate explicitly all stable sheaves in the family $\cE_1(\PP^r)$ by the same method as in \cite[Example 4.7]{CK}.\end{rema}

\begin{rema}\label{rem3.5} Since, for any double covering map $f:\PP^1 \lr L\subset X$ of a line $L$,
$$H^1(f^*T_X)=H^1(T_X \otimes f_*\cO_{\PP^1})=H^1(T_X|_L \oplus T_X|_L(-1))=0 $$
by the projection formula and item (1) in Lemma \ref{assu5-1}, we have $H^1(T_X|_L(-1))=0$ and hence $H^1(N_{L/X}(-1))=0$. By Riemann-Roch,
  \[\dim \Hom_{X}(I_{L/X},\cO_L(-1))=\dim H^0(N_{L/X}(-1)) = \int_{\frac{\beta}{2}} c_1 (T_X)-2\]
when $i_*\frac{\beta}{2}=[\PP^1]\in H_2 (\PP^r,\ZZ)$.
Hence the linear image locus $\Gamma^1(X,\beta)$ in the irreducible component $\bM(X, \beta)$
is a pure dimensional subvariety.
\end{rema}

%\begin{rema}\label{rem3.5} Since $H^1(\PP^1,\cO_{\PP^1}(1))=H^1(\PP^1,\cO_{\PP^1}(-1))=0$, %$$H^1(T_X|_L(-1))=H^1(T_L(-1))=0$$ as $T_X$ is globally generated. Hence $H^1(N_{L/X}(-1))=0$ and by Riemann-Roch
%  \[dim \Hom_{X}(I_{L/X},\cO_L(-1))=\dim H^0(N_{L/X}(-1)) = \int_{\frac{\beta}{2}} c_1 (T_X)-2\]
%when $i_*\frac{\beta}{2}=[\PP^1]\in H_2 (\PP^r,\ZZ)$.
%Hence the linear image locus $\Gamma^1(X,\beta)$ in the irreducible component $\bM(X, \beta)$
%is a pure dimensional subvariety.
%\end{rema}

Next we claim that the blow-up morphism $$q:P_2(X)\lr P_1(X)$$
is just the proper transform of $P_1(X)$ by the blow-up morphism $P_2(\PP^r)\lr P_1(\PP^r)$.
\begin{defiprop}\cite[Lemma 5.1]{LLi}\label{def-pro}
Let $A$ and $B$ be smooth closed subvarieties of a nonsingular variety $P$ and let
$U$ be the set-theoretic intersection of $A$ and $B$ (i.e. $U$ is the reduced scheme of the fiber product $A\times _P B$). Suppose $U$ is also smooth. If
\[T_u U= T_u A \cap T_u B \]
for all $u\in U$, then we call $A$ and $B$ \emph{intersect cleanly} along $U$ in $P$. Then the following hold.
\begin{enumerate}
  \item $U$ is the scheme theoretic intersection in the sense that
  $I_A + I_B =I_U.$
  \item  The smooth blow-up of $A$ along $U$ is just the proper transformation of $A$ along the smooth blow-up morphism $bl_B P \lr P$.
\end{enumerate}
\end{defiprop}
\begin{lemm}\label{trans1}
$P_1(X)$ intersects with $\Theta^1(\PP^r)$ cleanly
along $\Theta^1(X)$ in $P_1(\PP^r)$. Hence $P_2(X)$ is the proper transform of $P_1(X)$ via the blow-up $P_2(\PP^r)\to P_1(\PP^r)$.
\end{lemm}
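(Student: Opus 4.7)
The plan is to verify the hypotheses of Definition-Proposition \ref{def-pro} for the triple $A=P_1(X)$, $B=\Theta^1(\PP^r)$, $P=P_1(\PP^r)$ with candidate intersection $U=\Theta^1(X)$, and then invoke part (2) of that statement to conclude that $P_2(X)$ is the proper transform of $P_1(X)$ under the blow-up $P_2(\PP^r)\to P_1(\PP^r)$.

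First I would check the set-theoretic identity $\Theta^1(X)=P_1(X)\cap\Theta^1(\PP^r)$ in $P_1(\PP^r)$. A point $z\in P_1(\PP^r)$ represents a stable map $C\to\PP^1\times\PP^r$ of bidegree $(1,2)$, so $z$ lies in $P_1(X)$ iff its $\PP^r$-component factors through $X$, and in $\Theta^1(\PP^r)$ iff that component has image a line in $\PP^r$; the intersection is therefore exactly the locus of stable maps whose $\PP^r$-image is a line contained in $X$, i.e.\ $\Theta^1(X)$. The smoothness of the three subvarieties was already established in \S\ref{sec2.1}: $P_1(X)$ is open inside the smooth variety $\cM_0(\PP^1\times X,(1,2))$ by convexity (Lemma \ref{assu5-1}(1)), while $\Theta^1(\PP^r)$ and $\Theta^1(X)$ are $P_1(\PP^1)$-bundles over $Gr(2,r+1)$ and over the smooth variety $F_1(X)$ respectively.

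The substantive step is the clean tangent condition
\[ T_z\Theta^1(X)=T_zP_1(X)\cap T_z\Theta^1(\PP^r)\quad\text{for every }z\in\Theta^1(X), \]
which is equivalent to the injectivity of the induced map on normal spaces $N_{\Theta^1(X)/P_1(X),z}\to N_{\Theta^1(\PP^r)/P_1(\PP^r),z}$. The calculation \eqref{1103236} in the proof of Proposition \ref{pro1} was carried out for general $X$, and the identical argument with $X$ replaced by $\PP^r$ yields $N_{\Theta^1(\PP^r)/P_1(\PP^r),z}\cong H^0(N_{L/\PP^r}(-1))$. Crucially, the coboundary identification $\overline{KS}=\delta$ in \eqref{eq-7} is functorial in the target, so the map between the two normal spaces is exactly the one induced on $H^0(\,\cdot\,(-1))$ by the subbundle inclusion $N_{L/X}\hookrightarrow N_{L/\PP^r}$ coming from the short exact sequence
\[ 0\lra N_{L/X}\lra N_{L/\PP^r}\lra N_{X/\PP^r}|_L\lra 0. \]
Tensoring with $\cO_L(-1)$ and passing to cohomology makes $H^0(N_{L/X}(-1))\hookrightarrow H^0(N_{L/\PP^r}(-1))$ injective, which is exactly the clean intersection condition.

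The main (and really only) obstacle is the naturality of the normal-space identification \eqref{1103236} in the target $X$: one must unwind the Kodaira--Spencer computation behind it and verify that the coboundary $\delta$ is compatible with the inclusion $X\hookrightarrow\PP^r$. Once this naturality is in hand, Definition-Proposition \ref{def-pro} applies verbatim and yields both the scheme-theoretic equality $\Theta^1(X)=P_1(X)\cap\Theta^1(\PP^r)$ and the identification of $P_2(X)$ with the proper transform of $P_1(X)$ under the smooth blow-up $P_2(\PP^r)\to P_1(\PP^r)$.
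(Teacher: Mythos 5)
Your proposal is correct and follows essentially the same route as the paper: the set-theoretic and smoothness statements are handled from the fiber-product/bundle descriptions, and the clean tangent condition is reduced to injectivity of the map on normal spaces, which via the identification \eqref{1103236} is the inclusion $H^0(N_{L/X}(-1))\subset H^0(N_{L/\PP^r}(-1))$ induced by $N_{L/X}\hookrightarrow N_{L/\PP^r}$; the paper then concludes by the same diagram chase and Definition-Proposition \ref{def-pro}. The naturality point you flag is indeed the only thing the paper leaves implicit (it follows from functoriality of the tangent-space sequences \eqref{1103232}--\eqref{1103233} in the target), so your account matches the published argument.
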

\begin{proof}
Clearly, set theoretic intersection $P_1(X)$ and $\Theta^1(\PP^r)$ in $P_1(X)$ is $\Theta^1(X)$
because of the universal property of the fiber product.
Moreover recall that $\Theta^1(X)$ is a $P_1(\PP^1)$-bundle over $F_1(X)$ and thus $\Theta^1(X)$ is smooth.
On the other hand, by \eqref{1103236}, the inclusion $N_{L/X}\subset N_{L/\PP^r}$ induces an inclusion
\begin{equation}\label{2022}
N_{\Theta^1(X)/P_1(X),q(z)}\cong H^0(N_{L/X}(-1))\subset H^0(N_{L/\PP^r}(-1))\cong N_{\Theta^1(\PP^r)/P_1(\PP^r),q(z)},
\end{equation}
where $ z\in \Theta_1^1(X)$ and $q: P_2(X) \lr P_1(X) $ is the blow-up morphism.
From the commutative diagram of exact sequences
\[
\xymatrix{
0\ar[r]&T_{q(z)}\Theta^1(X)\ar[r]\ar@{^(->}[d]& T_{q(z)} P_1(X)\ar[r]\ar@{^(->}[d]& N_{\Theta^1(X)/P_1(X),q(z)}\ar[r]\ar@{^(->}[d]&0\\
0\ar[r]&T_{q(z)}\Theta^1(\PP^r)\ar[r]& T_{q(z)} P_1(\PP^r)\ar[r]& N_{\Theta^1(\PP^r)/P_1(\PP^r),q(z)}\ar[r]&0,\\
}
\]
and the injectivity of the last vertical arrow \eqref{2022}, we find immediately that
$$T_{q(z)}\Theta^1(X)= T_{q(z)} P_1(X)\cap T_{q(z)}\Theta^1(\PP^r).$$
The lemma now follows from Definition-Proposition \ref{def-pro}.
\end{proof}

%%%%%%%%%%%%%%%%%%%%%%%%

\subsection{Blow-down}\label{sec2.2}
We show that the birational morphism $ p: \bM_1(X) \lr \bS(X)$
in Proposition \ref{pro1} is a smooth blow-up morphism
by analyzing a neighborhood of the exceptional divisor
$\Gamma_1^1(X)=\Theta_1^1 (X)/SL(2)$.
Let $\Gamma^1_2(X)=p(\Gamma^1_1(X))$.
\begin{prop}
 $p:\bM_1(X) \lr \bS(X)$ is the smooth blow-up morphism along $\Gamma^1_2(X)$.
\end{prop}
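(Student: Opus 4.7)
The plan is to mirror the clean-intersection strategy behind Lemma \ref{trans1}, reducing to the known case $X = \PP^r$ via Theorem \ref{th1.1}(2). By the proof of Proposition \ref{pro1}, every point of $\Gamma^1_2(X) := p(\Gamma^1_1(X))$ is represented by a non-split extension $0 \lr \cO_L(-1) \lr E \lr \cO_L \lr 0$ for some line $L \subset X$; the stable extensions for each $L$ form $\PP\Ext^1_X(\cO_L, \cO_L(-1)) \cong \PP^{t-1}$, where $t = \int_{\frac{\beta}{2}} c_1(T_X) - 2$ is constant along each component of $F_1(X)$ by Remark \ref{rem3.5}. Hence $\Gamma^1_2(X)$ is a $\PP^{t-1}$-bundle over the smooth variety $F_1(X)$, in particular smooth; moreover $\bS(X) \cap \Gamma^1_2(\PP^r) = \Gamma^1_2(X)$ set-theoretically, since a linear-support stable sheaf on $\PP^r$ lies in $\bS(X)$ exactly when its supporting line lies in $X$.

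Next I verify that $\bS(X)$ and $\Gamma^1_2(\PP^r)$ meet cleanly along $\Gamma^1_2(X)$ inside $\bS(\PP^r)$ in the sense of Definition-Proposition \ref{def-pro}. For either $Y = X$ or $Y = \PP^r$, the tangent space $T_{[E]}\Gamma^1_2(Y)$ at a point $[E]$ supported on $L$ fits into a natural exact sequence involving $T_{[L]} F_1(Y) \cong H^0(N_{L/Y})$ and $\Ext^1_Y(\cO_L, \cO_L(-1))$, while $T_{[E]}\bS(Y) = \Ext^1_Y(E, E)$. A first-order deformation of $E$ as a $\PP^r$-line-supported extension amounts to deforming the pair $(L, \epsilon)$ of line and extension class, and such a deformation is realized on $X$ precisely when the line deformation lies in $H^0(N_{L/X}) \subset H^0(N_{L/\PP^r})$ and the extension class deformation lies in $\Ext^1_X(\cO_L, \cO_L(-1)) \subset \Ext^1_{\PP^r}(\cO_L, \cO_L(-1))$. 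These conditions together cut out exactly $T_{[E]}\Gamma^1_2(X)$, yielding the clean intersection.

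By Theorem \ref{th1.1}(2), the morphism $\bM_1(\PP^r) \to \bS(\PP^r)$ is the smooth blow-up along $\Gamma^1_2(\PP^r)$. The inclusion $X \hookrightarrow \PP^r$ induces a closed embedding $\bS(X) \hookrightarrow \bS(\PP^r)$, and Lemma \ref{trans1} together with compatibility of the $SL(2)$-quotients gives a closed embedding $\bM_1(X) = P_2(X)/SL(2) \hookrightarrow P_2(\PP^r)/SL(2) = \bM_1(\PP^r)$, so that $p$ is the restriction of $\bM_1(\PP^r) \to \bS(\PP^r)$. Since $\bM_1(X)$ contains the open dense locus $\bM(X) \setminus \Gamma^1(X)$ mapping isomorphically onto $\bS(X) \setminus \Gamma^1_2(X)$, it is not contained in the exceptional divisor of $\bM_1(\PP^r) \to \bS(\PP^r)$, and hence equals the proper transform of $\bS(X)$. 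The clean intersection verified above together with Definition-Proposition \ref{def-pro}(2) then identifies this proper transform with the smooth blow-up of $\bS(X)$ along $\Gamma^1_2(X)$, so $p$ is this blow-up. The main technical obstacle is the clean-intersection tangent-space check, which requires carefully comparing the deformation theory of the singular line-supported sheaf $E$ on $X$ with its deformation theory on $\PP^r$.
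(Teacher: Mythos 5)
Your overall strategy (reduce to the known case $X=\PP^r$ via Theorem \ref{th1.1}(2) and a clean-intersection/proper-transform argument) is a genuinely different route from the paper's, but as written it has a circularity that I do not see how to remove at this stage. To invoke Definition-Proposition \ref{def-pro} with $A=\bS(X)$, $B=\Gamma^1_2(\PP^r)$, $P=\bS(\PP^r)$ you need $\bS(X)$ to be a \emph{smooth} closed subvariety of $\bS(\PP^r)$ near the boundary points $[E]\in\Gamma^1_2(X)$, and you need to know its tangent spaces there. Neither is available: $\bS(X)$ is defined as a closure of the locus of smooth conics inside the Simpson moduli space, so at a double-line sheaf $[E]$ its smoothness is unknown and its tangent space is only \emph{contained} in $\Ext^1_X(E,E)$; your identification $T_{[E]}\bS(Y)=\Ext^1_Y(E,E)$ is unjustified for $Y=X$ (it happens to hold for $\PP^r$ only because $\bS(\PP^r,2)\cong\bH(\PP^r,2)$ is the smooth space of conics). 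Moreover your tangent-space computation argues with first-order deformations ``realized on $X$,'' but tangent vectors to a closure at a boundary point need not be realized by families inside the closure, so the inclusion $T_{[E]}\bS(X)\cap T_{[E]}\Gamma^1_2(\PP^r)\subset T_{[E]}\Gamma^1_2(X)$ is exactly the hard, unverified step. In the paper, smoothness of $\bS(X,2)$ along $\Gamma^1_2(X)$ is a \emph{consequence} of this proposition, not an input, so assuming it here is circular. (Note that when the paper does run a clean-intersection argument downstairs, in \S\ref{sec3.2} with $\Delta(\PP^r)$ and $\bS(X,3)$, it first proves smoothness of $\bS(X,3)$ in Proposition \ref{306}; for $d=2$ no such statement precedes this proposition.)

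The paper avoids this by working entirely on the resolved side: $\bM_1(X)$ is smooth (the blow-up resolves the $\ZZ_2$-quotient singularities along $\Gamma^1(X)$), and one applies the Fujiki--Nakano criterion to $p$ by checking (i) $p:\Gamma^1_1(X)\to\Gamma^1_2(X)$ is a $\PP^2$-bundle, which follows from the proof of Proposition \ref{pro1} since the $\PP^2$-direction only changes the double cover of $L$ while the $\PP\Ext^1_X(\cO_L,\cO_L(-1))$-direction distinguishes the extension sheaves, and (ii) the normal bundle of $\Gamma^1_1(X)$ restricts to $\cO_{\PP^2}(-1)$ on each fiber, which is inherited from the $\PP^r$ case of \cite{Kiem} because, by Lemma \ref{trans1}, $N_{\Gamma^1_1(X)/\bM_1(X)}$ is the restriction of $N_{\Gamma^1_1(\PP^r)/\bM_1(\PP^r)}$. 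The only clean-intersection statement needed is the one \emph{upstairs} (Lemma \ref{trans1}), where smoothness and tangent spaces are accessible through the deformation theory of stable maps. If you want to salvage your argument, you would first have to establish smoothness of $\bS(X,2)$ and compute $T_{[E]}\bS(X)$ at the double-line sheaves independently (e.g.\ via $\bS(X,2)\cong\bH(X,2)$ and the deformation theory of double lines in $X$), which is a nontrivial addition not contained in your proposal; the Fujiki--Nakano route delivers both the blow-up statement and the smoothness of $\bS(X)$ simultaneously.
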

\begin{proof}
If $f:C\to L\subset X$ is represented by an element in $\Theta^1(X)$, the automorphism group is $\ZZ_2$ and thus
$\bM(X)$ has $\ZZ_2$-quotient singularities along the blow-up center
$\Gamma^1 (X)$ by Proposition \ref{litian}. Therefore if we blow up $\bM(X)$ along
$\Gamma^1 (X)$, then the singularity is resolved (\cite[\S3]{Kiem})
and hence $\bM_1 (X)$ is smooth. We have seen that $\Gamma^1(X)$ is a $\PP^2$-bundle over $F_1(X)$ and the normal bundle to $\Theta^1(X)$ is
independent of the $\PP^2$-directions by \eqref{eq-7}. Therefore the exceptional divisor $\Gamma^1_1(X)$ in $\bM_1(X)$ is a $\PP^2$-bundle
over a $\PP(\Ext_X^1(\cO_L,\cO_L(-1)))$-bundle over $F_1(X)$. By the Fujiki-Nakano criterion \cite{Fujiki}, it suffices to show that \begin{enumerate}
\item $p:\Gamma_1^1(X)\to \Gamma_2^1(X)$ is a projective bundle with fiber $\PP^2$;
\item the restriction of the normal bundle of $\Gamma_1^1(X)$ to each fiber $\PP^2$ is $\cO_{\PP^2}(-1)$.\end{enumerate}

Now item (1) is a direct consequence of our proof of Proposition \ref{pro1}. Note that the $\PP^2$ direction in $\Gamma^1_1(X)$ tells us only about the double cover of the image line $L$ while the $\PP N_{\Theta^1(X)/P_1(X),q(z)}$ direction gives all distinct extension sheaves of $\cO_L$ by $\cO_L(-1)$.

When $X=\PP^r$, this proposition was proved in \cite{Kiem}.
For $X\subset \PP^r$, by Lemma \ref{trans1}, the normal bundle of $\Gamma^1_1(X)$ in $\bM_1(X)$ is the restriction of that of $\Gamma^1_1(\PP^r)$ in $\bM_1(\PP^r)$. Therefore, we see that (2) holds for $X$ as well.
\end{proof}

In summary, we have a blow-up/down diagram
which generalizes Theorem 4.1 and Proposition 4.3 in \cite{Kiem}.
\begin{theo}\label{thm1} For a projective homogeneous variety $X$ in $\PP^r$,
$\bM(X)=\bM(X,2)$ and $\bS(X)=\bS(X,2)$ are related by blow-ups as follows:
$$
\xymatrix{ & \bM_1(X)\ar[ld]_{\Gamma^1(X)} \ar[rd]^{\Gamma^1_2(X)}& \\
\bM(X)&&\bS(X).}
$$
Here $\Gamma^1(X)$ and $\Gamma^1_2(X)$ indicate the blow-up centers.
\end{theo}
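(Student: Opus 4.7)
The plan is to assemble the blow-up/blow-down diagram by combining the two main constructions of \S\ref{sec2.1} and \S\ref{sec2.2}. The left arrow $\bM_1(X) \to \bM(X)$ arises as an $SL(2)$-quotient of a smooth blow-up on the smooth principal-bundle cover $P_1(X)$, while the right arrow $\bM_1(X) \to \bS(X)$ is provided directly by Proposition \ref{pro1} together with the Proposition immediately preceding the theorem.

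For the left arrow, I will identify $\bM_1(X) = P_2(X)/SL(2)$ where $q:P_2(X) \to P_1(X)$ is the smooth blow-up of the smooth variety $P_1(X)$ along the smooth subvariety $\Theta^1(X)$. Smoothness of $\Theta^1(X)$ follows from its description as a $P_1(\PP^1)$-bundle over $F_1(X)$, and the smoothness of $F_1(X)$ is guaranteed by item (1) of Lemma \ref{assu5-1}. Since $P_1(X) \to \bM(X)$ is \'etale-locally a principal $SL(2)$-bundle by Luna's slice theorem, the blow-up descends to a smooth blow-up $\bM_1(X) \to \bM(X)$ along $\Gamma^1(X) = \Theta^1(X)/SL(2)$, which is precisely the locus of stable maps whose image is a line in $X$.

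For the right arrow, I will combine Proposition \ref{pro1}, which constructs the birational morphism $p:\bM_1(X) \to \bS(X)$ via elementary modification of the universal family $\cE_0(X)$ along the exceptional divisor $\Theta^1_1(X)$, with the Proposition asserting that $p$ is a smooth blow-up along $\Gamma^1_2(X) = p(\Gamma^1_1(X))$. The main technical input, already dispatched there, is verifying the two Fujiki--Nakano hypotheses: the $\PP^2$-bundle structure of $p:\Gamma^1_1(X) \to \Gamma^1_2(X)$, where the $\PP^2$ direction parameterizes double covers of the image line and the base parameterizes extension classes in $\PP\Ext^1_X(\cO_L,\cO_L(-1))$, and the identification of the normal bundle restriction as $\cO_{\PP^2}(-1)$. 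The chief subtlety at the sheaf-theoretic level was already resolved by showing that the descended Kodaira--Spencer map $\overline{KS}$ is an isomorphism onto $\Ext^1_X(\cO_L,\cO_L(-1))$, for which the convexity hypothesis and the vanishing $H^i(\cO_L(-1))=0$ were essential; at the normal bundle level, the clean intersection result of Lemma \ref{trans1} lets us reduce the computation to the known case $X = \PP^r$ of \cite{Kiem}. Assembling these ingredients yields the desired diagram.
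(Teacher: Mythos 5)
Your reading of the right-hand arrow is exactly the paper's: Proposition \ref{pro1} produces $p\colon \bM_1(X)\to\bS(X)$ via elementary modification along $\Theta^1_1(X)$, and the Proposition of \S\ref{sec2.2} verifies the two Fujiki--Nakano hypotheses, with the $\PP^2$-bundle structure read off from the proof of Proposition \ref{pro1} and the normal-bundle computation reduced to the case $X=\PP^r$ of \cite{Kiem} through the clean-intersection Lemma \ref{trans1}. So that half of the diagram is assembled correctly and by the same route as the paper.

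The gap is in your justification of the left-hand arrow. You assert that, because $P_1(X)\to\bM(X)$ is \'etale-locally a principal $SL(2)$-bundle, the blow-up $P_2(X)\to P_1(X)$ ``descends to a smooth blow-up'' of $\bM(X)$ along $\Gamma^1(X)$. But exactly along the center $\Theta^1(X)$ the action is not free: a stable map doubly covering a line has automorphism group $\ZZ_2$, so the corresponding points carry $\ZZ_2$-stabilizers and $\bM(X)$ has $\ZZ_2$-quotient singularities along $\Gamma^1(X)$ (this is used in the proof of the \S\ref{sec2.2} Proposition). Consequently ``smooth blow-up'' is not even the right statement---the base is singular along the center---and the free-descent argument fails precisely where it is needed; in general, quotienting an equivariant blow-up by an action with stabilizers along the center yields only a weighted blow-up of the quotient (compare the $d=3$ case, where the induced maps are described as ``(weighted) blow-ups''). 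The paper closes this point by citing Kirwan \cite{Kirwan} for the fact that $P_2(X)/SL(2)\to P_1(X)/SL(2)=\bM(X)$ is a blow-up, and \cite[\S3]{Kiem} for the fact that this blow-up resolves the $\ZZ_2$-singularities, so that $\bM_1(X)$ is smooth---smoothness which is in any case a prerequisite for applying Fujiki--Nakano on the other side. Your proposal needs this input (or an explicit local analysis of the $\ZZ_2$-action near $\Theta^1(X)$) to legitimize the left arrow; as written, the descent step is unsupported.
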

\begin{rema}
Since we used only item (1) in Lemma \ref{assu5-1} to prove Theorem \ref{thm1},
Theorem \ref{thm1} holds for any convex variety $X$.
\end{rema}
%%%%%%%%%%%%%%%%%%%%%%%%%%%%%%%%%%%%%%%%%%%%%%%%%%%%%%%%%%%%%%%%%%%%%%%%%%%%%%%%%%%%%%%%%%%%%%%%%%%%%
%                                              section3                                             %
%                                                                                                   %
%%%%%%%%%%%%%%%%%%%%%%%%%%%%%%%%%%%%%%%%%%%%%%%%%%%%%%%%%%%%%%%%%%%%%%%%%%%%%%%%%%%%%%%%%%%%%%%%%%%%%

\section{Comparison results for $d=3$} \label{sec3}
Let $X$ be a projective homogeneous variety over $\CC$ with fixed embedding $i:X\hookrightarrow \PP^r$.
In \S\ref{sec3.1}, we will use properties (1) and (2) of Lemma \ref{assu5-1} only.
But, in \S\ref{sec3.2}, we will use all items of Lemma \ref{assu5-1}.
In this section we fix $d=3$
and compare the compactifications $\bM(X),\bS(X)$
and $\bH(X)$ by sequences of blow-ups. Let $L$ be a line in $X$
and let $$t:=\dim \Ext_X^1(\cO_L, \cO_L(-1))$$
be the dimension of the moduli space $F_1(X,x)$ of lines
which pass through a given point $x$ in $X$ (cf. \cite[Theorem 1.7, II]{Kollar}).
Note that $t$ depends only on $\beta$ such that $i_*\frac{\beta}{3}=[\PP^1] \in H_2 (\PP^r,\ZZ)$ (cf. Remark \ref{rem3.5}).
%%%%%%%%%%%%%%%%%%%%%%%%
\subsection{Comparison of $\bM(X)$ and $\bS(X)$}\label{sec3.1}
In this subsection we will generalize the comparison result \cite[Theorem 1.4]{CK} to arbitrary homogeneous projective varieties. The strategy is the same as in the degree 2 case above:
\begin{enumerate}\item Blow up components of the locus of unstable sheaves.
\item Apply elementary modification to make sheaves stable.
\item Analyze neighborhoods of the exceptional divisors to factorize the morphism to $\bS(X)$.
\end{enumerate}
We will use only (1) and (2) of Lemma \ref{assu5-1} in this subsection.

As in \S\ref{sec2}, we begin with a description of $\bM(X)$ as the GIT quotient of a smooth quasi-projective variety.
\begin{theo}\cite{Parker} $\bM(\PP^r)$ is the GIT quotient of the moduli space $$\cM_0(\PP^1\times\PP^r,(1,3))$$ of stable maps to $\PP^1\times\PP^r$ of genus $0$ and bidegree $(1,3)$ by $SL(2)$ with respect to a suitable linearization. Here the action of $SL(2)$ on $\cM_0(\PP^1\times\PP^r,(1,3))$ is induced from the standard action on $\PP^1$ and trivial action on $\PP^r$.
\end{theo}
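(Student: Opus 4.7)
The plan is to construct an $SL(2)$-invariant morphism $\Psi$ from $\cM_0(\PP^1\times\PP^r,(1,3))$ to $\bM(\PP^r)$ and show that it descends to an isomorphism from the GIT quotient onto $\bM(\PP^r)$. The map $\Psi$ is defined by post-composing with the projection $\PP^1\times\PP^r\to\PP^r$ and stabilizing: concretely, for a stable map $g:C\to\PP^1\times\PP^r$ of bidegree $(1,3)$, take the second component $g_2:C\to\PP^r$ and contract all components of $C$ that become unstable after composition (i.e.\ carry at most two special points), yielding a genuine stable map of degree $3$ to $\PP^r$. Since $SL(2)$ acts trivially on $\PP^r$, the composition $\Psi$ is manifestly $SL(2)$-invariant, so it factors through the GIT quotient and gives a morphism $\bar{\Psi}: \cM_0(\PP^1\times\PP^r,(1,3))^{ss}/\!\!/SL(2)\to\bM(\PP^r)$.

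First I would pin down the correct linearization. The natural candidates are pullbacks of ample line bundles on $\cM_0(\PP^1\times\PP^r,(1,3))$ expressed in terms of tautological $\psi$-classes and the evaluation classes from the two factors; the weight on the $\PP^1$-factor is what one tunes. The Hilbert--Mumford criterion then characterizes (semi)stability in terms of which components get contracted by destabilizing $1$-parameter subgroups, and one chooses the linearization so that the semistable locus coincides with the preimage of the domain of definition of the stabilization used to define $\Psi$. Because $\PP^1\times\PP^r$ is convex, $\cM_0(\PP^1\times\PP^r,(1,3))$ is a normal projective variety, and its open locus of automorphism-free stable maps is smooth by \cite[Theorem 2]{FP}, which is the key smoothness input needed below.

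Next I would compute the fibers of $\bar{\Psi}$. For a stable map $f:C\to\PP^r$ of degree $3$ with $C\cong\PP^1$, the data of a lift to $\cM_0(\PP^1\times\PP^r,(1,3))$ is a degree-$1$ morphism $g:C\to\PP^1$, that is, an element of $PGL(2)$, acted on simply transitively by $SL(2)/\{\pm 1\}$. The stabilizer at a point of the fiber is precisely $\mathrm{Aut}(f)$ mapped into $PGL(2)$, which matches the natural orbifold structure on $\bM(\PP^r)$ at $[f]$. For reducible $C$, the degree-$1$ map $g$ restricts to degree $1$ on exactly one component and degree $0$ on the rest; to rigidify automorphisms permuting components of $C$ one inserts chains of $\PP^1$'s so that $g$ becomes constant on the original components and degree $1$ on a new tail. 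A case analysis over the stratification of $\bM(\PP^r)$ by dual graph type then shows that in every case the fiber is a single $SL(2)$-orbit whose stabilizer matches the automorphism group of $f$.

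The main obstacle is this last case analysis at the boundary, especially where automorphism groups jump (triple covers of a line, three concurrent lines, reducible configurations with symmetric dual graphs, etc.); the combinatorics of which components need to be inserted or contracted, together with the corresponding verification of GIT (semi)stability for the chosen linearization, is the delicate part. Once fibers are identified and $\bar{\Psi}$ is shown to be quasi-finite and birational, the conclusion follows exactly as in the degree $2$ case discussed before Theorem \ref{thm1}: $\bar\Psi$ is a bijective birational morphism onto the normal variety $\bM(\PP^r)$, hence an isomorphism by Zariski's Main Theorem. Equivalently, Luna's slice theorem applied to the smooth automorphism-free locus gives an \'etale-local principal $SL(2)$-bundle structure on $\cM_0(\PP^1\times\PP^r,(1,3))^{s}\to\bM(\PP^r)$, from which the desired isomorphism between the GIT quotient and $\bM(\PP^r)$ follows directly.
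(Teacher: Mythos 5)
First, be aware that the paper does not prove this statement at all: it is imported verbatim from Parker \cite{Parker}, and the surrounding text only uses its consequences (the absence of strictly semistable points, again quoted from \cite{Parker}, and the smoothness of the stable locus via \cite{KM} and \cite{FP}). So your proposal has to stand on its own as a proof of Parker's theorem, and as such it is a strategy outline rather than a proof. The actual mathematical content of the statement is the GIT part, and that is exactly what you defer: you never exhibit a concrete linearization, never run the Hilbert--Mumford criterion to identify the (semi)stable locus for it, never verify that semistability coincides with stability (which the paper relies on immediately afterwards), never prove surjectivity (that every degree-$3$ stable map to $\PP^r$ admits a semistable lift, including the configurations where new rational components must be inserted), and never carry out the boundary case analysis showing the semistable lifts of a fixed $f$ form a single $SL(2)$-orbit. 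You candidly label this ``the delicate part,'' but it is precisely where the theorem lives; nothing in your sketch is wrong in direction, yet nothing in it discharges these steps.

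Two more specific cautions. Your closing appeal to Zariski's Main Theorem / Luna's slice theorem needs $\bar{\Psi}$ to be a bijective proper morphism onto the normal variety $\bM(\PP^r)$, and bijectivity of the map from the GIT quotient presupposes both the one-orbit-per-fiber claim and the equality of the semistable and stable loci (otherwise the quotient identifies S-equivalent orbits and the fiber analysis must be redone for closed orbits); neither is established without the linearization being fixed. Relatedly, you cannot simply ``choose the linearization so that the semistable locus coincides with the preimage of the domain of definition of the stabilization'' --- whether any linearization on $\cM_0(\PP^1\times\PP^r,(1,3))$ achieves this is a nontrivial fact that must be verified by an explicit Hilbert--Mumford computation, which is the substance of \cite{Parker}.
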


By \cite{Parker}, there are no strictly semistable points on $\cM_0(\PP^1\times\PP^r,(1,3))$. Let $Q_0(\PP^r)$ be the stable part of $\cM_0(\PP^1\times\PP^r,(1,3))$ so that
\[ Q_0(\PP^r)/SL(2) \cong \bM(\PP^r).\]
Moreover, by \cite[Lemma 5.2]{KM}, the stable part $Q_0(\PP^r)$ is contained in the open subvariety of $\cM_0(\PP^1\times\PP^r,(1,3))$ of stable maps whose automorphism groups are trivial. Hence $Q_0(\PP^r)$ is smooth by the convexity of $\PP^1\times \PP^r$ (\cite[Theorem 2]{FP}).
In fact, $Q_0(\PP^r)$ is isomorphic to
the smooth quasi-projective variety $\mathbf{P}_5$ in Proposition 5.6 of \cite{KM}
by its construction.
By composing the universal family
\[\xymatrix{
\widetilde{\cC}\ar[r]\ar[d] & \PP^1\times\PP^r\\
Q_0(\PP^r)
}\]
with the projection $\PP^1\times\PP^r\to \PP^r$ and stabilizing the domain curves, we
obtain a family of stable maps to $\PP^r$
\[\xymatrix{
\cC\ar[r]\ar[d] & \PP^r\\
Q_0(\PP^r)
}\]
which induces the quotient morphism $Q_0(\PP^r)\to \bM(\PP^r)$.

Let $Q_0(X)$ be the fiber product
$$
Q_0(X):=Q_0(\PP^r)\times_{\bM(\PP^r)}\bM(X)\subset Q_0(\PP^r)
$$
so that $Q_0(X)/SL(2)\cong \bM(X)$. Then via the inclusion
\[ \cM_0(\PP^1\times X,(1,3))\hookrightarrow \cM_0(\PP^1\times\PP^r,(1,3)),\]
we find that $Q_0(X)$ is the stable part of the moduli space $\cM_0(\PP^1\times X,(1,3))$ of stable maps to $\PP^1\times X$ which is \emph{smooth}
by the convexity of $\PP^1\times X$ as before.

%By \cite{Parker}, there are no strictly semistable points on $\cM_0(\PP^1\times\PP^r,(1,3))$. Let $Q_0(\PP^r)$ be the %stable part of $\cM_0(\PP^1\times\PP^r,(1,3))$ so that
%\[ Q_0(\PP^r)/SL(2) \cong \bM(\PP^r).\]
%By composing the universal family
%\[\xymatrix{
%\widetilde{\cC}\ar[r]\ar[d] & \PP^1\times\PP^r\\
%Q_0(\PP^r)
%}\]
%with the projection $\PP^1\times\PP^r\to \PP^r$ and stabilizing the domain curves, we
%obtain a family of stable maps to $\PP^r$
%\[\xymatrix{
%\cC\ar[r]\ar[d] & \PP^r\\
%Q_0(\PP^r)
%}\]
%which induces the quotient morphism $Q_0(\PP^r)\to \bM(\PP^r)$. Since $\PP^1\times \PP^r$ is convex and the automorphisms groups are all trivial, $\cM_0(\PP^1\times\PP^r,(1,3))$ is smooth and so is $Q_0(\PP^r)$.

%Let $Q_0(X)$ be the fiber product
%$$
%Q_0(X):=Q_0(\PP^r)\times_{\bM(\PP^r)}\bM(X)\subset Q_0(\PP^r)
%$$
%so that $Q_0(X)/SL(2)\cong \bM(X)$. Then via the inclusion
%\[ \cM_0(\PP^1\times X,(1,3))\hookrightarrow \cM_0(\PP^1\times\PP^r,(1,3)),\]
%we find that $Q_0(X)$ is the stable part of the moduli space $\cM_0(\PP^1\times X,(1,3))$ of stable maps to %$\PP^1\times X$ which is \emph{smooth} since $\PP^1\times X$ is convex and the automorphism groups are all trivial.

Let $\cC_X=\cC\times_{Q_0(\PP^r)}Q_0(X)$ so that we have an induced family of stable maps
\[
 \xymatrix{\cC_X \ar[d]_{\pi}\ar[r]^{\mathrm{ev}}& X\\
 Q_0(X)& \\}
\]
which gives us a rational map
\[
\phi: Q_0(X) \dashrightarrow \bS(X)
\]
defined by the family of coherent sheaves
\[
\cE_0(X):= (\mathrm{ev},\pi)_* \cO_{\cC_X}
\]
on $X$ parameterized by $Q_0(X)$.

By Lemma \ref{dirstab},
the locus of unstable sheaves in the family $\cE_0(X)$ consists of two
subvarieties of $\bM(X)$;
\begin{enumerate}
\item the locus $\Gamma_0^1(X)$ of stable maps
whose images are lines,
\item the locus $\Gamma_0^2(X)$ of stable maps
whose images are unions of two lines.\end{enumerate}
These loci can be also described as GIT quotients
by using the descriptions for $\PP^r$
in \cite[\S4.2]{CK}. It was proved that $\Gamma_0^1(\PP^r)$
is isomorphic to $\Theta_0^1(\PP^r)/SL(2)$ where $\Theta_0^1(\PP^r)$ is a
$\PP(\Sym^3\CC^2\otimes\CC^2)^s$-bundle over $Gr(2,r+1)$
where $\PP(\Sym^3\CC^2\otimes\CC^2)^s$ denotes the stable part of $\PP(\Sym^3\CC^2\otimes\CC^2)$
with respect to the $SL(2)$ action which is standard on $\Sym^3\CC^2$ and trivial on $\CC^2$.
For general $X\subset\PP^r$, using the natural injection
$F_1(X) \hookrightarrow F_1(\PP^r)= Gr(2,r+1)$ of the varieties of lines,
we let $\Theta_0^1(X)$ be the fiber product
$$
\Theta_0^1(X):=F_1(X) \times_{Gr(2,r+1)}\Theta_0^1(\PP^r).
$$
Then we obviously have
$$\Gamma_0^1(X)=\Theta_0^1(X)/SL(2).$$
Note that $\Gamma^1_0(X)$ is an $\bM(\PP^1)$-bundle over $F_1(X)$
where $$\bM(\PP^1)=\cM_0(\PP^1,3)=
\PP(\Sym^3\CC^2\otimes\CC^2)^s/SL(2)=
\PP(\Sym^3\CC^2\otimes\CC^2)\git SL(2)$$ is the moduli space of stable maps to $\PP^1$ of genus $0$ and degree $3$.
Also, $\Gamma_0^1(X)$ is a disjoint union of $\Gamma_0^1(X, \beta)$ by Remark \ref{rem1.1} where
$\Gamma_0^1(X, \beta)$ is a $\bM(\PP^1)$-bundle over the moduli space $F_1(X, \frac{\beta}{3})$ of lines in $X$
with $i_*\frac{\beta}{3}=[\PP^1] \in H_2(\PP^r,\ZZ)$. We let $F_1(X,\frac{\beta}{3})=\emptyset$ if $\frac{\beta}{3}\notin H_2(X,\ZZ)$.

For $X=\PP^r$, it was proved in \cite{CK} that
\[\Gamma_0^2(\PP^r)=\Theta_0^2(\PP^r)/SL(2),\]
where $\Theta_0^2(\PP^r)$ is a $\PP^{r-1}$-bundle over a smooth variety $\bB(\PP^r)$.
Here $\bB(\PP^r)$ is a $[ \PP^1 \times \PP(\Sym^2 \CC^2 \otimes \CC^2)]^s$-bundle over $Gr(2,r+1)$
where $[ \PP^1 \times \PP(\Sym^2 \CC^2 \otimes \CC^2)]^s$ is the stable part of $\PP^1 \times \PP(\Sym^2 \CC^2 \otimes \CC^2)$
with respect to an $SL(2)$ action.
Moreover $\bB(\PP^r)/SL(2)$  is the moduli space $\cM_{0,1}(\PP \cU, 2 )$ of relative stable maps of degree $2$ with one marked point
where $\cU$ is the universal rank $2$ bundle over $Gr(2,r+1)$. See \cite[\S4.2]{CK} for more details.

For the projective homogeneous variety $X \subset \PP^r$, let
\[
\bB(X):= F_1(X) \times_{Gr(2,r+1)} \bB(\PP^r)
\]
be the fiber product which is given by the embedding $F_1(X) \hookrightarrow Gr(2,r+1)$.
Then the quotient $\bB(X)/SL(2)$ is isomorphic to $\cM_{0,1}(\PP \cW ,2 )$
where $\cW$ is the universal rank $2$ bundle over $F_1(X)$.
Let
\[
\Theta_0^2(X):=\bB(X) \times_{X} \cM_{0,1}(X ,1 )
\]
be the fiber product which is given by the evaluation maps at the marked points.
Then we have
$$\Gamma_0^2(X)=\Theta_0^2(X)/SL(2).$$
The smoothness of the evaluation map in item (2) of Lemma \ref{assu5-1} implies that
$\Theta_0^2(X)$ is a locally trivial fiber bundle with fiber
$F_1(X,x)$ over a $[ \PP^1 \times \PP(\Sym^2 \CC^2 \otimes \CC^2)]^s$-bundle over $F_1(X)$.
Recall that $F_1(X,x)$ is the moduli space of lines in $X$
which pass through a given point $x$ in $X$.
%By Lemma \ref{assu5-1} (1) and (2), $\Theta^j_0(X)$ are smooth subvarieties of $Q_0(X)$ for $j=1,2$.
We remark here that $\Gamma_0^2(X)$ is in fact a disjoint union of
\[\Gamma_0^2(X, \beta)= \coprod_{2\gamma+\delta=\beta}\cM_{0,1}(\PP\cW|_{F_1(X,\gamma)},2)\times_X \cM_{0,1}(X ,\delta)\]
where the fiber product is given by the evaluation maps and $2\gamma+\delta =\beta \in  H_2(X,\ZZ)$ such that $i_*(\gamma)=i_*(\delta)=[\PP^1]\in H_2(\PP^r,\ZZ)$.

Let $q_1: Q_1(X)\lr Q_0(X)$ be the blow-up along $\Theta_0^1(X)$.
Let $\Theta_1^1(X)$ be the exceptional divisor of $q_1$ and let $\Theta_1^2(X)$
be the proper transform of $\Theta_0^2(X)$.
Then we apply elementary modification along the divisor $\Theta_1^1(X)$ to define
$$\cE_{1}(X)=
elm_{\Theta_{1}^{1}(X)}(( 1_X \times q_{1} )^*\cE_{0}(X) , \mathcal{A}_{1} )$$
over $X\times Q_1(X)$.
The destabilizing quotient sheaf $ \mathcal{A}_{1}$ can be described
as follows. Let $y\in \Theta_1^1(X)$.
At $q_1(y)\in Q_0(X)$ which is represented by a stable map $f:C\lr L\subset X$ for some line $L$ in $X$ ,
$\cE_0(X)$ is
\begin{equation}\label{eq-35}
f_* \cO_C = \cO_L \oplus \cO_L (-1)^{\oplus 2}
\end{equation}
and the quotient sheaves
$$\mathcal{A}_{1}|_{X\times \{y\}} = \cO_L (-1)^{\oplus 2}$$
form the flat family $\cA_1$ of destabilizing quotients as in Example \ref{25}.
%The method of the proof of the next proposition is the same as that of Proposition \ref{pro1}.
\begin{prop}\label{pro31}
$\cE_1 (X)|_{X\times\{y\}}$ is a stable sheaf if and only if
$y \in Q_1(X)-[\Theta_1^2 (X) \cup  \Theta_1^3(X)]$,
where $\Theta_1^3(X)$ is a smooth subvariety of $\Theta_1^1(X)$,
which is a $\PP^1 \times \PP^{t-1}$-bundle over $\Theta_0^1(X)$.
\end{prop}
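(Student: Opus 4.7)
The proof splits naturally according to whether $y$ lies on the exceptional divisor $\Theta_1^1(X)$. Off $\Theta_1^1(X)$ the map $q_1$ is an isomorphism and the elementary modification reduces to the pull-back, so $\cE_1(X)|_{X\times\{y\}}=\cE_0(X)|_{X\times\{q_1(y)\}}$. By Lemma~\ref{dirstab} this is stable iff $q_1(y)$ is not a multiple cover, i.e., iff $q_1(y)\notin \Theta_0^1(X)\cup \Theta_0^2(X)$; since $q_1(y)\notin \Theta_0^1(X)$ is automatic here, the unstable locus off the exceptional divisor is precisely $\Theta_1^2(X)\setminus \Theta_1^1(X)$.

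The substance of the proof is the analysis on $\Theta_1^1(X)$, which I would carry out by the Kodaira--Spencer strategy used in the proof of Proposition~\ref{pro1}. Fix $y\in \Theta_1^1(X)$ above $f\mh C\to L\subset X$, so that $f_*\cO_C\cong \cO_L\oplus \cO_L(-1)^{\oplus 2}$ and $\cA_1|_{X\times\{y\}}=\cO_L(-1)^{\oplus 2}$. For a tangent vector $v\in T_{q_1(y)}Q_0(X)$ lifted through the blow-up, a pull-back/push-out computation parallel to \eqref{eq-100} shows that the associated first-order family over $\Spec\CC[\epsilon]/(\epsilon^2)$ produces a central fiber fitting in
\begin{equation*}
0\lra \cO_L(-1)^{\oplus 2}\lra \cE_1(X)|_{X\times\{y\}}\lra \cO_L\lra 0,
\end{equation*}
whose extension class is the image of $v$ under the composite
\begin{equation*}
T_{q_1(y)}Q_0(X)\lra \Ext^1_X(f_*\cO_C,f_*\cO_C)\lra \Ext^1_X(\cO_L,\cO_L(-1)^{\oplus 2}).
\end{equation*}
Using that $Q_0(X)$ is open in the smooth variety $\cM_0(\PP^1\times X,(1,3))$, Proposition~\ref{litian}, and items (1) and (2) of Lemma~\ref{assu5-1} (the latter implying $\dim \Ext^1_X(\cO_L,\cO_L(-1))=\dim F_1(X,x)=t$), I would verify that this composite descends to an isomorphism
\begin{equation*}
\overline{KS}\mh N_{\Theta_0^1(X)/Q_0(X),f}\xrightarrow{\,\cong\,} \Hom(\CC^2,\Ext^1_X(\cO_L,\cO_L(-1))),
\end{equation*}
so that points of $\Theta_1^1(X)$ above $f$ are parametrized by nonzero classes $[\phi]\in \PP\Hom(\CC^2,\Ext^1_X(\cO_L,\cO_L(-1)))$ realizing the extension.

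To finish I would enumerate candidate destabilizing subsheaves of the extension, whose reduced Hilbert polynomial is $m+\tfrac{1}{3}$. Subsheaves contained in $\cO_L(-1)^{\oplus 2}$ all have reduced Hilbert polynomial at most $m$ and so do not destabilize; a subsheaf isomorphic to $\cO_L$ would force the extension to split, which is excluded since $\phi\neq 0$. The only remaining candidates are subsheaves $\cF$ fitting in $0\to \cO_L(-1)\to \cF\to \cO_L\to 0$, which have $p(\cF)=m+\tfrac{1}{2}$. Such a $\cF$ embeds in $\cE_1(X)|_{X\times\{y\}}$ iff there is a surjection $\CC^2\twoheadrightarrow \CC$ whose push-forward of $\phi$ vanishes in $\Ext^1_X(\cO_L,\cO_L(-1))$, i.e., iff $\phi$ has nontrivial kernel. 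Combined with $\phi\neq 0$, this is exactly $\rank\phi=1$, and the rank-one locus in $\PP\Hom(\CC^2,\CC^t)$ is the Segre image $\PP(\CC^2)\times \PP(\Ext^1_X(\cO_L,\cO_L(-1)))\cong \PP^1\times \PP^{t-1}$.

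Thus $\Theta_1^3(X)$ is a smooth $\PP^1\times \PP^{t-1}$-bundle over $\Theta_0^1(X)$, as claimed. The main technical obstacle will be the Kodaira--Spencer identification in the second step: the rank-two destabilizing quotient $\cO_L(-1)^{\oplus 2}$ makes the push-out diagram more delicate than in the rank-one case of \S\ref{sec2}, and item (2) of Lemma~\ref{assu5-1} enters essentially in order to realize the normal bundle as $\Hom(\CC^2,\Ext^1_X(\cO_L,\cO_L(-1)))$ of the expected dimension $2t$.
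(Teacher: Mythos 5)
Your proposal is correct and follows essentially the same route as the paper's proof, which (deferring details to \cite[Lemma 4.6]{CK}) reads off the interchanged extension $0\to \cO_L(-1)^{\oplus 2}\to \cE_1(X)|_{X\times\{y\}}\to \cO_L\to 0$, identifies $N_{\Theta_0^1(X)/Q_0(X)}\cong \Ext^1_X(\cO_L,\cO_L(-1))^{\oplus 2}$ by the Kodaira--Spencer argument, and locates instability exactly on the rank-one (Segre) locus $\PP^1\times\PP^{t-1}$, with your explicit enumeration of destabilizing subsheaves simply filling in what the paper dismisses as ``linear algebra.'' One small correction: the normal-space identification uses only item (1) of Lemma \ref{assu5-1} (convexity), exactly as in the degree-2 case, while item (2) is needed elsewhere (for the fiber-bundle structure of $\Theta_0^2(X)$), and $t$ is by definition $\dim\Ext^1_X(\cO_L,\cO_L(-1))$, so the identification with $\dim F_1(X,x)$ is not required here.
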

\begin{proof}
This proof is the same as that of Lemma 4.6 in \cite{CK}.
So we only sketch the key ideas.
As mentioned in Proposition \ref{pro1}, elementary modification interchanges the destabilizing subsheaf
and the destabilizing quotient sheaf \cite[Lemma 4.6]{CK}.
In this case, for $y\in\Theta^1_1(X)$, the sheaf $\cE_1 (X)|_{X\times\{y\}}$ fits into a short exact
sequence
$$
	\ses{\cO_L (-1)^{\oplus 2}}{\cE_1 (X)|_{X\times \{y\}}}{\cO_L}.
$$
Moreover by studying deformation theory we obtain isomorphisms
\begin{equation}\label{eq31}
N_{\Theta_0^1(X)/Q_0(X),q_1(z)}\cong \Hom_X (I_L,\cO_L (-1))^{\oplus 2} \cong \Ext_X^1(\cO_L,\cO_L (-1))^{\oplus 2}.
\end{equation}
Furthermore, the extension class $(v,w) \in \Ext_X^1(\cO_L,\cO_L (-1))^{\oplus 2}-\{0\}$ for $\cE_1 (X)|_{X\times \{y\}}$ above determines the line of $y$ in $N_{\Theta_0^1(X)/Q_0(X),q_1(y)}$.
In particular, if $v,w$ are linearly independent, then $\cE_1 (X)|_{X\times \{y\}}$ is stable.

If $v$ and $w$ are linearly dependent, by linear algebra,
$$
\cE_1 (X)|_{X\times\{y\}} \cong F \oplus \cO_L(-1),
$$
where $F$ is a non-split extension of $\cO_L$ by $\cO_L (-1)$.
In particular, $\cE_1 (X)|_{X\times\{y\}} $ is not a stable sheaf. We define such locus as $\Theta_1^3(X)$ in $\Theta_1^1(X)$.
It is obvious that $\Theta_1^3(X)$
is a $\PP^1 \times \PP^{t-1}$-bundle over $\Theta_0^1(X)$.
\end{proof}
\begin{rema}\label{rem4.3}
The dimension $t=\dim\Ext_X^1(\cO_L,\cO_L (-1))= \int_{\frac{\beta}{3}}c_1(T_X) -2$ is constant in each irreducible component $\bM(X,\beta)$
such that $i_*\frac{\beta}{3}=[\PP^1] \in H_2(\PP^r,\ZZ)$ (cf. Remark \ref{rem3.5}).
\end{rema}
The isomorphism in \eqref{eq31} gives us the following.
\begin{coro}\label{trans2}
$Q_0(X)$ intersects with $\Theta_0^1(\PP^r)$ cleanly along $\Theta_0^1(X)$ in $Q_0(\PP^r)$. Hence, $Q_1(X)$ is the proper transform of $Q_0(X)$ via the blow-up $q_1:Q_1(\PP^r)\to Q_0(\PP^r)$.
\end{coro}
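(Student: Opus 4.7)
The plan is to mimic the argument for Lemma \ref{trans1} in the degree-$2$ setting, applying Definition-Proposition \ref{def-pro}. Concretely, we must verify three ingredients: that the set-theoretic intersection of $Q_0(X)$ and $\Theta_0^1(\PP^r)$ inside $Q_0(\PP^r)$ equals $\Theta_0^1(X)$, that $\Theta_0^1(X)$ is smooth, and that the tangent spaces satisfy
\[
T_z\Theta_0^1(X)=T_zQ_0(X)\cap T_z\Theta_0^1(\PP^r)
\]
for every $z\in\Theta_0^1(X)$.

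First I would observe that the set-theoretic equality is immediate from the universal property of fiber products, since both $Q_0(X)$ and $\Theta_0^1(X)$ were \emph{defined} as fiber products over $Q_0(\PP^r)$, $\bM(\PP^r)$ (resp.\ $Gr(2,r+1)$). Next, smoothness of $\Theta_0^1(X)$ follows from the description of $\Theta_0^1(X)$ as a $\PP(\Sym^3\CC^2\otimes\CC^2)^s$-bundle over $F_1(X)$, which is smooth by item (1) of Lemma \ref{assu5-1}.

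For the tangent-space condition, the natural idea is to compare normal bundles. The same deformation-theoretic calculation that produced \eqref{eq31} applies verbatim to $\PP^r$ in place of $X$, giving
\[
N_{\Theta_0^1(\PP^r)/Q_0(\PP^r),z}\cong \Hom_{\PP^r}(I_{L/\PP^r},\cO_L(-1))^{\oplus 2}\cong \Ext^1_{\PP^r}(\cO_L,\cO_L(-1))^{\oplus 2},
\]
so that \eqref{eq31} produces a commutative diagram
\[
\xymatrix{
0\ar[r]&T_z\Theta_0^1(X)\ar[r]\ar@{^(->}[d]& T_zQ_0(X)\ar[r]\ar@{^(->}[d]& \Hom_X(I_{L/X},\cO_L(-1))^{\oplus 2}\ar[r]\ar@{^(->}[d]&0\\
0\ar[r]&T_z\Theta_0^1(\PP^r)\ar[r]& T_zQ_0(\PP^r)\ar[r]& \Hom_{\PP^r}(I_{L/\PP^r},\cO_L(-1))^{\oplus 2}\ar[r]&0,
}
\]
where the injectivity of the rightmost vertical arrow comes from the inclusion $N_{L/X}\hookrightarrow N_{L/\PP^r}$ of normal bundles (induced by $X\subset \PP^r$) after applying $H^0(\,\cdot\,(-1))^{\oplus 2}$ via the projection formula. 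A direct diagram chase then yields $T_z\Theta_0^1(X)=T_zQ_0(X)\cap T_z\Theta_0^1(\PP^r)$.

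With clean intersection established, Definition-Proposition \ref{def-pro}(2) immediately gives that the blow-up of $Q_0(X)$ along $\Theta_0^1(X)$ is the proper transform of $Q_0(X)$ in $Q_1(\PP^r)$, which is exactly $Q_1(X)$. The only place that could require care is checking that the map of normal bundles is genuinely injective on fibers, but this reduces to the well-known fact that $N_{L/X}\hookrightarrow N_{L/\PP^r}$ is a subbundle (since $X$ is smooth in $\PP^r$), tensored with $\cO_L(-1)$ and passed through $H^0$; since $H^1(N_{L/X}(-1))=0$ by Remark \ref{rem3.5}, the long exact sequence argument poses no obstruction.
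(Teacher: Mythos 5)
Your proposal is correct and follows essentially the same route as the paper: the paper simply says the proof is identical to that of Lemma \ref{trans1}, using the normal-space identification \eqref{eq31}, the fiber-product definitions for the set-theoretic intersection, the commutative diagram of tangent/normal sequences with injectivity coming from $N_{L/X}\subset N_{L/\PP^r}$, and then Definition-Proposition \ref{def-pro}. Nothing essential is missing from your argument.
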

The proof is identical to that of Lemma \ref{trans1}.

\begin{rema}
Recall that $\Theta^1_0(X)$ is a $\PP(\Sym^3\CC^2\otimes\CC^2)^s$-bundle over $F_1(X)$.
From the proof of Proposition \ref{pro31}, we find that the isomorphism type of $\cE_1 (X)|_{X\times\{y\}}$ is constant on the fibers $\PP(\Sym^3\CC^2\otimes\CC^2)^s$.
\end{rema}

Let $q_2:Q_2(X)\to Q_1(X)$ be the blow-up along the proper transform $\Theta_1^2(X)$ of $\Theta^2_0(X)$. Recall that $\Theta^2_0(X)$ is a $F_1(X,x)$-bundle over the smooth variety $\bB(X)$ which is a $[\PP^1\times\PP(\Sym^2\CC^2\otimes\CC^2)]^s$-bundle over $F_1(X)$.
There is a tautological section of $\Theta^2_0(X)\to \bB(X)$ which chooses the same line chosen in the base $F_1(X)$.
The intersection of $\Theta_0^1(X)$ and $\Theta_0^2(X)$ is precisely this section which is smooth.
The normal direction of $\Theta^1_0(X)\cap\Theta^2_0(X)$ in $\Theta^1_0(X)$ is the smoothing of a node while the normal direction in $\Theta^2_0(X)$ keeps the node.
In particular, $\Theta^2_0(X)$ and $\Theta^1_0(X)$ intersect cleanly along $\Theta^1_0(X)\cap\Theta^2_0(X)$.
Hence $\Theta_1^2(X)$ is smooth and thus $q_2$ is a smooth blow-up.
Furthermore, notice that $\Theta^1_1(X)\cap \Theta_1^2(X)$ is contained in $\Theta^3_1(X)$.
Therefore $\Theta^3_1(X)\cap \Theta_1^2(X)=\Theta^1_1(X)\cap \Theta_1^2(X)$ and it is smooth.
Let $\Theta_2^2(X)$ denote the exceptional divisor of $q_2$ and let $\Theta_2^j(X)$ for $j=1,3$ be the proper transforms of $\Theta_1^j(X)$.
Then we find that $\Theta^j_2(X)$ are smooth subvarieties of $Q_2(X)$.

We next apply elementary modification to $(1_X\times q_2)^*\cE_1(X)$ along $\Theta_2^2(X)$. For each point $y \in \Theta_2^2(X)$. Let $q_2(y)=y_1$.
If $y_1 \in \Theta_1^2(X)-\Theta_1^1(X)$, then it corresponds to a stable map $f:C\to X$
whose image is the union $C'$ of two distinct lines $L_1$ and
$L_2$. Let $L_2$ be the degree 2 component without loss of generality.
By adjunction, we have a subsheaf $\cO_{C'}$
of $\cE_1(X)|_{X\times\{y\}}=f_*\cO_C$ and a short exact sequence
\begin{equation}\label{eq32}
0\to \cO_{C'}\to f_*\cO_C\to
\cO_{L_2}(-1)\to 0.
\end{equation}
Since $\cO_{C'}$ and $\cO_{L_2}(-1)$ are stable by Lemma \ref{dirstab},
we see that $\cO_{L_2}(-1)$ is the destabilizing quotient.
If $y_1 \in \Theta_1^2(X)\cap \Theta_1^1(X)=\Theta^2_1(X)\cap \Theta^3_1(X)$,
we showed in the proof of Proposition \ref{pro31} that
$\cE_1(X)|_{X\times \{y_1\}} \cong F\oplus \cO_L(-1)$ where
$F$ is a non split extension class in $\Ext_X^1(\cO_L ,\cO_{L}(-1))$
and hence the destabilizing quotient at $y_1$
\begin{equation}\label{3000}
\cA_{2}|_{X\times\{y\}}\cong \cO_{L}(-1).
\end{equation}
Therefore, the destabilizing quotients form a flat family $\cA_2$ over the divisor $\Theta^2_2(X)$. Let
$$\cE_{2}(X)=
elm_{\Theta_{2}^{2}(X)}(( 1_X \times q_{2} )^*\cE_{1}(X) , \mathcal{A}_{2} )$$
over $X\times Q_2(X)$.

\begin{prop}\label{pro32}
$\cE_2(X)|_{X\times\{y\}}$ is a stable sheaf for $y\in Q_2(X)- \Theta_2^3 (X)$.
\end{prop}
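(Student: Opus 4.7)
The plan is to parallel the strategy used in Proposition~\ref{pro1}: first split $Q_2(X)$ into the exceptional divisor $\Theta_2^2(X)$ and its complement; then, on the divisor, realize the modified sheaf as an extension whose class is encoded by a Kodaira-Spencer map, and finally analyze stability of that extension in each of the two subcases identified in the paragraphs preceding the statement. On $Q_2(X)-\Theta_2^2(X)$ the morphism $q_2$ is an isomorphism and no modification is performed, so $\cE_2(X)|_{X\times\{y\}}\cong\cE_1(X)|_{X\times\{q_2(y)\}}$. For such $y$ with $y\notin\Theta_2^3(X)$ we have $q_2(y)\notin\Theta_1^3(X)$ (the proper transform agrees with the preimage away from $\Theta_1^2(X)$) and automatically $q_2(y)\notin\Theta_1^2(X)$, so Proposition~\ref{pro31} delivers stability. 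Hence we reduce to $y\in\Theta_2^2(X)-\Theta_2^3(X)$, and we write $y_1:=q_2(y)\in\Theta_1^2(X)$ with $y$ represented by a direction $v\in N_{\Theta_1^2(X)/Q_1(X),\,y_1}$.

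By the standard Langton calculation, elementary modification along the divisor $\Theta_2^2(X)$ interchanges sub and quotient so that the central fibre fits in
\begin{equation*}
0\lra \cA_2|_{X\times\{y_1\}}\lra\cE_2(X)|_{X\times\{y\}}\lra\cE_1(X)|_{X\times\{y_1\}}/\cA_2|_{X\times\{y_1\}}\lra 0,
\end{equation*}
where $\cA_2|_{X\times\{y_1\}}$ is $\cO_{L_2}(-1)$ via \eqref{eq32} when $y_1\in\Theta_1^2(X)-\Theta_1^1(X)$, and $\cO_L(-1)$ via \eqref{3000} when $y_1\in\Theta_1^2(X)\cap\Theta_1^3(X)$. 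Mimicking the Kodaira-Spencer diagram chase in the proof of Proposition~\ref{pro1}, the extension class of this sequence is the image of $v$ under a descended map
\begin{equation*}
\overline{KS}\colon N_{\Theta_1^2(X)/Q_1(X),\,y_1}\lra \Ext^1_X\bl\cE_1(X)|_{X\times\{y_1\}}/\cA_2|_{X\times\{y_1\}},\,\cA_2|_{X\times\{y_1\}}\br.
\end{equation*}
The identification of $\overline{KS}$ with the coboundary of the defining short exact sequence uses only smoothness of $Q_0(X)$ and the vanishings supplied by items~(1) and~(2) of Lemma~\ref{assu5-1}, and so transfers verbatim from the analogous analysis in \cite[Lemma 4.6]{CK}.

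Stability is then checked in the two subcases. In the generic subcase $y_1\in\Theta_1^2(X)-\Theta_1^1(X)$, the sub $\cO_{L_2}(-1)$ and the quotient $\cO_{C'}$ are stable by Lemma~\ref{dirstab} with reduced Hilbert polynomials $m$ and $m+\tfrac12$ respectively, so any destabilizer of $\cE_2(X)|_{X\times\{y\}}$ must arise from a splitting of the extension; the non-vanishing of $\overline{KS}(v)$ precludes this. In the subcase $y_1\in\Theta_1^2(X)\cap\Theta_1^3(X)$, unwinding the identification \eqref{eq31} shows that $y\notin\Theta_2^3(X)$ translates to the statement that the direction $v$ is not tangent to $\Theta_1^3(X)$; consequently the pair $(v_1,v_2)\in\Ext^1_X(\cO_L,\cO_L(-1))^{\oplus 2}$ controlling the modification is linearly independent, which prevents $\cE_2(X)|_{X\times\{y\}}$ from decomposing as $F\oplus\cO_L(-1)$, and stability then follows exactly as in Proposition~\ref{pro31}.

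The main obstacle is the second subcase: one must identify the proper transform $\Theta_2^3(X)$ inside $\Theta_2^2(X)$ in terms of the Kodaira-Spencer image and verify that its complement corresponds precisely to the stable locus. This requires the local product structure of the blow-up $q_2$ near the clean intersection $\Theta_1^2(X)\cap\Theta_1^3(X)$, together with the description of $\Theta_1^3(X)$ as a $\PP^1\times\PP^{t-1}$-bundle established in Proposition~\ref{pro31}. With these local models in place, the stability verification is a purely linear-algebraic check on $\Ext^1_X(\cO_L,\cO_L(-1))^{\oplus 2}$ already carried out in \cite[Lemma 4.6]{CK}.
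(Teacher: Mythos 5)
Your overall plan is the same one the paper relies on: the paper's proof is a one-line deferral to \cite[Lemma 4.10]{CK}, and your reduction (away from $\Theta_2^2(X)$ use Proposition \ref{pro31}; on $\Theta_2^2(X)$ realize the modified fibre as an extension whose class is the image of the normal direction under a Kodaira--Spencer map, as in Proposition \ref{pro1}) is exactly the CK-style argument. Two small points in the generic subcase: the third term of your displayed sequence should be $\ker\bl\cE_1(X)|_{X\times\{y_1\}}\to\cA_2|_{X\times\{y_1\}}\br$ (i.e.\ $\cO_{C'}$), not a quotient by $\cA_2$; and you need injectivity of $\overline{KS}$ on the whole normal space, not merely non-vanishing at one chosen $v$. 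Your assertion that any destabilizer of $0\to\cO_{L_2}(-1)\to\cE_2(X)|_{X\times\{y\}}\to\cO_{C'}\to 0$ must come from a splitting copy of $\cO_{C'}$ is true (every multiplicity-one subsheaf of $\cO_{C'}$ has $\chi(m)\le m$, which forces any destabilizing subsheaf to map isomorphically onto $\cO_{C'}$), but you state it without proof; this classification is part of the content of the cited lemma.

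The genuine gap is in the second subcase, over $y_1\in\Theta_1^2(X)\cap\Theta_1^3(X)$. There the modification along $\Theta_2^2(X)$ produces an extension $0\to\cO_L(-1)\to\cE_2(X)|_{X\times\{y\}}\to F\to 0$ whose class lies in $\Ext^1_X(F,\cO_L(-1))$, where $F$ is the non-split extension of $\cO_L$ by $\cO_L(-1)$. The identification \eqref{eq31} and the rank condition on a pair $(v_1,v_2)\in\Ext^1_X(\cO_L,\cO_L(-1))^{\oplus 2}$ describe the normal bundle of $\Theta_0^1(X)$ and govern the \emph{first} modification (Proposition \ref{pro31}, extensions of $\cO_L$ by $\cO_L(-1)^{\oplus 2}$); they do not control the modification along $\Theta_2^2(X)$, so the chain ``$v$ not tangent to $\Theta_1^3(X)$ $\Rightarrow$ $(v_1,v_2)$ linearly independent $\Rightarrow$ stability as in Proposition \ref{pro31}'' does not apply as written. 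What must actually be proved is: (i) an extension of $F$ by $\cO_L(-1)$ is stable if and only if it is non-split (a destabilizer analysis using, e.g., $\Hom_X(\cO_L,F)=0$ and the bound $\chi\le m$ for multiplicity-one subsheaves of $F$), and (ii) the Kodaira--Spencer map $N_{\Theta_1^2(X)/Q_1(X),\,y_1}\to\Ext^1_X(F,\cO_L(-1))$ is nonzero on every normal direction outside the cone cut out by $\Theta_2^3(X)\cap\Theta_2^2(X)$. You flag the need for a local model but supply neither (i) nor (ii); these are precisely the computations carried out in \cite[Lemma 4.10]{CK} that the paper's proof invokes.
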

\begin{proof}
The proof of this proposition is identical to that of Lemma 4.10 of \cite{CK}. So we omit it.
\end{proof}
The same argument in the proof of Lemma \ref{trans1} also proves the following.
\begin{coro}\label{trans3}
$Q_1(X)$ intersects with $\Theta^2_1(\PP^r)$ cleanly along $\Theta^2_1(X)$ in $Q_1(\PP^r)$. Hence $Q_2(X)$ is the proper transform of $Q_1(X)$ via the blow-up $q_2:Q_2(\PP^r)\to Q_1(\PP^r)$.
\end{coro}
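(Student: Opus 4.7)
The plan is to mimic the proof of Lemma \ref{trans1} verbatim, using Definition-Proposition \ref{def-pro} to reduce the statement about the proper transform to a clean-intersection check at the level of tangent spaces. First I would identify the set-theoretic intersection $Q_1(X)\cap\Theta_1^2(\PP^r)$ in $Q_1(\PP^r)$ with $\Theta_1^2(X)$. Away from the exceptional divisor $\Theta_1^1(\PP^r)$, this is just the statement $Q_0(X)\cap\Theta_0^2(\PP^r)=\Theta_0^2(X)$ in $Q_0(\PP^r)$, which is immediate from the universal property of the fiber products defining $\bB(X)$ and $\Theta_0^2(X)$ (together with $F_1(X)=F_1(\PP^r)\times_{\bM(\PP^r)}\bM(X)$). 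The remaining stratum $\Theta_1^2(X)\cap\Theta_1^1(X)$ is handled in the same way after invoking the cleanness already established in Corollary \ref{trans2}, which pulls the computation back to $Q_0$ along $q_1$. Smoothness of $\Theta_1^2(X)$ was already verified in the paragraph preceding the statement.

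Second, for each $y\in\Theta_1^2(X)$ I would produce the commutative diagram of exact sequences
\[
\xymatrix{
0\ar[r]&T_{y}\Theta_1^2(X)\ar[r]\ar[d]& T_{y} Q_1(X)\ar[r]\ar[d]& N_{\Theta_1^2(X)/Q_1(X),y}\ar[r]\ar[d]&0\\
0\ar[r]&T_{y}\Theta_1^2(\PP^r)\ar[r]& T_{y} Q_1(\PP^r)\ar[r]& N_{\Theta_1^2(\PP^r)/Q_1(\PP^r),y}\ar[r]&0
}
\]
and then show that the right-hand vertical arrow is injective. Together with the fact that the middle vertical arrow is injective (since $Q_1(X)\hookrightarrow Q_1(\PP^r)$ is the proper transform of the closed embedding $Q_0(X)\hookrightarrow Q_0(\PP^r)$ by Corollary \ref{trans2}), a diagram chase will give
\[
T_{y}\Theta_1^2(X)=T_{y}Q_1(X)\cap T_{y}\Theta_1^2(\PP^r),
\]
which is exactly the clean-intersection condition of Definition-Proposition \ref{def-pro}, and then part (2) of that Definition-Proposition yields the claimed description of $Q_2(X)$ as the proper transform of $Q_1(X)$.

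The main obstacle is the injectivity of the normal-bundle map at $y$, and this has two cases. For $y_1=q_2(y)\in\Theta_0^2(X)\setminus\Theta_0^1(X)$, corresponding to a stable map $f:C\to X$ whose image is a union of two lines $L_1\cup L_2$, the normal direction to $\Theta_0^2$ is the smoothing of the node, governed by $\Ext^1$-data on the nodal curve $L_1\cup L_2$; the inclusion $T_X\subset T_{\PP^r}|_X$ (i.e.\ $N_{X/\PP^r}$ sitting transversally) together with $H^1(f^*T_X)=0$ from item (1) of Lemma \ref{assu5-1} and the smoothness of the evaluation map from item (2) force the corresponding normal space of $\Theta_0^2(X)$ at $y_1$ to inject into that of $\Theta_0^2(\PP^r)$, and this injectivity is preserved under $q_1$ since $y_1$ is away from the blow-up center. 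For $y_1\in\Theta_1^2(X)\cap\Theta_1^1(X)$ the same comparison carries over after tensoring with the tautological bundle of the blow-up $q_1$, exactly as in the proof of Lemma \ref{trans1}, so no new ideas are needed beyond a careful bookkeeping of the two strata.
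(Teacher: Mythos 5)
Your plan is exactly what the paper does: the paper proves this corollary by simply asserting that the argument of Lemma \ref{trans1} applies verbatim (set-theoretic intersection from the fiber-product description, smoothness of $\Theta_1^2(X)$, and the tangent-space condition via the injectivity of the induced map on normal spaces, then Definition-Proposition \ref{def-pro}). Your write-up just spells out the two-strata bookkeeping that the paper leaves implicit, so it is the same approach and is fine.
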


Let $q_3:Q_3(X)\to Q_2(X)$ be the blow-up along the smooth subvariety $\Theta_2^3(X)$.
Let $\Theta^3_3(X)$ denote the exceptional divisor and $\Theta^j_3(X)$ be the proper transforms of $\Theta^j_2(X)$ for $j=1,2$.
From our analysis of $\cE_2(X)$ above, we find that for $y\in \Theta^3_3(X)$, $\cO_L(-1)$ is the destabilizing quotient for some line $L$ in $X$.
Hence these form a flat family $\cA_3$ of quotients. We let
$$\cE_{3}(X)=
elm_{\Theta_{3}^{3}(X)}(( 1_X \times q_{3} )^*\cE_{2}(X) , \mathcal{A}_{3} )$$
over $X\times Q_3(X)$.

For $i,j=1,2,3$, we define $$\bM_i(X)=Q_i(X)/SL(2),\qquad \Gamma_i^j(X)=\Theta_i^j(X)/SL(2).$$
Since $Q_0(X)$ is the stable part of a smooth projective variety which has no strictly semistable points, $\bM_i(X)$ are projective and the induced morphisms
\[ \bM_3(X)\lra \bM_2(X)\lra \bM_1(X)\lra \bM(X)\]
are (weighted) blow-ups.
By the same proof as \cite[Lemma 4.13]{CK}, we obtain the following.
\begin{prop}\label{108211}
$\cE_3(X)$ is a family of stable sheaves on $X$ parameterized by $Q_3(X)$.
Therefore there is an $SL(2)$-invariant morphism $\psi_X:Q_3(X)\to \bS(X)$ which induces a birational morphism $\bar\psi_X:\bM_3(X)\to \bS(X)$.
\end{prop}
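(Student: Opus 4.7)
The plan is to mirror the strategy of Propositions \ref{pro1}, \ref{pro31}, and \ref{pro32}, following the template of \cite[Lemma 4.13]{CK}. By Proposition \ref{pro32}, $(1_X\times q_3)^*\cE_2(X)$ already has stable central fibers outside the exceptional divisor $\Theta^3_3(X)$, and over $\Theta^3_2(X)$ the destabilizing quotients $\cO_L(-1)$ assemble into the flat family $\cA_3$ by the relative Harder-Narasimhan filtration. Since elementary modification leaves the family unchanged away from its center, it suffices to verify stability of $\cE_3(X)|_{X\times\{y\}}$ at each $y\in\Theta^3_3(X)$; the morphism $\psi_X$ and its descent $\bar\psi_X$ then follow formally.

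To describe the central fibers, I would use the fact (proved in Proposition \ref{pro1}) that elementary modification interchanges destabilizing subs and quotients. For $y_1:=q_3(y)\in\Theta^3_2(X)$ lying over a line $L\subset X$, where $\cE_2(X)|_{X\times\{y_1\}}\cong F\oplus\cO_L(-1)$ with $F$ a non-split extension of $\cO_L$ by $\cO_L(-1)$ (as in Proposition \ref{pro31}), the sheaf $\cE_3(X)|_{X\times\{y\}}$ should fit into a short exact sequence
\[ 0\to \cO_L(-1)\to \cE_3(X)|_{X\times\{y\}}\to F\to 0. \]
A direct comparison of reduced Hilbert polynomials shows $F$ itself is stable and that any pure subsheaf destabilizing this extension would force it to split. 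Hence stability reduces to non-splitness.

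For non-splitness I would compute the Kodaira-Spencer map exactly as in \eqref{1103234} and \eqref{eq-7}: a tangent vector $v\in T_{y_1}Q_2(X)$ transverse to $\Theta^3_2(X)$ yields a one-parameter deformation whose elementary modification recovers the class of $\cE_3(X)|_{X\times\{y\}}$ in $\Ext^1_X(F,\cO_L(-1))$. Using the description of $\Theta^3_1(X)$ as a $\PP^1\times\PP^{t-1}$-bundle and the normal bundle identification \eqref{eq31}, the induced map on $N_{\Theta^3_2(X)/Q_2(X),y_1}$ should descend to a nonzero homomorphism into $\Ext^1_X(F,\cO_L(-1))$ for every $y$ lying above $y_1$, producing a non-split extension. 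The clean intersection arguments of Lemma \ref{trans1} and Corollaries \ref{trans2}, \ref{trans3} let us transfer the calculation from the $\PP^r$-case already treated in \cite{CK}.

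Once stability holds on all of $Q_3(X)$, the universal property of $\bS(X)$ produces $\psi_X$. Every center and every destabilizing quotient is defined by intrinsic filtration data, so $\psi_X$ is automatically $SL(2)$-invariant and descends to $\bar\psi_X$. Birationality is immediate because $\cE_0(X)$ is already stable on the open dense locus of smooth rational cubics, where $\bar\phi$ is an isomorphism. The main technical obstacle will be the Kodaira-Spencer analysis: tracking the tangent and normal spaces through three successive blow-ups and elementary modifications while keeping the relative Harder-Narasimhan filtrations compatible. The convexity of $\PP^1\times X$ and item (2) of Lemma \ref{assu5-1} keep all intermediate parameter spaces smooth, which should reduce the verification to the $\PP^r$-case.
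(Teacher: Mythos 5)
Your proposal follows essentially the same route as the paper, which proves this proposition simply by invoking the argument of \cite[Lemma 4.13]{CK}: identify the fibers of $\cE_3(X)$ over $\Theta^3_3(X)$ as extensions of $F$ by $\cO_L(-1)$ obtained by interchanging the destabilizing sub- and quotient sheaves, verify non-splitness (hence stability) by the Kodaira--Spencer/normal-bundle analysis already used in Propositions \ref{pro1} and \ref{pro31}, and then obtain $\psi_X$, its $SL(2)$-invariance, and birationality of $\bar\psi_X$ from the universal property of $\bS(X)$ and the dense open locus where $\cE_0(X)$ is already stable. This matches the paper's (cited) proof in both structure and substance.
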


In the remaining part of this subsection we show that
the morphism $\bar{\psi}_X$
can be factorized into a sequence of weighted blow-ups.
To do this, we analyze analytic neighborhoods of
the exceptional divisors $\Gamma_3^i (X), i=1,2,3$ in $\bM_3(X)$.
First the analytic neighborhood of $\Gamma_3^1 (X)$
is very similar to the analytic neighborhood of $\Gamma_3^1 (\PP^r)$
because of the following two lemmas.
\begin{lemm}\label{norline}
The normal bundle of a line $L\cong \PP^1$ in $X$ is
$$N_{L /X} \cong \cO_{\PP^1}(1)^{\oplus k}\oplus \cO_{\PP^1}^{\oplus l}$$
for some integers $k,l$ with $k+l=\dim X-1$.
\end{lemm}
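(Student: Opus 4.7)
The plan is to combine Grothendieck's splitting theorem with the normal bundle exact sequence (for the upper bound on splitting degrees) and the smoothness of the evaluation map (for the lower bound).

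\medskip

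\noindent\textbf{Step 1: Grothendieck splitting.} Since $L \cong \PP^1$, Grothendieck's theorem gives
\[
N_{L/X} \cong \bigoplus_{i=1}^{\dim X - 1} \cO_{\PP^1}(a_i)
\]
for some integers $a_i$, and so the lemma reduces to showing that every $a_i$ lies in $\{0,1\}$.

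\medskip

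\noindent\textbf{Step 2: Upper bound $a_i \leq 1$.} I would use the inclusions $L \subset X \subset \PP^r$, which produce the short exact sequence of normal bundles
\[
0 \lra N_{L/X} \lra N_{L/\PP^r} \lra N_{X/\PP^r}|_L \lra 0.
\]
Since $N_{L/\PP^r} \cong \cO_{\PP^1}(1)^{\oplus (r-1)}$, each summand $\cO_{\PP^1}(a_i)$ injects into $\cO_{\PP^1}(1)^{\oplus(r-1)}$, which gives a nonzero section of $\cO_{\PP^1}(1-a_i)^{\oplus(r-1)}$ and hence forces $a_i \leq 1$.

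\medskip

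\noindent\textbf{Step 3: Lower bound $a_i \geq 0$.} Here I invoke item (2) of Lemma \ref{assu5-1}, which says the evaluation map $\mathrm{ev}:\cM_{0,1}(X,1) \to X$ is smooth. At a point $(L,p) \in \cM_{0,1}(X,1)$, the forgetful map to $\cM_0(X,1) = F_1(X)$ has fiber $L$, so one has an exact sequence
\[
0 \lra T_p L \lra T_{(L,p)}\cM_{0,1}(X,1) \lra H^0(L,N_{L/X}) \lra 0,
\]
and the differential of $\mathrm{ev}$ sends $T_p L$ into $T_p L \subset T_p X$ via the inclusion, while it sends $H^0(L,N_{L/X})$ to $T_p X$ by the composition with evaluation at $p$ followed by the projection $T_p X \to N_{L/X}|_p$. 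Surjectivity of $d\,\mathrm{ev}$ on the normal quotient thus amounts to surjectivity of $H^0(L,N_{L/X}) \to N_{L/X}|_p$, i.e.\ global generation of $N_{L/X}$ at $p$. Doing this for every $p \in L$ shows that $N_{L/X}$ is globally generated, which forces $a_i \geq 0$ in the Grothendieck splitting.

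\medskip

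Combining Steps 2 and 3 gives $a_i \in \{0,1\}$, and letting $k$ (resp.\ $l$) be the number of indices with $a_i = 1$ (resp.\ $a_i = 0$) yields the desired decomposition with $k + l = \dim X - 1$. I expect the only subtle point to be the tangent-space identification in Step 3; the rest is formal once the two properties of Lemma \ref{assu5-1} are in hand.
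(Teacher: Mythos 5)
Your proof is correct. It follows the paper's skeleton for the upper bound: the paper likewise observes that $N_{L/X}$ is a subbundle of $N_{L/\PP^r}\cong\cO_{\PP^1}(1)^{\oplus (r-1)}$, so each Grothendieck summand has degree $a_i\le 1$. Where you diverge is the lower bound. The paper disposes of negative summands in one line using only convexity (item (1) of Lemma \ref{assu5-1}); note that, as stated, convexity applied to the embedding $L\subset X$ only gives $H^1(N_{L/X})=0$, i.e.\ $a_i\ge -1$, so the paper's claim implicitly rests on applying convexity to multiple covers of $L$ (exactly the computation in Remark \ref{rem3.5}, which yields $H^1(N_{L/X}(-1))=0$ and hence $a_i\ge 0$), or equivalently on global generation of $T_X$ for homogeneous $X$. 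You instead use item (2), smoothness of $\mathrm{ev}:\cM_{0,1}(X,1)\to X$: identifying the horizontal part of $T_{(L,p)}\cM_{0,1}(X,1)$ with $H^0(L,N_{L/X})$ and noting that surjectivity of $d\,\mathrm{ev}$ onto $T_pX/T_pL\cong (N_{L/X})_p$ is precisely global generation of $N_{L/X}$ at $p$; this is a valid and fully spelled-out argument, and it is in the same spirit as the fibration argument the paper itself uses in Lemma \ref{norbun}. In short, your route costs a slightly longer tangent-space computation but makes the exclusion of negative factors explicit, while the paper's route is shorter but needs the reader to supply the multiple-cover (or global generation) justification; both inputs are available in \S\ref{sec3.1}, which permits items (1) and (2) of Lemma \ref{assu5-1}.
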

\begin{proof}
Since $N_{L/\PP^r}=\cO_{\PP^1}(1)^{\oplus r-1}$, $N_{L/X}$ is a subbundle of $\cO(1)^{\oplus r-1}$. Since $X$ is convex, we cannot have negative factors and hence the lemma follows.
\end{proof}

\begin{lemm}\label{norbun}
The normal bundle to $\Theta^1_0(X)$ in $Q_0(X)$ restricted to
a fiber $\PP(\Sym^3(\CC^2)\otimes \CC^2)^s=(\PP^7)^s$ is
$$\cO_{(\PP^7)^s}(-1)^{\oplus 2t}$$ where $t=\dim F_1(X,x)=\dim \Ext^1_X(\cO_L,\cO_L(-1))$.
\end{lemm}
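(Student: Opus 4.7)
The plan is to reduce to the already-established case $X=\PP^r$ from \cite{CK} by exploiting the clean intersection in Corollary \ref{trans2}. For $X=\PP^r$, the analogous statement
\[
N_{\Theta_0^1(\PP^r)/Q_0(\PP^r)}|_{(\PP^7)^s}\cong\cO_{(\PP^7)^s}(-1)^{\oplus 2(r-1)}
\]
is part of the local analysis near $\Gamma_0^1(\PP^r)$ carried out in \cite{CK}; the $\cO(-1)$ twist there arises because the universal degree-$3$ map over $\PP(\Sym^3\CC^2\otimes\CC^2)^s$ is defined only up to a scalar, so the Kodaira-Spencer identification naturally picks up the tautological line bundle.

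Granting this, I would proceed for general $X$ as follows. By Corollary \ref{trans2} together with Definition-Proposition \ref{def-pro}, the clean intersection of $Q_0(X)$ and $\Theta_0^1(\PP^r)$ along $\Theta_0^1(X)$ in $Q_0(\PP^r)$ yields a subbundle inclusion
\[
N_{\Theta_0^1(X)/Q_0(X)}\hookrightarrow N_{\Theta_0^1(\PP^r)/Q_0(\PP^r)}|_{\Theta_0^1(X)},
\]
which has constant rank $2t$ since all four schemes involved are smooth. Restricting to a fiber $(\PP^7)^s$ over $L\in F_1(X)$, this exhibits $N_{\Theta_0^1(X)/Q_0(X)}|_{(\PP^7)^s}$ as a rank-$2t$ subbundle of $\cO_{(\PP^7)^s}(-1)^{\oplus 2(r-1)}$. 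Next, I would identify this inclusion fiberwise: by \eqref{eq31} the embedding at $y\in(\PP^7)^s$ is exactly
\[
H^0(N_{L/X}(-1))^{\oplus 2}\hookrightarrow H^0(N_{L/\PP^r}(-1))^{\oplus 2},
\]
induced by the subsheaf inclusion $N_{L/X}\subset N_{L/\PP^r}$ from Lemma \ref{norline}. Since $L$ is fixed as $y$ varies on the fiber, the sub-vector-space $V:=H^0(N_{L/X}(-1))^{\oplus 2}\cong\CC^{2t}$ is independent of $y$. Consequently the subbundle is of the form $V\otimes\cO_{(\PP^7)^s}(-1)\cong\cO_{(\PP^7)^s}(-1)^{\oplus 2t}$, as required.

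The main obstacle I anticipate is the first ingredient: one must extract from \cite{CK} the precise identification $\cO(-1)^{\oplus 2(r-1)}$ in the $\PP^r$ case (not merely the rank, nor the bundle up to an unspecified twist) and confirm that this identification is natural with respect to embeddings $X\hookrightarrow\PP^r$, so that the clean-intersection subbundle really is fiberwise constant in the sense used above. Once this naturality is in hand, the remaining steps are bookkeeping from the deformation-theoretic identification \eqref{eq31} and the splitting type recorded in Lemma \ref{norline}.
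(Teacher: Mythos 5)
Your argument is sound and reaches the right conclusion, but it is not quite the route the paper takes, and the two differ in what they borrow from \cite{CK}. The paper's own proof first uses Lemma \ref{norline} together with the two smooth fibrations $F_1(X)\leftarrow \cM_{0,1}(X,1)\rightarrow X$ to show that the pull-back of $T F_1(X)$ to a fiber $(\PP^7)^s$ is trivial and that $t=k$, the number of $\cO(1)$-summands of $N_{L/X}$; it then applies the \emph{computation} of \cite[Lemma 4.2]{CK} with $N_{L/X}=\cO_{\PP^1}(1)^{\oplus k}\oplus\cO_{\PP^1}^{\oplus l}$ in place of $N_{L/\PP^r}$, the $\cO$-summands contributing nothing and each $\cO(1)$-summand contributing $\cO_{(\PP^7)^s}(-1)^{\oplus 2}$. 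You instead take the \emph{statement} for $\PP^r$ as a black box and restrict it along the clean intersection of Corollary \ref{trans2}, noting via \eqref{eq31} that the subbundle has constant fiber $V=H^0(N_{L/X}(-1))^{\oplus 2}$ (of dimension $2t$ by the definition of $t$, so you bypass the dimension count $t=k$ entirely), and conclude it is $V\otimes\cO(-1)$. The point you flag as the main obstacle is genuinely the crux of your version: a rank-$2t$ subbundle of $\cO_{(\PP^7)^s}(-1)^{\oplus 2(r-1)}$ need not split off as $\cO(-1)^{\oplus 2t}$ in general, so you really do need that the global identification of $N_{\Theta^1_0(\PP^r)/Q_0(\PP^r)}|_{(\PP^7)^s}$ with $H^0(N_{L/\PP^r}(-1))^{\oplus 2}\otimes\cO(-1)$ restricts at each point to the deformation-theoretic isomorphism \eqref{eq31}. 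That compatibility is exactly what the proof of \cite[Lemma 4.2]{CK} provides (the computation there is a relativization over the fiber of the pointwise identification, with the twisting bundle depending only on the universal family of degree-$3$ covers of $L$ and not on the ambient variety), and it is what the paper's phrase ``the rest of the proof follows directly from \cite[Lemma 4.2]{CK}'' is invoking; so your external input is of the same kind as the paper's, but you use the result plus naturality where the paper reruns the argument, which is why the paper needs Lemma \ref{norline} and $t=k$ while you do not.
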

\begin{proof}
The tangent space at $L$ in $F_1(X)$ has dimension $\dim H^0(L,N_{L/X})=2k+l$ by Lemma \ref{norline} and thus the pull-back of the tangent bundle of $F_1(X)$ to a fiber $\PP(\Sym^3(\CC^2)\otimes \CC^2)^s$ is $\cO^{\oplus 2k+l}$. The dimension of $F_1(X)$ is thus $2k+l=k+\dim X-1$ which must be equal to $t+\dim X-1$ from the smooth fibrations $F_1(X)\leftarrow \cM_{0,1}(X,1)\rightarrow X$ with fibers $\PP^1$ and $F_1(X,x)$ respectively. Therefore $t=k$.

The rest of the proof follows directly from \cite[Lemma 4.2]{CK}.
\end{proof}
From Lemma \ref{norbun}, an analytic neighborhood $\mathcal{U}^1(X)$
of $\Gamma_0^1(X)$ in $\bM(X)$ is a bundle over $F_1(X)$
with fiber
$$
\widetilde{\mathcal{U}}^1(X)=\cO_{\PP^7}(-1)^{\oplus 2t}/\!/SL(2).
$$
Note that $t$ may be different in different components $\bM(X,\beta)$
such that $i_*\frac{\beta}{3}=[\PP^1] \in H_2(\PP^r)$ (cf. Remark \ref{rem4.3}).
By blowing up, an analytic neighborhood of $\Gamma_1^1(X)$
in $\bM_1(X)$ is a bundle over $F_1(X)$ with fiber
\begin{equation}\label{eq34}
\widetilde{\mathcal{U}}_1^1(X)
:=\cO_{\PP^7 \times \PP^{2t-1}}(-1,-1)/\!/_{\cO(1, \lambda )}SL(2),\quad 0<\lambda <\!<1
\end{equation}
Now let $\lambda$ in \eqref{eq34} vary from $0^+$ to $\infty$.
This variation has been worked out in \cite[(4.19)]{CK}:
The GIT quotient undergoes two blow-ups and two blow-downs and
the two blow-ups coincide with the quotients of $q_2$ and $q_3$ by $SL(2)$.
Therefore we can blow down the inverse image $\cU^1_3(X)$ in $\bM_3(X)$ of $\cU^1(X)$ three times.
Likewise we can analyze a neighborhood of $\Gamma^2_0(X)$ to conclude that $\bM_3(X)$ can be blown-down three times
\[
\bM_3(X)\mapright{q_4} \bM_4(X)\mapright{q_5}\bM_5(X)\mapright{q_6}\bM_6(X).
\]
Then we can check that the morphism $\bar{\psi}_X:\bM_3(X)\to \bS(X)$ factors through a morphism $\bM_6(X)\to \bS(X)$ which is bijective. This is enough to conclude that $\bM_6(X)\cong\bS(X)$ and $\bS(X)$ is the consequence of three blow-downs from $\bM_3(X)$.
The details are exactly the same as the proof in \cite[\S4.4]{CK}.
In summary, we obtain the following.

\begin{theo}\label{thm2} Let $X$ be a projective homogeneous variety. Then
$\bS(X)=\bS(X,3)$ is obtained from $\bM(X)=\bM(X,3)$ by blowing up
along $\Gamma_0^1(X),\Gamma_1^2(X),\Gamma_2^3(X)$
and the blowing down along $\Gamma_3^2(X),\Gamma_4^3(X),\Gamma_5^1(X)$
where $\Gamma^j_i(X)$ is the
proper transform of $\Gamma^j_{i-1}(X)$ if $\Gamma^j_{i-1}(X)$ is not
the blow-up/-down center and the image/preimage of
$\Gamma^j_{i-1}(X)$ otherwise.
\end{theo}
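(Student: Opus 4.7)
The plan is to follow the blueprint established for $X=\PP^r$ in \cite{CK} and adapt it to the homogeneous setting, relying on the clean intersection results (Corollaries \ref{trans2} and \ref{trans3}) to transfer the key normal-bundle computations from $\PP^r$ to $X$. Proposition \ref{108211} already supplies the $SL(2)$-equivariant birational morphism $\bar\psi_X\colon \bM_3(X)\to \bS(X)$ as a composition of three blow-ups (along $\Gamma^1_0(X)$, $\Gamma^2_1(X)$, $\Gamma^3_2(X)$) interleaved with three elementary modifications, so the remaining work is to factor $\bar\psi_X$ as three successive blow-downs.

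To construct the blow-downs, I would analyze \'etale neighborhoods of the exceptional loci. By Lemma \ref{norbun}, the normal bundle of $\Theta^1_0(X)$ in $Q_0(X)$ along a fiber $\PP^7 = \PP(\Sym^3\CC^2\otimes\CC^2)^s$ is $\cO(-1)^{\oplus 2t}$, where $t=\dim\Ext^1_X(\cO_L,\cO_L(-1))$. After the first blow-up, a neighborhood of $\Gamma^1_1(X)$ is thus a bundle over $F_1(X)$ with fiber
\[
\widetilde{\cU}^1_1(X) \;=\; \cO_{\PP^7\times\PP^{2t-1}}(-1,-1)/\!/_{\cO(1,\lambda)}SL(2), \qquad 0<\lambda\ll 1,
\]
which has the same form as in the $\PP^r$ case with $t$ playing the role of $r-1$. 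Running the variation of GIT of \cite[(4.19)]{CK} verbatim, as $\lambda$ increases from $0^+$ to $\infty$ this quotient undergoes two further blow-ups (matching fiberwise the quotients of $q_2$ and $q_3$), then two blow-downs, and finally the resulting $\PP^7$-bundle for $\lambda\gg 1$ is contracted. A parallel analysis near $\Gamma^2_0(X)$, which uses the smoothness of the evaluation map from item (2) of Lemma \ref{assu5-1} to guarantee that $\Theta^2_0(X)$ is a locally trivial bundle over $\bB(X)$, supplies the remaining local contraction.

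Gluing the local blow-downs into global morphisms produces a sequence
\[
\bM_3(X)\mapright{q_4}\bM_4(X)\mapright{q_5}\bM_5(X)\mapright{q_6}\bM_6(X),
\]
whose successive exceptional loci are $\Gamma^2_3(X)$, $\Gamma^3_4(X)$ and $\Gamma^1_5(X)$; smoothness of the centers descends from the smoothness of the subvarieties $\Theta^j_i(X)$ verified in \S\ref{sec3.1}. Since each elementary modification was engineered so that $\bar\psi_X$ depends only on the isomorphism class of the resulting stable sheaf, $\bar\psi_X$ is constant on the fibers of every $q_i$ and hence descends to a birational morphism $\bM_6(X)\to \bS(X)$.

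The main obstacle will be promoting this descended morphism to an isomorphism. I would do so by checking bijectivity stratum by stratum: on each exceptional locus one verifies that the iterated contractions precisely collapse the families of $S$-equivalent sheaves produced by $\cE_3(X)$ to single isomorphism classes, and that on the complement $\bar\psi_X$ is already an isomorphism by construction. Combining this bijectivity with Zariski's main theorem on the normal projective variety $\bS(X)$ yields $\bM_6(X)\cong\bS(X)$. The novelty compared with \cite{CK} is essentially numerical: the invariants $t$, $\dim F_1(X,\beta/3)$, and $\dim F_1(X,\gamma)$ depend on the component $\beta\in H_2(X,\ZZ)$, but by Remark \ref{rem4.3} and Lemma \ref{norbun} these are constant on each irreducible component, and once identified the local and global steps above proceed verbatim from the projective case.
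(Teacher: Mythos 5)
Your proposal follows the paper's own route essentially verbatim: Proposition \ref{108211} provides $\bar\psi_X\colon\bM_3(X)\to\bS(X)$ after the three blow-ups and elementary modifications, Lemmas \ref{norline} and \ref{norbun} identify an analytic neighborhood of $\Gamma^1_1(X)$ with a bundle over $F_1(X)$ with fiber $\cO_{\PP^7\times\PP^{2t-1}}(-1,-1)/\!/_{\cO(1,\lambda)}SL(2)$, the variation of $\lambda$ as in \cite[(4.19)]{CK} together with the parallel analysis near $\Gamma^2_0(X)$ yields the three contractions $\bM_3(X)\to\bM_4(X)\to\bM_5(X)\to\bM_6(X)$, and the induced bijective morphism $\bM_6(X)\to\bS(X)$ is then promoted to an isomorphism, with the invariant $t$ constant on each component exactly as in Remark \ref{rem4.3}. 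The one step to adjust is the last: you invoke Zariski's main theorem for ``the normal projective variety $\bS(X)$'', but normality of $\bS(X)$ is not available at that stage (in the paper, smoothness of $\bS(X)$ is Proposition \ref{306} and is deduced \emph{from} this theorem), so the conclusion should instead be drawn as in \cite[\S 4.4]{CK}, where the explicit local GIT descriptions of neighborhoods are used to show directly that the bijective birational morphism $\bM_6(X)\to\bS(X)$ is an isomorphism rather than appealing to normality of the target.
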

\begin{rema}
To prove Theorem \ref{thm2}, we only used the items (1),(2) in Lemma \ref{assu5-1}.
Thus we can say that Theorem \ref{thm2} holds for any smooth projective variety $X$
satisfying conditions (1),(2) in Lemma \ref{assu5-1}.
\end{rema}

%%%%%%%%%%%%%%%%%%%%%%%%%%%%%%%%%%%%%%%%%

\subsection{Comparison of $\bS(X)$ and $\bH(X)$}\label{sec3.2}
The goal of this subsection is to show
that $\bH(X)$ is a smooth blow-up of $\bS(X)$
along the locus of the planar stable sheaves.
In \cite{CK}, when $X=\PP^r$, we have applied the Fujiki-Nakano criterion \cite{Fujiki}
for showing that the divisorial contraction $\bH(\PP^r)\lr \bS(\PP^r)$ is a smooth blow-down.
But for general $X$, it seems difficult to check that
$\bH(X)$ is smooth, and so we use the results of the previous subsection and Proposition 3.3 in \cite{CK} instead.
In this subsection only, we use all properties in Lemma \ref{assu5-1} for $X$.

Note that $\cU^1_i(X):=q_i(\cU^1_{i-1}(X)), (i=4,5,6)$ is a bundle over $F_1(X)$ by the construction of $q_i$.
\begin{prop}\label{306}
 $\bS(X)$ is smooth.
\end{prop}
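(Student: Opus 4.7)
The plan is to trace smoothness through the blow-up/blow-down chain established in Theorem \ref{thm2}:
\[\bM_3(X)\mapright{q_4}\bM_4(X)\mapright{q_5}\bM_5(X)\mapright{q_6}\bM_6(X)\cong \bS(X).\]
First, I would argue that $\bM_3(X)$ is smooth. The ambient space $Q_0(X)$ is an open subvariety of $\cM_0(\PP^1\times X,(1,3))$ on which the automorphism group of every stable map is trivial, hence smooth by the convexity of $\PP^1\times X$ (\cite[Theorem 2]{FP}), which follows from item (1) of Lemma \ref{assu5-1}. In \S\ref{sec3.1} the three successive blow-up centers $\Theta_0^1(X)$, $\Theta_1^2(X)$, $\Theta_2^3(X)$ were shown to be smooth using items (1)--(2) of Lemma \ref{assu5-1}, and since the $SL(2)$-action on the stable part has only trivial stabilizers, the quotient $\bM_3(X)=Q_3(X)/SL(2)$ is smooth.

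Next, I would verify that each of the contractions $q_4,q_5,q_6$ is a smooth blow-down, so that smoothness propagates along the chain. The strategy is local: restrict to an analytic neighborhood of the exceptional locus, where the contraction is modeled by one step in a variation of GIT quotients. By the analysis of \S\ref{sec3.1}, each such neighborhood is a fiber bundle over a smooth parameter space---$F_1(X)$ in the case of the $\Gamma^1$-neighborhood arising from $\widetilde{\cU}^1(X)$ in \eqref{eq34}, and an analogous bundle built from $F_1(X)$, $F_2(X)$, and $F_1(X,x)$ in the case of the $\Gamma^2$-neighborhood. All of these bases are smooth by items (1)--(3) of Lemma \ref{assu5-1}. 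The fiber-direction data (dimensions of the relevant $\Ext^1$-groups and the ranks of normal bundles computed in Lemmas \ref{norline}--\ref{norbun}) coincide with the $\PP^r$-case treated in \cite{CK}. Hence each exceptional divisor is a projective bundle over its smooth image whose normal bundle restricts to $\cO(-1)$ on every fiber, and the Fujiki--Nakano criterion---applied as in Proposition 3.3 of \cite{CK}---shows that each $q_i$ yields a smooth variety.

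The subtlest point, and the main obstacle, is the final contraction $q_6$ along $\Gamma_5^1(X)$, whose fibers are $\PP^7$. Here one needs to check that the local GIT model, obtained by letting $\lambda\to\infty$ in \eqref{eq34}, globalises to a smooth $\PP^7$-bundle over a smooth open subset of $\bM_6(X)$ with base an appropriate enlargement of $F_1(X)$, rather than merely fibering over $F_1(X)$ fiberwise. Once this last globalisation is secured (by exactly the argument in \cite[\S4.4]{CK} with $F_1(X)$ in place of $Gr(2,r+1)$), successive application of Fujiki--Nakano yields that $\bM_4(X)$, $\bM_5(X)$, and $\bM_6(X)$ are each smooth, and therefore so is $\bS(X)\cong\bM_6(X)$.
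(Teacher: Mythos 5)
Your strategy---establishing that $\bM_3(X)$ is smooth and then propagating smoothness through the three contractions by Fujiki--Nakano---runs into two genuine problems. First, the claim that the $SL(2)$-action on the stable locus has only trivial stabilizers is false: points corresponding to multiple covers of lines (the loci over $\Theta_0^1(X)$, $\Theta_1^3(X)$ and part of $\Theta_0^2(X)$) have nontrivial finite stabilizers ($\ZZ_2$, $\ZZ_3$), which is exactly why $\bM(X)$ has quotient singularities along $\Gamma_0^1(X)$ and why Theorems \ref{th1.2} and \ref{thm2} speak of \emph{weighted} blow-ups and blow-downs. So your argument that $\bM_3(X)$ is smooth is unjustified, and in fact the intermediate spaces carry finite quotient singularities along subloci of the exceptional divisors. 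Second, and decisively, $q_4$ and $q_5$ are not smooth blow-downs: the divisors $\Gamma_3^2(X)$ and $\Gamma_4^3(X)$ are contracted with weighted projective fibers $\PP^2_{(1,2,2)}$ and $\PP^4_{(1,2,2,3,3)}$ respectively (this is precisely how they enter the Poincar\'e polynomial computation in \S\ref{sec4}), so the hypotheses of the Fujiki--Nakano criterion---smooth ambient space, exceptional divisor a projective bundle with relative $\cO(-1)$ normal bundle---fail for them. Only the last contraction $q_6$ has honest $\PP^7$ fibers, so smoothness cannot be passed down the chain in the way you propose.

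The paper's proof avoids the intermediate spaces entirely and verifies smoothness of $\bS(X)$ directly. Away from $\bigcup_{j}\Gamma_6^j(X)$, Theorem \ref{thm2} identifies $\bS(X)$ with $\bM(X)-\Gamma_0^1(X)\cup\Gamma_0^2(X)$, which is smooth because there the automorphism groups are trivial and obstructions vanish by Lemma \ref{assu5-1}(1). Near $\Gamma_6^1(X)\cup\Gamma_6^3(X)$ one looks at the \emph{final} GIT chamber of the local model: a neighborhood is a bundle over $F_1(X)$ with fiber $\widetilde{\cU}^1_6(X)=\cO_{\PP^{2t-1}}(-1)^{\oplus 8}\git SL(2)$, which is a vector bundle over $Gr(2,t)$; in this chamber the stabilizers of stable points are trivial, so the fiber, and hence the neighborhood, is smooth even though the earlier chambers (i.e.\ the models of $\bM_1,\dots,\bM_5$) are not. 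Near $\Gamma_6^2(X)$ the analogous smoothness is read off from the analysis of the neighborhoods $\cU_i^2(\PP^r)$ in \cite[\S 4.4]{CK}. If you wish to salvage a contraction-based argument you would have to treat $q_4$ and $q_5$ as weighted (orbifold) blow-downs, which is substantially more delicate than an application of Fujiki--Nakano; the direct local GIT description is what makes the paper's proof work.
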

\begin{proof}
By Theorem \ref{thm2}, $\bS(X)-\bigcup_{j=1}^3 \Gamma_6^j$ is isomorphic to $\bM(X)-\Gamma^1_0(X)\cup\Gamma^2_0(X)$. By Lemma \ref{assu5-1} (1), $\bM(X)-\Gamma^1_0(X)\cup\Gamma^2_0(X)$ is smooth since by definition the automorphism groups are all trivial. For smoothness, near $\Gamma_6^1(X)$ and $\Gamma_6^3(X)$, we look at the variations $\cU^1_i(X)$ in the last part of \S\ref{sec3.1}. Since the fiber $\widetilde{\cU}^1_6(X)=\cO_{\PP^{2t-1}}(-1)^{\oplus 8}\git SL(2)$
of $\cU^1_6(X)$ over $F_1(X)$ is a vector bundle over $Gr(2,t)$ which is smooth, $\cU_6^1(X)$ is smooth as well.
Hence $\bS(X)$ is smooth in a neighborhood of $\Gamma_6^1(X)\cup \Gamma_6^3(X)$.
Similarly from the analysis of neighborhoods $\cU_i^2(\PP^r)$ of $\Gamma_i^2(\PP^r)$ in \cite[\S4.4]{CK}, it is immediate to check that $\cU^2_6(X)$ is smooth. Therefore,
$\bS(X)$ is indeed smooth everywhere.
\end{proof}

Let $\Delta(X)\subset \bS(X)$ be the locus of stable sheaves whose scheme theoretic support is contained in a plane.
When $X=\PP^r$, $\Delta (\PP^r)$ is a
$\bS(\PP^2)$-bundle over $Gr(3,r+1)$
as shown in the proof of \cite[Proposition 3.3]{CK}. More precisely, $$\Delta(\PP^r)=\bS(\PP\cU) \lra Gr(3,r+1)$$
where $\cU$ is the tautological rank 3 vector bundle on $Gr(3,r+1)$ and $\bS(\PP\cU)$ is the relative Simpson moduli space in the obvious sense.
In particular, each $F\in \Delta(\PP^r)$ is contained in a unique plane in $\PP^r$. For $X\subset \PP^r$, it is obvious that set-theoretically
$$\Delta (X)=\bS(X) \cap \Delta (\PP^r).$$

Item (4) of Lemma \ref{assu5-1} implies the following.
\begin{lemm}\label{lem822-1}
If $F\in\Delta(X)\subset \Delta(\PP^r)$, then the unique plane $\Lambda$ containing the support of $F$ is entirely contained in $X$.
\end{lemm}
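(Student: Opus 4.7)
The plan is to invoke property (4) of Lemma \ref{assu5-1}---that $I_X$ is generated by quadratic forms---combined with the elementary observation that a nonzero plane conic cannot vanish on a plane cubic curve.

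First I would identify the scheme-theoretic support of $F$ as a cubic curve inside $\Lambda$. By the description $\Delta(\PP^r)=\bS(\PP\cU)\lra Gr(3,r+1)$ recalled just above the lemma, the sheaf $F$, viewed on $\Lambda\cong\PP^2$, is a stable sheaf with Hilbert polynomial $3m+1$; by the classical description of $\bS(\PP^2,3)$ (cf.\ the proof of \cite[Proposition 3.3]{CK}), its scheme-theoretic support is cut out in $\Lambda$ by a single cubic form $f_C\in H^0(\Lambda,\cO_\Lambda(3))$, so the support is a (possibly singular or non-reduced) cubic curve $C\subset\Lambda$. Since $F$ is a coherent sheaf on $X$, this cubic $C$ lies in $X$.

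Next, I would argue by contradiction: assume $\Lambda\not\subset X$. By property (4) at least one quadratic generator $q\in I_X$ restricts to a nonzero section $\overline{q}:=q|_\Lambda\in H^0(\Lambda,\cO_\Lambda(2))$. Because $C\subset X\subset V(q)$, this conic $\overline{q}$ vanishes on $C$ and therefore must be divisible by $f_C$ in the homogeneous coordinate ring of $\Lambda\cong\PP^2$. Since $\deg\overline{q}=2<3=\deg f_C$, this forces $\overline{q}=0$, a contradiction. Hence every quadratic generator of $I_X$ vanishes on $\Lambda$, so $\Lambda\subset V(I_X)=X$, as desired.

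The only non-formal input is the first step: one must know that every stable sheaf in $\bS(\PP^2,3)$ has scheme-theoretic support equal to a plane cubic, so that its support cannot ``collapse'' onto a lower-degree curve in $\Lambda$. This is classical and is already implicit in the identification $\Delta(\PP^r)=\bS(\PP\cU)$ used in the construction. Once accepted, the remaining argument is pure degree counting and uses no property of $X$ beyond property (4) of Lemma \ref{assu5-1}.
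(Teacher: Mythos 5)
Your proof is correct and takes essentially the same route as the paper: both rest on Lemma \ref{assu5-1}(4) together with the degree count that a nonzero quadric restricted to $\Lambda\cong\PP^2$ cannot vanish on the degree-$3$ support of $F$, so every quadratic generator of $I_X$ vanishes on $\Lambda$ and hence $\Lambda\subset X$. The paper states this as ``$X\cap\Lambda$ is cut out by (restrictions of) quadrics and therefore cannot contain a cubic curve unless it equals $\Lambda$,'' and it likewise takes from the Hilbert polynomial $3m+1$ that the support of $F$ is a plane cubic, so the input you flagged is assumed at exactly the same level of detail in the paper's own argument.
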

\begin{proof}
By Lemma \ref{assu5-1} (4), the defining ideal of $X$ in $\PP^r$ is generated by quadratic polynomials. Therefore, the intersection of $X$ with a plane $\Lambda$ is a subvariety of $\PP^r$ whose defining ideal is generated by linear or quadratic polynomials only. Then it is obvious that $X\cap \Lambda$  cannot contain a cubic curve unless $X\cap \Lambda=\Lambda$, i.e. $\Lambda\subset X$. By our choice of Hilbert polynomial $3m+1$, the support $C$ of $F$ is a cubic curve in $X\cap \Lambda$ where $\Lambda$ is the unique plane containing $C$. Therefore, $\Lambda\subset X$ as desired.
\end{proof}

An immediate corollary of Lemma \ref{lem822-1} is the following.
\begin{coro}\label{348}
\begin{enumerate}
  \item $\Delta (X)$ is a $\bS(\PP^2)$-bundle over $F_2(X)$.
  \item $\Delta(\PP^r)$ intersects with $\bS(X)$ cleanly.
\end{enumerate}
\end{coro}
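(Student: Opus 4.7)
The plan is to combine the bundle structure of $\Delta(X)$ from (1) with a tangent-space computation at each $F\in\Delta(X)$ to verify the two conditions of Definition-Proposition \ref{def-pro}.

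For (1), I would use Lemma \ref{lem822-1} to observe that sending $F$ to the unique plane $\Lambda$ containing its support defines a morphism $\pi:\Delta(X)\to F_2(X)$. Let $\cW$ denote the pullback under $F_2(X)\hookrightarrow F_2(\PP^r)=Gr(3,r+1)$ of the tautological rank-$3$ subbundle. The relative Simpson moduli space of pure sheaves with Hilbert polynomial $3m+1$ on the fibers of $\PP\cW\to F_2(X)$ is a $\bS(\PP^2,3)$-bundle, and pushforward along the fiberwise inclusion $\PP\cW\hookrightarrow F_2(X)\times X$ identifies it with $\Delta(X)$. Equivalently, one can write $\Delta(X)\cong\Delta(\PP^r)\times_{Gr(3,r+1)}F_2(X)$ and deduce smoothness by base change. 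In either description, smoothness of $\Delta(X)$ follows from smoothness of $F_2(X)$ (Lemma \ref{assu5-1}(3)) and of $\bS(\PP^2,3)$ (Proposition \ref{306} applied to the homogeneous variety $\PP^2$).

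For (2), the set-theoretic equality $\Delta(X)=\bS(X)\cap\Delta(\PP^r)$ follows directly from Lemma \ref{lem822-1}. Together with smoothness of $\Delta(X)$ from (1), of $\bS(X)$ from Proposition \ref{306}, and of $\Delta(\PP^r)$ as a $\bS(\PP^2,3)$-bundle over $Gr(3,r+1)$, it remains to verify the tangent-space identity
\[
T_F\Delta(X)=T_F\bS(X)\cap T_F\Delta(\PP^r)\quad\text{inside}\quad T_F\bS(\PP^r)=\Ext^1_{\PP^r}(F,F)
\]
at every $F\in\Delta(X)$. The bundle structures above yield short exact sequences
\[
0\to\Ext^1_\Lambda(F,F)\to T_F\Delta(\PP^r)\to H^0(N_{\Lambda/\PP^r})\to 0,
\]
\[
0\to\Ext^1_\Lambda(F,F)\to T_F\Delta(X)\to H^0(N_{\Lambda/X})\to 0,
\]
where the surjections record the first-order variation of the support plane. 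Given $v\in T_F\bS(X)\cap T_F\Delta(\PP^r)$, decompose $v$ via the top sequence as the image of a pair $(\bar v,w)$ with $\bar v\in H^0(N_{\Lambda/\PP^r})$ and $w\in\Ext^1_\Lambda(F,F)$. The condition $v\in T_F\bS(X)$ forces the first-order deformed support to lie in $X$, which means $\bar v$ must come from $H^0(N_{\Lambda/X})\subset H^0(N_{\Lambda/\PP^r})$. Hence $v\in T_F\Delta(X)$, establishing clean intersection.

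The technical heart of the argument is justifying the two short exact sequences compatibly inside $T_F\bS(\PP^r)$ and identifying the surjection $T_F\Delta(\PP^r)\to H^0(N_{\Lambda/\PP^r})$ as the ``plane-deformation'' part of $v$. I would carry this out via the local-to-global spectral sequence
\[
H^p\bigl(\Lambda,\mathcal{E}xt^q_{\PP^r}(F,F)\bigr)\Rightarrow\Ext^{p+q}_{\PP^r}(F,F)
\]
and the standard identification $\mathcal{E}xt^1_{\PP^r}(\cO_\Lambda,\cO_\Lambda)\cong N_{\Lambda/\PP^r}$ (and its analogue on $X$), invoking naturality of the edge homomorphisms under $X\hookrightarrow\PP^r$. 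I expect this Ext-sheaf bookkeeping, rather than any conceptual difficulty, to be the main obstacle.
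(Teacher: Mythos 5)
Your part (1), the set-theoretic identification $\Delta(X)=\bS(X)\cap\Delta(\PP^r)$, and the reduction of (2) to a tangent-space identity are all fine and essentially what the paper does: the paper states (1) as the Cartesian square $\Delta(X)\cong\Delta(\PP^r)\times_{Gr(3,r+1)}F_2(X)$ and proves (2) by viewing a tangent vector $v\in T_{\Delta(\PP^r),F}$ as a map from $\spec\CC[\epsilon]/(\epsilon^2)$ and showing it factors through $\Delta(X)$.

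The genuine gap is the sentence ``the condition $v\in T_F\bS(X)$ forces the first-order deformed support to lie in $X$, which means $\bar v$ must come from $H^0(N_{\Lambda/X})$.'' The second half of that implication is precisely the step that needs proof, and it is exactly where the paper invokes Lemma \ref{assu5-1}(4); note that the quadric-generation of $I_X$ never enters your argument for (2), yet without it the implication fails. What the hypothesis $v\in T_F\bS(X)$ gives you is, at best, that the class of $\bar v$ modulo $N_{\Lambda/X}$ yields a section of $N_{X/\PP^r}|_\Lambda$ vanishing along the cubic support curve $C$; you need it to vanish on all of $\Lambda$. Since $C$ is a degree-$3$ divisor in $\Lambda\cong\PP^2$, this amounts to $H^0\bigl(N_{X/\PP^r}|_\Lambda\otimes\cO_\Lambda(-3)\bigr)=0$, which holds because $I_X$ is generated by quadrics (so $N_{X/\PP^r}|_\Lambda$ embeds in a direct sum of copies of $\cO_\Lambda(2)$), but would fail if, say, $X$ were cut out by cubics --- the same phenomenon ruled out set-theoretically by Lemma \ref{lem822-1}. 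The paper closes this step by rerunning the argument of Lemma \ref{lem822-1} over $\spec\CC[\epsilon]/(\epsilon^2)$: trivializing the tautological bundle gives a family of planes $\mu$ and a family $\mu_*\cF$ whose support lies in $X\times\spec\CC[\epsilon]/(\epsilon^2)$, and the quadric argument forces the entire family of planes into $X\times\spec\CC[\epsilon]/(\epsilon^2)$, whence $v\in T_F\Delta(X)$. (A secondary caution: the support of a flat family of sheaves over the dual numbers is not itself a flat family of curves, so ``the deformed support lies in $X$'' should be handled as the paper does, through the family of planes and the annihilator of $\mu_*\cF$, rather than by restricting normal fields to $C$ as if $C$ deformed flatly.) The Ext-sheaf and spectral-sequence bookkeeping you flag as the main obstacle is not the issue; the missing ingredient is the first-order analogue of Lemma \ref{lem822-1}, i.e.\ Lemma \ref{assu5-1}(4).
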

\begin{proof} Let $F_2(X)\subset F_2(\PP^r)=Gr(3,r+1)$ denote the moduli space of all planes in $X$. Then Lemma \ref{lem822-1} gives us a Cartesian diagram
\[\xymatrix{
\Delta(X)\ar@{^(->}[r]\ar[d] &\Delta(\PP^r)\ar[d]\\
F_2(X)\ar@{^(->}[r] & Gr(3,r+1)
}\]
which is exactly (1).

By Lemma \ref{assu5-1} (3), $\Delta(X)$ is smooth.
To show that $T_{\Delta(\PP^r),F}\cap T_{\bS(X),F}= T_{\Delta(X),F}$, it suffices to show that
$$T_{\Delta(X),F}= ker\left( T_{\Delta(\PP^r),F}\hookrightarrow T_{\bS(\PP^r),F}
\rightarrow N_{\bS(X)/\bS(\PP^r),F}\right).$$
Let $v\in T_{\Delta(\PP^r),F}$ be a morphism $$\spec\CC[\epsilon]/(\epsilon^2)\to \Delta(\PP^r) \to Gr(3,r+1).$$
By trivializing the tautological bundle of $Gr(3,r+1)$ over $\spec\CC[\epsilon]/(\epsilon^2)$, we obtain a flat family $\cF$ of sheaves
on $\PP^2\times \spec\CC[\epsilon]/(\epsilon^2)$ and a closed immersion $\mu:\PP^2\times \spec\CC[\epsilon]/(\epsilon^2)\to \PP^r\times \spec\CC[\epsilon]/(\epsilon^2)$ such that $\mu_*\cF$ is the family of stable sheaves on $\PP^r\times \spec\CC[\epsilon]/(\epsilon^2)$ given by $v$. Suppose $v\in ker( T_{\Delta(\PP^r),F}
\rightarrow N_{\bS(X)/\bS(\PP^r),F})$. Then $\mu_*\cF$ has support in $X\times \spec\CC[\epsilon]/(\epsilon^2)$. By the argument of the previous paragraph, the image of $\mu$ has to lie entirely in $X\times \spec\CC[\epsilon]/(\epsilon^2)$. This implies that $v:\spec\CC[\epsilon]/(\epsilon^2)\to \Delta(\PP^r)$ factors through $\Delta(X)$, i.e. $v\in T_{\Delta(X),F}$ as desired.
\end{proof}

Let $\widetilde{\bS}(X)$ be the blow-up of $\bS(X)$ along $\Delta(X)$.
Then by Corollary \ref{348}, $\widetilde{\bS}(X)$ is the proper transform of $\bS(X)$ via the blow-up $\bH(\PP^r)\to \bS(\PP^r)$. On the other hand, $\bH(X)$ is also a proper transform of $\bS(X)$ by its definition as the closure of the locus of smooth curves. Therefore $\bH(X)=\widetilde{\bS}(X)$.

In summary we have the following.
\begin{theo}\label{305}
Let $X$ be a projective homogeneous variety in $\PP^r$. Then
$\bH(X)$ is the smooth blow-up of $\bS(X)$ along the locus $\Delta (X)$ of
planar stable sheaves on $X$.
Moreover there exists a commutative diagram
$$\xymatrix{\bH(X)\ar[d]\ar@{^(->}[r]&\bH(\PP^r)\ar[d]\\
\bS(X)\ar@{^(->}[r]&\bS(\PP^r)}$$
of blow-ups.
\end{theo}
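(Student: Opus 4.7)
The plan is to reduce the assertion to the already-established case $X=\PP^r$ (Theorem \ref{th1.2} (1)) via the clean-intersection principle of Definition-Proposition \ref{def-pro}. The three ingredients needed are: (a) the map $\bH(\PP^r)\to \bS(\PP^r)$ is the smooth blow-up along $\Delta(\PP^r)$; (b) $\bS(X)$ and $\Delta(X)$ are smooth and $\Delta(\PP^r)$ meets $\bS(X)$ cleanly in $\bS(\PP^r)$ along $\Delta(X)$; (c) $\bH(X)$ is realized as the proper transform of $\bS(X)$ under the $\PP^r$-level blow-up. Item (a) is exactly Theorem \ref{th1.2} (1), and item (b) is furnished by Proposition \ref{306} together with Corollary \ref{348}.

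With (a) and (b) in hand, the body of the argument is immediate. Applying Definition-Proposition \ref{def-pro} (2) with ambient space $P=\bS(\PP^r)$, subvarieties $A=\bS(X)$ and $B=\Delta(\PP^r)$, and clean intersection $U=\Delta(X)$, the smooth blow-up $\widetilde{\bS}(X)$ of $\bS(X)$ along $\Delta(X)$ coincides with the proper transform of $\bS(X)$ under $\bH(\PP^r)\to\bS(\PP^r)$. To identify $\bH(X)$ with this proper transform, I would use that the smooth locus $\bR(X,3)$ is contained in $\bS(X)\setminus\Delta(X)$, over which the morphism $\bH(\PP^r)\to\bS(\PP^r)$ is an isomorphism. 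The closure $\bH(X)$ of $\bR(X,3)$ inside $\bH(\PP^r)$ therefore maps birationally onto $\bS(X)$, is contained in the proper transform $\widetilde{\bS}(X)$, and by irreducibility and dimension count agrees with it. The commutative square at the end of the theorem is then tautological: restricting the blow-down $\bH(\PP^r)\to\bS(\PP^r)$ to $\bH(X)=\widetilde{\bS}(X)$ yields the blow-down $\bH(X)\to\bS(X)$, and the horizontal arrows are the natural inclusions induced by $X\hookrightarrow\PP^r$.

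The hard part is establishing (b), specifically the clean-intersection statement, since once this is in place the rest of the proof is formal. This is already dispatched in Corollary \ref{348}, and it is there that item (4) of Lemma \ref{assu5-1} (quadratic generation of the ideal $I_X$) becomes essential: it forces the unique plane carrying the support of any sheaf in $\Delta(X)\subset\Delta(\PP^r)$ to lie entirely inside $X$, so that $\Delta(X)$ is precisely the planar Simpson locus in $\bS(X)$ and no spurious component appears in the set-theoretic intersection. The tangent-space equality $T_F\Delta(X)=T_F\Delta(\PP^r)\cap T_F\bS(X)$ is then a direct deformation-theoretic check using the smoothness of $F_2(X)$ from item (3) of Lemma \ref{assu5-1}, and smoothness of $\bS(X)$ from Proposition \ref{306} ensures that one is genuinely blowing up a smooth subvariety of a smooth ambient space.
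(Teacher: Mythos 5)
Your proposal is correct and follows essentially the same route as the paper: establish smoothness of $\bS(X)$ (Proposition \ref{306}) and the clean intersection $\Delta(\PP^r)\cap\bS(X)=\Delta(X)$ (Corollary \ref{348}, resting on items (3) and (4) of Lemma \ref{assu5-1}), then invoke Definition-Proposition \ref{def-pro} to identify the blow-up of $\bS(X)$ along $\Delta(X)$ with the proper transform under $\bH(\PP^r)\to\bS(\PP^r)$, and finally identify $\bH(X)$ with that proper transform via its definition as the closure of the locus of smooth rational curves. The paper's own argument is exactly this, stated more tersely.
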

\begin{rema}
Theorem \ref{305} holds for any smooth projective variety satisfying all items in Lemma \ref{assu5-1}.
\end{rema}
In section \S4, we will use Theorems \ref{thm2} and \ref{305}
to calculate the Poincar\'e polynomials of $\bS(X)$ and $\bH(X)$ when $X=Gr(k,n)$.

%%%%%%%%%%%%%%%%%%%%%%%%%%%%%%%%%%%%%%%%%%%%%%%%%%%%%%%%%%%%%%%%%%%%%%%%%%%%%%%%%%%%%%%%%%%%%%%%%%%%%
%                                              section 4                                            %
%                                                                                                   %
%%%%%%%%%%%%%%%%%%%%%%%%%%%%%%%%%%%%%%%%%%%%%%%%%%%%%%%%%%%%%%%%%%%%%%%%%%%%%%%%%%%%%%%%%%%%%%%%%%%%%

\section{Calculation of the Poincar\'e polynomials }\label{sec4}
For a variety $Z$, let
\[ P(Z)=\sum_{i\ge 0} \dim_{\QQ} H^{i}(Z,\QQ) q^{i/2} \]
be the Poincar\'e polynomial of $Z$. For every variety below, the odd degree cohomology will be trivial and thus $P(Z)$ will be a polynomial.
In this section, we calculate the Poincar\'e polynomials of $\bS(X,d)$ and $\bH(X,d)$ for $d=2,3$ when $X$ is the Grassmannian $Gr(k,n)$ of $k$ dimensional subspaces in $\CC^n$ with $k< n$.

We begin with the following lemma.
\begin{lemm}\label{polem}
\begin{enumerate}
\item $P(F_1(Gr(k,n)))=\prod_{i=1}^{k+1} \frac{1-q^{n-i+1}}{1-q^i}\cdot \prod_{i=1}^{k-1}
\frac{1-q^{k-i+2}}{1-q^i}.$
  \item $F_2(Gr(k,n))$
  is the disjoint union of two nonsingular varieties;
 a $Gr(k-2,k+1)$-bundle over $Gr(k+1,n)$ and a $Gr(k-1,k+2)$-bundle over $Gr(k+2,n)$.
  \item Let $\mathrm{ev}:\cM_{0,1}(Gr(k,n),1) \rightarrow Gr(k,n) $ be the
  evaluation map at the marked point so that $\mathrm{ev}^{-1}(x)=F_1(Gr(k,n),x)$. Then $P(\mathrm{ev}^{-1}(x))=\frac{(1-q^{n-k})(1-q^k)}{(1-q)^2}$.
\end{enumerate}
\end{lemm}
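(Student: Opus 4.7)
The plan is to identify each of the three spaces with a partial flag variety (or Grassmannian bundle) and then assemble the Poincar\'e polynomial from the standard Gaussian binomial formula $P(Gr(d,N))=\prod_{i=1}^d (1-q^{N-i+1})/(1-q^i)$.

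For (1), I would use the classical identification of lines in $Gr(k,n)$ under the Pl\"ucker embedding with pairs $(V_{k-1}, V_{k+1})$ of subspaces of $\CC^n$ of dimensions $k-1$ and $k+1$ satisfying $V_{k-1}\subset V_{k+1}$; the corresponding line is the pencil $\{V:V_{k-1}\subset V\subset V_{k+1}\}$. So $F_1(Gr(k,n))$ is the two-step flag variety parameterizing such chains. Projecting onto the $(k+1)$-factor exhibits $F_1(Gr(k,n))$ as a Zariski-locally trivial $Gr(k-1,k+1)$-bundle over $Gr(k+1,n)$, both of which have only even cohomology; multiplicativity of the Poincar\'e polynomial under such a fibration together with the Gaussian binomial formula yields the stated product. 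For (3), a line through $x=V_k$ is defined by a pair $(V_{k-1},V_{k+1})$ with $V_{k-1}\subset V_k\subset V_{k+1}$, and the two choices are independent: $V_{k-1}\subset V_k$ is a hyperplane of $V_k$, giving $\PP(V_k^\vee)\cong \PP^{k-1}$, and $V_{k+1}/V_k\subset \CC^n/V_k$ is a line, giving $\PP(\CC^n/V_k)\cong \PP^{n-k-1}$. Hence $\mathrm{ev}^{-1}(x)\cong \PP^{k-1}\times \PP^{n-k-1}$, whose Poincar\'e polynomial is $(1-q^k)(1-q^{n-k})/(1-q)^2$.

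For (2), I would appeal to the classical classification of planes in the Grassmannian. Every linear subspace of $Gr(k,n)$ (under Pl\"ucker) lies inside one of the two families of maximal linear subspaces, namely the $\alpha$-spaces $\{V:V_{k-1}\subset V\}\cong \PP^{n-k}$ or the $\beta$-spaces $\{V:V\subset V_{k+1}\}\cong \PP^k$. Reading off $\PP^2$s inside each of these maximal subspaces gives exactly the two Schubert families: (a) $\{V:V_{k-1}\subset V\subset V_{k+2}\}\cong Gr(1,V_{k+2}/V_{k-1})\cong \PP^2$, parameterized by the two-step flag variety of pairs $(V_{k-1},V_{k+2})$ with $V_{k-1}\subset V_{k+2}$, which fibers over $Gr(k+2,n)$ with fiber $Gr(k-1,k+2)$; and (b) $\{V:V_{k-2}\subset V\subset V_{k+1}\}\cong Gr(2,V_{k+1}/V_{k-2})\cong \PP^2$, parameterized by $(V_{k-2},V_{k+1})$ and fibering over $Gr(k+1,n)$ with fiber $Gr(k-2,k+1)$. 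The two families are disjoint because the $\CC^n$-span of the $k$-planes in family (a) has dimension $k+2$ while in family (b) it has dimension $k+1$, and both moduli spaces are smooth as two-step flag varieties (the projection to the large factor is a Grassmannian bundle with smooth fiber and smooth base).

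The main obstacle is the classification step in (2): proving that the two Schubert families exhaust all planes in $Gr(k,n)$. This rests on the classical analysis, via the Pl\"ucker relations applied to decomposable wedges, showing that every maximal projective linear subspace of $Gr(k,n)$ is of one of the two types $\alpha$ or $\beta$ above, so that an arbitrary embedded $\PP^2$ must lie in a maximal one and therefore coincide with a Schubert plane of the corresponding type. Once this structural input is granted, (1)--(3) become routine identifications followed by a straightforward substitution into the Gaussian binomial formula.
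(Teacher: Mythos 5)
Your argument is correct and reaches all three formulas, but it diverges from the paper in two of the three parts, so a comparison is worthwhile. For (1) you do exactly what the paper does: identify lines with flags $V_{k-1}\subset V_{k+1}$ (the paper cites Harris, Exercise 6.9) and read off $F_1(Gr(k,n))$ as a $Gr(k-1,k+1)$-bundle over $Gr(k+1,n)$. For (2) the paper gives no argument at all — it simply cites Landsberg--Manivel, Theorem 4.9 — whereas you sketch the classical self-contained proof via the Pl\"ucker embedding: every linear subspace of $Gr(k,n)$ lies in an $\alpha$-space $\{V: V_{k-1}\subset V\}$ or a $\beta$-space $\{V:V\subset V_{k+1}\}$, so planes form the two Schubert families $\{V_{k-1}\subset V\subset V_{k+2}\}$ and $\{V_{k-2}\subset V\subset V_{k+1}\}$, which are two-step flag varieties (hence smooth) and are disjoint since the spans have dimensions $k+2$ and $k+1$ respectively; this is a legitimate alternative, with the caveat you yourself flag that the exhaustiveness of the two types of maximal linear subspaces is precisely the classical input being outsourced to the citation in the paper. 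For (3) the paper never identifies the fiber: it uses the two fibrations $F_1(X)\leftarrow \cM_{0,1}(X,1)\rightarrow X$, with fibers $\PP^1$ and $F_1(X,x)$, multiplicativity of Poincar\'e polynomials, and then divides, getting $P(\mathrm{ev}^{-1}(x))=P(\PP^1)P(Gr(k-1,k+1))P(Gr(k+1,n))/P(Gr(k,n))$; you instead identify $F_1(Gr(k,n),x)\cong\PP^{k-1}\times\PP^{n-k-1}$ directly from the flag description $V_{k-1}\subset V_k\subset V_{k+1}$. Your route gives slightly more information (the actual isomorphism type of the fiber), the paper's avoids any further geometry at the cost of invoking multiplicativity twice; both give $(1-q^k)(1-q^{n-k})/(1-q)^2$.
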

\begin{proof}
(1) A line in $Gr(k,n)$ is the space of all $k$ dimensional subspaces
which is contained in a fixed $k+1$ dimensional subspace in $\CC^n$
and contains a fixed $k-1$ dimensional subspace by \cite[Exercise 6.9]{Harris}.
Therefore $F_1(Gr(k,n))$ is
a $Gr(k-1,k+1)$-bundle over $Gr(k+1,n)$.
Hence (1) follows from the well-known formula
$$
P(Gr(k,n))= \prod_{i=1}^k \frac{1-q^{n-i+1}}{1-q^i}.
$$

(2) See \cite[Theorem 4.9]{Landsberg}.

(3) Let $X=Gr(k,n)$. The two fibrations
$$
\xymatrix{\cM_{0,1}(X,1) \ar[r] \ar[d] & X \\
F_1(X)&}
$$
give rise to
$$P(\mathrm{ev}^{-1}(x))\cdot P(Gr(k,n)) = P(\PP^1)\cdot P(Gr(k-1,k+1)) \cdot P(Gr(k+1,n)).$$
Therefore (3) follows from
$$P(\mathrm{ev}^{-1}(x))= \frac{P(\PP^1)\cdot P(Gr(k-1,k+1)) \cdot P(Gr(k+1,n))}{P(Gr(k,n))}.$$
\end{proof}

The Poincar\'e polynomials of $\bM(Gr(k,n),d)$ for $d=2,3$ were calculated by A. L\'opez-Mart\'in as follows.
\begin{theo} \cite{Martin}\label{thm40}
(1) The Poincar\'e polynomial of $\bM(Gr(k,n),2)$ is
$$\frac{((1+q^n)(1+q^3)-q(1+q)(q^k+q^{n-k}))\prod_{i=k}^{n}
(1-q^i)}{(1-q)^2(1-q^2)^2 \prod_{i=1}^{n-k-1} (1-q^i)}.$$
(2) The Poincar\'e polynomial of $\bM(Gr(k,n),3)$ is
\small$$\frac{F_1(q) (1+q^{2n})+(1+q)^2(F_2(q) q^n (1+q^2)-F_3(q) q (1+q^n )(q^k +q^{n-k}))+F_4(q) q^2 (q^{2k} +q^{2n-2k})}{(1-q)(1-q^2)^2 (1-q^3)^2}$$
$$\cdot  P(Gr(k+1,n))\cdot P(Gr(k-1,k+1))$$\normalsize
where \[ F_1(q)=1+2q^2+3q^3+3q^4-q^5+q^6-3q^7-3q^8-2q^9-q^{11}, \]
\[ F_2(q)=1+5q^2+2q^3-2q^4-5q^5-q^7, \]
\[ F_3(q)=2+3q^2+q^3-q^4-3q^5-2q^7,\]
\[ F_4(q)=1+6q+3q^2+2q^3-2q^4-3q^5-6q^6-q^7.  \]
\end{theo}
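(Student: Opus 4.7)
The plan is to compute the Poincaré polynomial by Białynicki-Birula localization with respect to the standard torus action. The diagonal torus $T=(\CC^\ast)^n$ on $\CC^n$ induces actions on $Gr(k,n)$ and, by post-composition, on $\bM(Gr(k,n),d)$. Since $Gr(k,n)$ is convex, the obstruction space in Proposition \ref{litian} vanishes and $\bM(Gr(k,n),d)$ has at worst finite quotient singularities, so the argument that builds a cellular decomposition on a smooth projective variety goes through up to passing to a smooth cover or the inertia stack. I would thus aim for a decomposition
\[ P(\bM(Gr(k,n),d))=\sum_F q^{d^+(F)}\,P(F), \]
where $F$ runs over connected components of the $T$-fixed locus and $d^+(F)$ records the dimension of the attracting directions.

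The fixed components are classified by Kontsevich's graph combinatorics: a $T$-fixed stable map $f:C\to Gr(k,n)$ sends each non-contracted component of $C$ as a cyclic cover onto a $T$-invariant Schubert line, fully ramified over its two $T$-fixed endpoints. So the fixed locus is indexed by decorated trees whose vertices are labeled by the $\binom{n}{k}$ coordinate $k$-planes, whose edges are Schubert lines with attached multiplicities summing to $d$, possibly with contracted rational tails carrying continuous moduli (marked points on $\PP^1$). For $d=2$ these are a doubly-covered line, a pair of incident lines, or a line with one contracted tail; for $d=3$ one enumerates triply-covered lines, line-plus-double-line configurations, three-line chains and stars, and degenerations with contracted tails.

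For each decorated tree, the fixed component is either a point, a moduli of $T$-invariant pointed configurations, or a product of such, and its Poincaré polynomial follows from Lemma \ref{polem} together with standard Schubert calculus on $Gr(k,n)$. The attracting dimension $d^+(F)$ is read off from the $T$-weights on the tangent space in Proposition \ref{litian}, i.e., on $H^0(f^\ast T_{Gr(k,n)})$ modulo the contributions of $\Omega_C$ and infinitesimal automorphisms of $f$. The contribution of each tree is then a monomial times an explicit Poincaré polynomial of Grassmannians and products, which one sums.

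The main obstacle is the combinatorics and the simplification. For $d=3$ the number of decorated-tree types is sizeable, and each $d^+(F)$ must be computed by carefully splitting source-curve deformations from target directions normal to the image and keeping track of signs of weights, which depend on the two coordinates indexing each Schubert line. Reorganizing the resulting sum—over fixed components parametrised by tuples of indices from $\{1,\dots,n\}$—into the compact rational function with numerator polynomials $F_1(q),\dots,F_4(q)$ claimed in the theorem is likely to require either a generating-function manipulation or a direct comparison with the known $P(\bM(\PP^r,d))$ from \cite{Kiem,CK} via the Plücker embedding $Gr(k,n)\subset \PP(\wedge^k\CC^n)$, isolating the Grassmannian-specific factors by Schubert-type identities.
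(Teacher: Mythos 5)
First, a point of reference: the paper does not prove this statement. Theorem \ref{thm40} is imported from \cite{Martin} and used only as input for Corollaries \ref{thm41} and \ref{thm42}, so there is no internal proof to compare against; the question is whether your outline would stand as a proof on its own. As written it does not, because the entire content of the theorem is the explicit closed formulas --- in particular the polynomials $F_1,\dots,F_4$ and the degree $2$ expression --- and your proposal stops exactly where that content begins. You set up a localization/Bia{\l}ynicki-Birula framework, but the enumeration of the fixed-locus graphs, the computation of the attracting dimensions $d^+(F)$, and the resummation of the resulting weighted tree count into the stated rational functions are all deferred (``the main obstacle''). Until that is carried out, or at least until a mechanism is exhibited that visibly produces $F_1,\dots,F_4$, nothing asserted in the theorem has been verified; what you have is a plausible plan, of the same general kind used for $\PP^r$ by Getzler--Pandharipande and for Grassmannians in \cite{Martin}, not a proof.

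Two steps in the plan also need repair before the computation could even start. (i) $\bM(Gr(k,n),d)$ is only the coarse space of a smooth proper Deligne--Mumford stack and has finite quotient singularities; the BB decomposition you invoke is a theorem about smooth projective varieties, and ``passing to a smooth cover or the inertia stack'' is not an argument. The standard fixes are to run the decomposition at the stack level and use that rational cohomology of the coarse space agrees with that of the stack, or to count points over finite fields and appeal to purity; one of these must be stated and justified. (ii) Your description of the fixed loci is partly wrong: for a generic one-parameter subgroup of the diagonal torus, the fixed stable maps of degree $d\le 3$ with no marked points are rigid --- a contracted component can only occur at a vertex of valence $3$, contributing $\overline{M}_{0,3}$, a point --- so every fixed component is a point (up to automorphisms), there are no ``contracted tails carrying continuous moduli,'' and Lemma \ref{polem}, which concerns the positive-dimensional families $F_1(Gr(k,n))$, $F_2(Gr(k,n))$ and fibers of the evaluation map, plays no role in computing $P(F)$. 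The whole answer is a weighted count of decorated trees labeled by coordinate $k$-planes and $T$-invariant lines; organizing that count into the claimed formulas is the actual work, and it is absent from the proposal.
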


%%%%%%%%%%%%%%%%%%%%%%%%%%%%%
\subsection{d=2 case}
By the comparison result of \S2 and the blow-up formula (\cite[p.605]{GriffithsHarris}), we obtain the following.
\begin{coro}\label{thm41}
\small
$$P(\bS(Gr(k,n),2))=\frac{[(1+q^n)(1+q^3)-q(1+q)(q^k+q^{n-k})+(1-q^2)(q^3-q^{n-2})]\prod_{i=n-k}^{n} (1-q^i)}{(1-q)^2(1-q^2)^2\prod_{i=1}^{k-1} (1-q^i)},$$
\normalsize
where $\prod_{i=1}^{0} (1-q^i)$ is defined to be $1$.
\end{coro}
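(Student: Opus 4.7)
The plan is to exploit the blow-up diagram in Theorem \ref{thm1}, which exhibits $\bM_1(X)$ simultaneously as a blow-up of $\bM(X)$ along $\Gamma^1(X)$ and as a blow-up of $\bS(X)$ along $\Gamma^1_2(X)$. Applying the blow-up formula for Poincar\'e polynomials (\cite[p.~605]{GriffithsHarris}) to both sides and subtracting, I obtain
\[
P(\bS(X)) = P(\bM(X)) + \Bigl(\sum_{i=1}^{c_1-1} q^i\Bigr) P(\Gamma^1(X)) - \Bigl(\sum_{i=1}^{c_2-1} q^i\Bigr) P(\Gamma^1_2(X)),
\]
where $c_1$ and $c_2$ denote the codimensions of $\Gamma^1(X)$ in $\bM(X)$ and of $\Gamma^1_2(X)$ in $\bS(X)$ respectively.

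The required geometric inputs are that $\Gamma^1(X)$ is a $\PP^2$-bundle over $F_1(X)$ (shown in \S\ref{sec2.1}) and $\Gamma^1_2(X)$ is a $\PP(\Ext^1_X(\cO_L,\cO_L(-1)))$-bundle over $F_1(X)$, as established in the proof of the blow-down proposition in \S\ref{sec2.2}. For the codimensions, the isomorphism \eqref{eq-7} together with Remark \ref{rem3.5} identifies the rank of the normal bundle of $\Gamma^1(X)$ in $\bM(X)$ with $\dim \Ext^1_X(\cO_L,\cO_L(-1)) = \int_{\beta/2} c_1(T_X) - 2$. Since the Fano index of $Gr(k,n)$ is $n$, this gives $c_1 = n-2$, and accordingly $\Gamma^1_2(X)$ is a $\PP^{n-3}$-bundle. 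The projection $\Gamma^1_1(X) \to \Gamma^1_2(X)$ has fibers $\PP^2$ while $\Gamma^1_1(X)$ is a divisor in $\bM_1(X)$, whence $c_2 = 3$.

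Substituting $P(\Gamma^1(X)) = (1+q+q^2) P(F_1(X))$ and $P(\Gamma^1_2(X)) = \tfrac{1-q^{n-2}}{1-q} P(F_1(X))$, the elementary identity
\[
(1+q+q^2)(1-q^{n-3}) - (1+q)(1-q^{n-2}) = q^2 - q^{n-3}
\]
collapses the displayed expression to
\[
P(\bS(X)) - P(\bM(X)) = \frac{q^3 - q^{n-2}}{1-q} \cdot P(F_1(Gr(k,n))).
\]
I would then insert $P(F_1(Gr(k,n)))$ from Lemma \ref{polem}(1) and $P(\bM(Gr(k,n),2))$ from Theorem \ref{thm40}(1), and simplify. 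The main bookkeeping task is to rewrite $P(F_1(Gr(k,n)))$ in the compact form $\prod_{i=n-k}^n (1-q^i) / \bigl[(1-q)(1-q^2) \prod_{i=1}^{k-1}(1-q^i)\bigr]$ via cancellations between the two Grassmannian factors, and to invoke the $k \leftrightarrow n-k$ symmetry so that $P(\bM)$ is put over the matching denominator $(1-q)^2(1-q^2)^2 \prod_{i=1}^{k-1}(1-q^i)$; the numerators then combine into the target expression $[(1+q^n)(1+q^3)-q(1+q)(q^k+q^{n-k})] + (1-q^2)(q^3-q^{n-2})$. I do not anticipate any serious obstacle beyond this routine algebraic verification.
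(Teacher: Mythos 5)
Your proposal is correct and takes essentially the same route as the paper: apply the blow-up formula of \cite[p.~605]{GriffithsHarris} to both sides of the diagram in Theorem \ref{thm1}, using that $\Gamma^1(X)$ is a $\PP^2$-bundle and $\Gamma^1_2(X)$ a $\PP^{n-3}$-bundle over $F_1(Gr(k,n))$ (with codimensions $n-2$ and $3$), so that $P(\bS)-P(\bM)=P(F_1(Gr(k,n)))\bigl(P(\PP^{n-3})-P(\PP^2)\bigr)=\frac{q^3-q^{n-2}}{1-q}P(F_1(Gr(k,n)))$, and then substitute Lemma \ref{polem}(1) and Theorem \ref{thm40}(1). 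The remaining simplification, including rewriting $P(F_1)$ and $P(\bM)$ over the common denominator $(1-q)^2(1-q^2)^2\prod_{i=1}^{k-1}(1-q^i)$, is exactly the algebra carried out in the paper.
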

\begin{proof} By Theorem \ref{thm1}, the blow-up of $\bM(Gr(k,n),2)$ along a $\bM(\PP^1,2)$-bundle over $F_1(Gr(k,n))$ coincides with the blow-up of $\bS(Gr(k,n),2)$ along a $\PP(\Ext^1(\cO_L,\cO_L(-1)))$-bundle over $F_1(Gr(k,n))$.
By Lemma \ref{polem} and Theorem \ref{thm40}, we obtain
$$P(\bS(Gr(k,n),2))=P(\bM(Gr(k,n),2))+P(Gr(k-1,k+1))P(Gr(k+1,n))P(\PP^2)(P(\PP^{n-3})-1)$$
$$-P(Gr(k-1,k+1))P(Gr(k+1,n))P(\PP^{n-3})(P(\PP^{2})-1)$$
$$=P(\bM(Gr(k,n),2))+P(Gr(k-1,k+1))P(Gr(k+1,n))(P(\PP^{n-3})-P(\PP^{2}))$$
$$=\frac{((1+q^n)(1+q^3)-q(1+q)(q^k+q^{n-k}))\prod_{i=k}^{n} (1-q^i)}{(1-q)^2(1-q^2)^2 \prod_{i=1}^{n-k-1} (1-q^i)}$$
$$+\frac{(1-q^{k+1})}{(1-q)}\cdot \frac{(1-q^k)}{(1-q^2)}\cdot \frac{\prod_{i=n-k}^{n} (1-q^i)}{\prod_{i=1}^{k+1} (1-q^i)}\cdot(\frac{1-q^{n-2}}{1-q}-\frac{1-q^3}{1-q})$$
\small
$$=\frac{\prod_{i=n-k}^{n} (1-q^i)}{\prod_{i=1}^{k-1} (1-q^i)}\cdot \frac{1}{(1-q)^2 (1-q^2)^2} \cdot ((1+q^n)(1+q^3)-q(1+q)(q^k+q^{n-k})+(1-q^2)(q^3-q^{n-2})).$$
\normalsize
\end{proof}

\subsection{d=3 case}
Theorems \ref{thm2} and \ref{305} enable us to calculate the Poincar\'e polynomials of $\bS(Gr(k,n),3)$ and $\bH(Gr(k,n),3)$ as follows.
\begin{coro}\label{thm42}
\begin{enumerate}
  \item The Poincar\'e polynomial of $\bS(Gr(k,n),3)$ is
\small
$$\{ \frac{F_1(q) (1+q^{2n})+(1+q)^2(F_2(q) q^n (1+q^2)-F_3(q) q (1+q^n )(q^k +q^{n-k}))+F_4(q) q^2 (q^{2k} +q^{2n-2k})}{(1-q)(1-q^2)^2 (1-q^3)^2}$$
$$ +  (1+q+2q^2 +q^3 + q^4) (\frac{1-q^{2n-4}}{1-q}-1) $$
$$+ \frac{1-q^{2}}{1-q}(\frac{(1-q^{n-k})(1-q^k)}{(1-q)^2}+ \frac{1-q^{n-2}}{1-q}-1 ) (1+q+q^2)(\frac{1-q^{n-1}}{1-q}-1)$$
$$ + \frac{1-q^{n-2}}{1-q} ((1+q)(1+q+2q^2 +q^3 + q^4)+q (1+q)(1+q+q^2) ) (\frac{1-q^{n-2}}{1-q}-1) $$
$$-\frac{1-q^{2}}{1-q} [ \frac{1-q^{n-1}}{1-q}(\frac{(1-q^{n-k})(1-q^k)}{(1-q)^2}+ \frac{1-q^{n-2}}{1-q}-1 )+\frac{1-q^{2}}{1-q} \frac{1-q^{n-2}}{1-q}(\frac{1-q^{n-2}}{1-q}-1)](\frac{1-q^{3}}{1-q} -1)$$
$$ - \frac{1-q^{2}}{1-q} \frac{1-q^{n-2}}{1-q}\frac{1-q^{n-2}}{1-q}(\frac{1-q^{5}}{1-q} -1)  $$
$$ - \frac{1-q^{n-2}}{1-q}\frac{1-q^{n-3}}{1-q^2}(\frac{1-q^{8}}{1-q}- 1)\}\cdot
\prod_{i=1}^{k+1} \frac{1-q^{n-i+1}}{1-q^i}\cdot \prod_{i=1}^{k-1}
\frac{1-q^{k-i+2}}{1-q^i}.$$
\normalsize
\item The Poincar\'e polynomial of $\bH(Gr(k,n),3)$ is
$$P(\bS(Gr(k,n),3))$$
\small
$$+(1+2q+3q^2+3q^3+3q^4+3q^5+3q^6+2q^7+q^8)\cdot \{\prod_{i=1}^{k+1} \frac{1-q^{n-i+1}}{1-q^i} \cdot \prod_{i=1}^{k-2} \frac{1-q^{k-i+2}}{1-q^i}\cdot(\frac{1-q^{2n-k-4}}{1-q}-1)$$
$$+ \prod_{i=1}^{k+2} \frac{1-q^{n-i+1}}{1-q^i} \cdot\prod_{i=1}^{k-1} \frac{1-q^{k-i+3}}{1-q^i}\cdot(\frac{1-q^{n+k-4}}{1-q}-1)\}.$$
\normalsize
\end{enumerate}
\end{coro}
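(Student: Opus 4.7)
The plan is to derive both parts of Corollary \ref{thm42} from Theorems \ref{thm2} and \ref{305} by iterated application of the Griffiths--Harris blow-up formula
$$
P(\tilde Y) \;=\; P(Y) \,+\, P(Z)\cdot \bigl(P(\PP^{c-1})-1\bigr)
$$
for a smooth blow-up $\tilde Y \to Y$ along a smooth center $Z$ of codimension $c$. For part (1), I would begin with L\'opez-Mart\'in's formula for $P(\bM(Gr(k,n),3))$ from Theorem \ref{thm40}(2) and propagate it along the six-step chain
$$
\bM \to \bM_1 \to \bM_2 \to \bM_3 \to \bM_4 \to \bM_5 \to \bM_6 \cong \bS
$$
of Theorem \ref{thm2}, applying the formula in the forward direction for the three blow-ups and in the inverse direction for the three blow-downs. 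Part (2) is then one further application of the formula, using Theorem \ref{305}.

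The inputs needed are the Poincar\'e polynomials and codimensions of the six successive centers $\Gamma_0^1(X)$, $\Gamma_1^2(X)$, $\Gamma_2^3(X)$, $\Gamma_3^2(X)$, $\Gamma_4^3(X)$, $\Gamma_5^1(X)$. These are read off from the explicit bundle descriptions in \S\ref{sec3.1}: $\Gamma_0^1(X)$ is an $\bM(\PP^1,3)$-bundle over $F_1(X)$, so that $P(\Gamma_0^1(X)) = P(\bM(\PP^1,3))\cdot P(F_1(X))$ with $P(\bM(\PP^1,3))=1+q+2q^2+q^3+q^4$ (deducible from the GIT presentation $\PP(\Sym^3\CC^2\otimes\CC^2)^s/SL(2)$, or by specializing L\'opez-Mart\'in's formula to $X=\PP^1$); $\Gamma_0^2(X)$ fits into the tower $\Theta_0^2(X)\to \bB(X)\to F_1(X)$ with evaluation fiber $F_1(X,x)$ given by Lemma \ref{polem}(3) and intermediate fiber the stable quotient $[\PP^1\times\PP(\Sym^2\CC^2\otimes\CC^2)]^s/SL(2)$; the exceptional divisor $\Gamma_1^1(X)$ and its sublocus $\Gamma_1^3(X)$ are projective bundles over $\Gamma_0^1(X)$ of ranks controlled by $t=\dim\Ext^1_X(\cO_L,\cO_L(-1))=n-2$ (Proposition \ref{pro31}, Remark \ref{rem4.3}, Lemma \ref{polem}(3)). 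A dimension count in the spirit of Remark \ref{rem3.5} using $\int_\beta c_1(T_{Gr(k,n)})=3n$ pins down the codimensions; for instance the codimension of $\Gamma_0^1(X)$ in $\bM(X,3)$ equals $2n-4$, accounting for the factor $(1-q^{2n-4})/(1-q)-1$ in the stated formula, and the remaining codimensions $3$, $5$, $n-1$, $n-2$, $8$ appearing in the factors $(1-q^c)/(1-q)-1$ are identified analogously with the help of the local GIT-variation analysis of the neighborhoods $\widetilde{\cU}_i^j(X)$ imported wholesale from \cite[\S4.4]{CK}. The Poincar\'e polynomials of the three blow-down centers and of each proper transform are computed iteratively by the same formula, using the clean-intersection statements (Corollaries \ref{trans2}, \ref{trans3}) to ensure smoothness and projective-bundle structure at every stage. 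The common factor $P(F_1(Gr(k,n))) = \prod_{i=1}^{k+1}\frac{1-q^{n-i+1}}{1-q^i} \cdot \prod_{i=1}^{k-1}\frac{1-q^{k-i+2}}{1-q^i}$ from Lemma \ref{polem}(1) is inherited because every center is a bundle over $F_1(X)$.

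For part (2), by Corollary \ref{348}(1) the center $\Delta(Gr(k,n))$ is an $\bS(\PP^2,3)$-bundle over $F_2(Gr(k,n))$, where the fiber polynomial $P(\bS(\PP^2,3)) = 1+2q+3q^2+3q^3+3q^4+3q^5+3q^6+2q^7+q^8$ is already implicit in the computation of $P(\bH(\PP^r,3))-P(\bS(\PP^r,3))$ carried out in \cite{CK}. By Lemma \ref{polem}(2), $F_2(Gr(k,n))$ is a disjoint union of a $Gr(k-2,k+1)$-bundle over $Gr(k+1,n)$ and a $Gr(k-1,k+2)$-bundle over $Gr(k+2,n)$, whose Poincar\'e polynomials are products of standard Grassmannian factors; a codimension count on each component (using $\dim \bS(X,3)=\dim \bM(X,3)$) gives codimensions $2n-k-4$ and $n+k-4$, yielding the two summands in the extra term.

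The main obstacle is the combinatorial bookkeeping in part (1): each intermediate center is a proper transform, and its Poincar\'e polynomial depends on how it meets the exceptional divisors produced by earlier blow-ups. That the proper transforms remain smooth and that the relevant intersections stay clean is guaranteed by the GIT framework developed in \cite{CK} together with the clean-intersection lemmas Corollaries \ref{trans2} and \ref{trans3}, so in principle the calculation is mechanical; the real work lies in organizing the six contributions so that they collapse into the compact stated form after factoring out $P(F_1(Gr(k,n)))$.
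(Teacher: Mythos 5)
Your plan is essentially the paper's own proof: both parts are obtained by pushing L\'opez-Mart\'in's formula through the three blow-ups and three (weighted) blow-downs of Theorem \ref{thm2} via the Griffiths--Harris formula, with the centers' Poincar\'e polynomials read off from their bundle structures over $F_1(X)$ (using $t=n-2$, codimension $2n-4$ for $\Gamma^1_0$, fibers $\PP^2_{(1,2,2)}$, $\PP^4_{(1,2,2,3,3)}$, $\PP^7$ for the blow-downs), and part (2) by one more application over the two components of $F_2(Gr(k,n))$ with codimensions $2n-k-4$ and $n+k-4$, exactly as in the paper. The only cosmetic difference is that you take $P(\bS(\PP^2,3))=1+2q+3q^2+3q^3+3q^4+3q^5+3q^6+2q^7+q^8$ from \cite{CK}, whereas the paper obtains it by specializing part (1) to $Gr(1,3)$; both give the same value.
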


\begin{proof}
(1) By Theorem \ref{thm2} and the blow-up formula of cohomology groups, we have
$$P(\bS(Gr(k,n),3))=P(\bM(Gr(k,n),3)) + P(F_1(X)) P(\bM(\PP^1,3)) (P(\PP^{2n-5})-1) $$
$$+ P(X) P( bl_{\Delta } (\mathrm{ev}^{-1}(x) \times \mathrm{ev}^{-1}(x)) ) P(\bM(\PP^1,2)) (P(\PP^{n-2})-1)$$
$$ +P(F_1(X)) P(\PP^{n-3}) ((1+q)(1+q+2q^2 +q^3 + q^4)+q (1+q)(1+q+q^2) ) (P(\PP^{n-3})-1) $$
$$ -  P(X) [ P( bl_{\Delta } (\mathrm{ev}^{-1}(x) \times \mathrm{ev}^{-1}(x)) ) P(\PP^{n-2})+P(\PP^1)P(\mathrm{ev}^{-1}(x))P(\PP^{n-3})(P(\PP^{n-3})-1)](P(\PP^2_{(1,2,2)}) -1) $$
$$ - P(F_1(X)) P(\PP^1 \times \PP^{n-3}) P(\PP^{n-3})(P(\PP^4_{(1,2,2,3,3)}) -1)  $$
$$ - P(F_1(X)) P(Gr(2,n-2))(P(\PP^7 )- 1).$$
Then (1) immediately follows from this.

(2) By Theorem \ref{305} and Lemma \ref{polem},
$$P(\bH(Gr(k,n),3))=P(\bS(Gr(k,n),3))$$
$$+ P(Gr(k+1,n))\cdot P(Gr(k-2,k+1))\cdot P(\bS(Gr(1,3),3))\cdot(\frac{1-q^{2n-k-4}}{1-q}-1)$$
$$+P(Gr(k+2,n))\cdot P(Gr(k-1,k+2))\cdot P(\bS(Gr(1,3),3))\cdot(\frac{1-q^{n+k-4}}{1-q}-1).$$
If we use (1) for $P(\bS(Gr(1,3),3))$, we obtain (2).
\end{proof}

\bibliographystyle{amsplain}

\end{document}